\newcommand{\+}{\nobreakdash-}
\renewcommand{\:}{\colon}
\renewcommand{\.}{\mskip.5\thinmuskip\relax}
\newcommand{\rarrow}{\longrightarrow}
\newcommand{\larrow}{\longleftarrow}
\newcommand{\ot}{\otimes}
\newcommand{\bu}{{\text{\smaller\smaller$\scriptstyle\bullet$}}}
\newcommand{\lrarrow}{\.\relbar\joinrel\relbar\joinrel\rightarrow\.}
\DeclareMathOperator{\Hom}{Hom}
\DeclareMathOperator{\Ext}{Ext}
\DeclareMathOperator{\coker}{coker}
\DeclareMathOperator{\pd}{pd}
\DeclareMathOperator{\id}{id}
\newcommand{\A}{{\mathsf A}}
\newcommand{\B}{{\mathsf B}}
\newcommand{\C}{{\mathsf C}}
\newcommand{\D}{{\mathsf D}}
\newcommand{\E}{{\mathsf E}}
\newcommand{\F}{{\mathsf F}}
\DeclareMathOperator{\Hot}{\mathsf{Hot}}
\DeclareMathOperator{\Add}{\mathsf{Add}}
\DeclareMathOperator{\Prod}{\mathsf{Prod}}
\DeclareMathOperator{\Sets}{\mathsf{Sets}}
\newcommand{\proj}{{\mathsf{proj}}}
\newcommand{\inj}{{\mathsf{inj}}}
\newcommand{\co}{{\mathsf{co}}}
\newcommand{\ctr}{{\mathsf{ctr}}}
\newcommand{\si}{{\mathsf{si}}}
\newcommand{\mx}{{\mathsf{max}}}
\newcommand{\mn}{{\mathsf{min}}}
\newcommand{\sop}{{\mathsf{op}}}
\newcommand{\rop}{{\mathrm{op}}}
\newcommand{\boT}{{\mathbb T}}
\newcommand{\boZ}{{\mathbb Z}}
\newcommand{\boQ}{{\mathbb Q}}
\newcommand{\cC}{{\mathcal C}}
\newcommand{\cL}{{\mathcal L}}
\newcommand{\cM}{{\mathcal M}}
\newcommand{\bcS}{{\boldsymbol{\mathcal S}}}
\newcommand{\modl}{{\operatorname{\mathsf{--mod}}}}
\newcommand{\comodl}{{\operatorname{\mathsf{--comod}}}}
\newcommand{\contra}{{\operatorname{\mathsf{--contra}}}}
\newcommand{\simodl}{{\operatorname{\mathsf{--simod}}}}
\newcommand{\sicntr}{{\operatorname{\mathsf{--sicntr}}}}
\newcommand{\dproj}{{\mathsf{-proj}}}
\newcommand{\dinj}{{\mathsf{-inj}}}
\theoremstyle{plain}
\newtheorem{thm}{Theorem}[section]
\newtheorem{lem}[thm]{Lemma}
\newtheorem{prop}[thm]{Proposition}
\newtheorem{cor}[thm]{Corollary}
\theoremstyle{definition}
\newtheorem{ex}[thm]{Example}
\newtheorem{rem}[thm]{Remark}
\newcommand{\Section}[1]{\bigskip\section{#1}\medskip}
\begin{document}

\title{$\infty$\+Tilting Theory}

\author{Leonid Positselski and Jan \v S\v tov\'\i\v cek}

\address{Leonid Positselski, Institute of Mathematics, Czech Academy
of Sciences, \v Zitn\'a~25, 115~67 Prague~1, Czech Republic; and
\newline\indent Laboratory of Algebraic Geometry, National Research
University Higher School of Economics, Moscow 119048; and
\newline\indent Sector of Algebra and Number Theory, Institute for
Information Transmission Problems, Moscow 127051, Russia; and
\newline\indent Department of Mathematics, Faculty of Natural Sciences,
University of Haifa, Mount Carmel, Haifa 31905, Israel}

\email{posic@mccme.ru}

\address{Jan {\v S}{\v{t}}ov{\'{\i}}{\v{c}}ek, Charles University
in Prague, Faculty of Mathematics and Physics, Department of Algebra,
Sokolovsk\'a 83, 186 75 Praha, Czech Republic}

\email{stovicek@karlin.mff.cuni.cz}

\begin{abstract}
 We define the notion of an infinitely generated tilting object of
infinite homological dimension in an abelian category.
 A one-to-one correspondence between $\infty$\+tilting objects in
complete, cocomplete abelian categories with an injective cogenerator
and $\infty$\+cotilting objects in complete, cocomplete abelian
categories with a projective generator is constructed.
 We also introduce $\infty$\+tilting pairs, consisting of
an $\infty$\+tilting object and its $\infty$\+tilting class, and obtain
a bijective correspondence between $\infty$\+tilting and
$\infty$\+cotilting pairs.
 Finally, we discuss the related derived equivalences and t\+structures.
\end{abstract}

\maketitle

\tableofcontents

\section*{Introduction}
\medskip


The phrase `tilting theory' is often used to refer to a well-developed general machinery for producing equivalences between triangulated categories (see \cite{Handbook} for an introduction, history and applications). Such equivalences are often represented by a distinguished object, a so-called tilting object, and it is crucial to most of the theory that such a tilting object is homologically small. If $\A$ is an abelian category with exact coproducts (e.g.\ a category of modules over a ring or sheaves on a topological space) and $T$ is a tilting object, the smallness typically translates at least to the assumptions that $T$ is finitely generated and of finite projective dimension.

In this paper we introduce and systematically develop $\infty$-tilting theory, where all homological smallness assumptions are dropped. This brings under one roof various concepts and results from the literature:
\begin{enumerate}
\item Wakamatsu tilting modules~\cite{MR,Wak1,Wak2} over finite dimensional algebras,
\item semidualizing bimodules and the Foxby equivalence~\cite{Ch,HW,Pps}, and
\item the comodule-contramodule~\cite[Section 0.2 and Chap.\ 5]{Psemi} and the semimodule-semicontramodule~\cite[Sections~0.3.7
and~6.3]{Psemi} correspondences.
\end{enumerate}

These results come with rather different motivations: from criteria for stable equivalences of finite dimensional self-injective algebras in~(1), through Gorenstein homological algebra in~(2), to the representation theory of infinite-dimensional Lie algebras (e.g.\ the Virasoro or Kac--Moody algebras) in~(3).

A part of the work has been done in our previous paper~\cite{PS}, where we explained how the finite generation assumption can be naturally dropped with help of additive monads and, in several cases of interest, with topological rings.

In this paper we focus on dropping the assumption of finite homological dimension. It turns out that we still obtain triangulated equivalences and (co)tilting t\+structures, but in general not for the conventional derived categories of two abelian categories, but rather for a so-called \emph{pseudo-coderived} category of one of them and a \emph{pseudo-contraderived} category of the other.

Here, a pseudo-coderived category of an abelian category $\A$ is a certain triangulated category $\D$ to which $\A$ fully embeds as the heart of a t\+structure and such that $\Ext^i_\A(X,Y)$ is canonically isomorphic to $\Hom_\D(X,Y[i])$ for all $X,Y\in\A$ and $i\ge 0$. The term `pseudo-coderived' comes from the fact that, under reasonable assumptions satisfied in particular in the situations~(1--3) above, the pseudo-coderived category is an intermediate Verdier quotient between the conventional derived category $\D(\A)$ and the coderived category $\D^\co(\A)$ (which is none other than the homotopy category $\Hot(\A_\inj)$ of complexes of injective objects if $\A$ is a locally Noetherian Grothendieck category). A pseudo-contraderived category has formally dual properties. Pseudo-co/contraderived categories are in fact not determined uniquely by their abelian hearts, but depend on a certain parameter, so that we often do not get just a single triangulated equivalence, but rather a family of compatible triangulated equivalences. We refer to \cite{Pps} for an in-depth discussion of this new class of triangulated categories.

\medskip


 To put our results into context, we briefly recall the history 
of tilting theory, which evolved through a series of successive generalizations
in several directions.
 The definition of what is now known as a finitely generated tilting
module of projective dimension~$1$ over a finite-dimensional
associative algebra first appeared in the paper of Happel and
Ringel~\cite{HR} (see also Bongartz~\cite{Bo}), who were building
upon a previous work of Brenner and Butler~\cite{BB}.
 The main result was the so-called \emph{Tilting Theorem}, or
the \emph{Brenner--Butler theorem}, establishing equivalences
between certain additive subcategories of the categories of
finitely-generated modules over an algebra $R$ and over
the endomorphism algebra $S$ of a tilting $R$\+module.
 Happel~\cite{Ha} proved that a tilting module
induces a triangulated equivalence between the derived categories
of finitely-generated $R$\+modules and $S$\+modules.

 Finitely presented tilting modules of projective dimension~$1$
over arbitrary rings were discussed by Colby and Fuller~\cite{CF},
while finitely presented tilting modules of arbitrary finite projective
dimension~$n$ were studied already by Miyashita~\cite{Mi} and
Cline--Parshall--Scott~\cite{CPS}.
 The tilting theorem (for categories of infinitely generated modules)
was proved in~\cite{Mi}, and the related derived equivalence was
constructed in~\cite{CPS}.
 Infinitely generated tilting modules of projective dimension~$1$
(now also known as \emph{big $1$\+tilting modules}) were defined
by Colpi and Trlifaj~\cite{CT}.
 The tilting theorem for self-small tilting objects of projective
dimension~$1$ in Grothendieck abelian categories was obtained
by Colpi~\cite{Co}.
 Cotilting modules of injective dimension~$1$ were introduced
by Colby--Fuller~\cite{CF} and Colpi--D'Este--Tonolo~\cite{CDT}
(see also~\cite{CTT}).
 Finally, infinitely generated tilting modules of projective
dimension~$n$ and cotilting modules of injective dimension~$n$
(\emph{big $n$\+tilting} and \emph{$n$\+cotilting modules})
were defined by Angeleri and Coelho~\cite{AC} and
characterized by Bazzoni~\cite{Ba}.

 The main results of the infinitely generated tilting theory claim
that all $n$\+tilting modules are of finite type~\cite{BS} and
all $n$\+cotilting modules are pure-injective~\cite{St0}.
 The tilting theorem for big $1$\+tilting modules was obtained
in some form by Gregorio and Tonolo~\cite{GT}.
 Another approach, based on a previous work by Facchini, was
developed by Bazzoni~\cite{Ba2}, who also proved that
the derived category of $R$\+modules is equivalent to a full
subcategory and a quotient category of the derived category of
$S$\+modules when $S$ is the endomorphism ring of a big
$1$\+tilting $R$\+module.
 This was extended to big $n$\+tilting modules by
Bazzoni, Mantese, and Tonolo~\cite{BMT}.
 A correspondence between $n$\+cotilting modules and small $n$\+tilting
objects in Grothendieck abelian categories together with the related
derived equivalence were constructed by the second author of the present
paper~\cite{St}.
 Big $n$\+tilting objects in abelian categories were defined and
the related derived equivalence was obtained by
Nicol\'as--Saor\'\i n--Zvonareva~\cite{NSZ} and
Fiorot--Mattiello--Saor\'\i n~\cite{FMS} (see also
Psaroudakis--Vit\'oria~\cite{PV}).
 Finally, a correspondence between big $n$\+tilting and $n$\+cotilting
objects in abelian categories was constructed in the paper~\cite{PS}
by the two present authors.

 The main innovation in~\cite{PS}, which allows to obtain very naturally
derived equivalences from big $n$-tilting objects and which is
based on the ideas previously
developed in~\cite{St}, \cite{PV}, \cite{NSZ}, and~\cite{PR}, is that
to a big tilting object $T$ in an abelian category $\A$ one can assign
a richer structure than its ring of endomorphisms $\Hom_\A(T,T)$.
 For any set $X$, consider the set of all morphisms $T\rarrow T^{(X)}$
in $\A$, where $T^{(X)}$ denotes the coproduct of $X$ copies of~$T$.
 Then the endofunctor $X\longmapsto\Hom_\A(T,T^{(X)})$ is a monad on
the category of sets.
 The tilting heart $\B$ corresponding to the tilting object $T\in\A$
is the abelian category of all algebras (which we also call modules)
over this monad. In many naturally occurring situations, one can equip
$\Hom_\A(T,T)$ with a complete and separated topology so that
$\Hom_\A(T,T^{(X)})$ identifies with families of elements of $\Hom_\A(T,T)$
indexed by $X$ which converge to zero.


 A notion of a finitely generated tilting module of infinite
projective dimension (now known as \emph{Wakamatsu tilting modules}) was
introduced in the representation theory of finite-dimensional algebras 
by Wakamatsu in~\cite{Wak1,Wak2} and it was studied further
by Mantese--Reiten in~\cite{MR}.

\medskip


 In the present paper we work out a common generalization
of two lines of thought described above, namely of
big $n$-tilting/cotilting modules
and finite-dimensional Wakamatsu tilting modules.
We develop a theory
of big tilting and cotilting objects of possibly infinite homological
dimension in abelian categories.
 Our goal is also to put on a rigorous footing
the discussion of ``$\infty$\+tilting objects''
in~\cite[Sections~10.1, 10.2, and~10.3]{PS}.
The structure of the paper is as follows.

To a complete, cocomplete
abelian category $\A$ with an injective cogenerator $J$ and an $\infty$-tilting
object $T$ we associate in Section~\ref{infty-tilting-cotilting-secn} a complete,
cocomplete abelian category $\B$ with a projective generator $P$ and
an $\infty$-cotilting object $W$. We do so in such a way that, up to equivalence,
this induces a bijective correspondence
between the triples $(\A,T,J)$ and $(\B,P,W)$.


 In order to obtain the announced version of derived equivalences, we need
to associate to each $\infty$\+tilting object a certain coresolving subcategory
$\E\subset\A$ which plays the role of the tilting class in~\cite{PS}.
This is discussed in Section~\ref{infty-til-cotil-pairs-secn}.
Such a class $\E$ is in general not unique, but the possible choices form
a complete lattice with respect to the inclusion.
 Having chosen $\E$, we already
obtain a uniquely determined full subcategory $\F\subset\B$
which plays the role of a cotilting class, and equivalences
$\E\simeq\F$ and $\D(\E)\simeq\D(\F)$.
Each of $\D(\E)$ and $\D(\F)$ comes naturally equipped with two t\+structures,
and the two abelian categories $\A$ and $\B$ are the hearts of these
two t\+structures (see Section~\ref{t-structures-secn}).

 If, moreover, the $\infty$\+tilting class $\E$ is closed under coproducts in $\A$
and the $\infty$\+cotilting class $\F$ is closed under products in $\B$,
then we show in Section~\ref{derived-equivalence-secn}
that $\D(\E)$ is a pseudo-coderived category and $\D(\F)$ is
a pseudo-contraderived category in the sense of~\cite{Pps}.
 Although the above closure properties of $\E$ and $\F$ are not automatic
in our setup, they
are satisfied for our motivating classes of examples mentioned above and,
in details, also in Section~\ref{examples-secn}.

\medskip

\textbf{Acknowledgement.}
 The authors are grateful to Jan Trlifaj, Sefi Ladkani, and Luisa Fiorot
for helpful discussions and comments.
 Leonid Positselski's research is supported by research plan
RVO:~67985840, by the Israel Science Foundation grant~\#\,446/15, and
by the Grant Agency of the Czech Republic under the grant P201/12/G028.
Jan \v{S}\v{t}ov\'{\i}\v{c}ek's research is supported by
the Grant Agency of the Czech Republic under the grant P201/12/G028.

\Section{The Tilted and Cotilted Abelian Categories}
\label{co-tilted-categories}

 Given an additive category $\C$ with set-indexed coproducts and
an object $M\in\C$, we denote by $\Add(M)\subset\C$ the full
subcategory formed by the direct summands of coproducts of copies
of $M$ in~$\C$.
 Similarly, given an additive category $\C$ with set-indexed
products and an object $L\in\C$, we denote by $\Prod(L)\subset\C$
the full subcategory formed by the direct summands of products of
copies of $L$ in~$\C$.
 Given a set $X$, the coproduct of $X$ copies of $M$ is denoted
by $M^{(X)}\in\Add(M)$ and the product of $X$ copies of $L$ is
denoted by $L^X\in\Prod(L)$.

 We say that an additive category is \emph{idempotent-complete}
(or in other terminology~Karoubian or pseudo-abelian) if it contains the images of
all idempotent endomorphisms of its objects.

\begin{thm} \label{add-prod-theorem}
\textup{(a)} Let\/ $\C$ be an idempotent-complete additive category
with coproducts and $M\in\C$ be an object.
 Then there exists a unique abelian category\/ $\B$ with enough
projective objects such that the full subcategory of projective
objects\/ $\B_\proj\subset\B$ is equivalent to the full subcategory\/
$\Add(M)\subset\C$.
 The abelian category\/ $\B$ has products, coproducts, and a natural
projective generator $P\in\B_\proj$ corresponding to
the object $M\in\Add(M)$. \par
\textup{(b)} Let\/ $\C$ be an idempotent-complete additive category
with products and $L\in\C$ be an object.
 Then there exists a unique abelian category\/ $\A$ with enough
injective objects such that the full subcategory of injective
objects\/ $\A_\inj\subset\A$ is equivalent to the full subcategory\/
$\Prod(L)\subset\C$.
 The abelian category\/ $\A$ has products, coproducts, and a natural
injective cogenerator $J\in\A_\inj$ corresponding to the object
$L\in\Prod(L)$.
\end{thm}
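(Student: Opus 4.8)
My plan is to realize $\B$ as the category of algebras (modules) over the additive monad on $\Sets$ attached to $M$, following the monad philosophy of the introduction, and then to deduce part~(b) from part~(a) by passing to opposite categories. For part~(a) I would set $\boT_M(X)=\Hom_\C(M,M^{(X)})$, with unit $\eta_X\:X\to\boT_M(X)$ sending $x$ to the coproduct injection $M\to M^{(X)}$, and multiplication sending $f\:M\to M^{(Y)}$, where $Y=\boT_M(X)$, to the composite $M\xrightarrow{f} M^{(Y)}\to M^{(X)}$ whose second arrow has $y$-th component the morphism $y\:M\to M^{(X)}$ itself. The candidate is $\B:=\Sets^{\boT_M}$, the Eilenberg--Moore category, and the first task is to see it is abelian. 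Here I would use that $\boT_M$ is \emph{additive}: each $\boT_M(X)$ is an abelian group since $\C$ is preadditive, and the structure maps respect this, so every $\boT_M$-algebra acquires a functorial abelian group structure (with zero $\xi(0)$ and sum $\xi(\eta(-)+\eta(-))$ for the structure map $\xi$), making $\B$ an $\mathsf{Ab}$-category with biproducts. This additive-monad formalism is the one developed in~\cite{PS}, which I would invoke.

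Next I would pin down limits, colimits, and projectives. Limits, in particular products, are created by the forgetful functor $U\:\B\to\Sets$, so kernels exist as the evident subalgebras. Coequalizers, hence all colimits, exist as well: the congruences on an algebra $B$ that coequalize a given pair are controlled by equivalence relations on the underlying set $UB$ and so form a set, and one intersects those containing the generating relation; in particular cokernels exist and $\B$ is complete and cocomplete. The free--forgetful adjunction then yields
\[
\Hom_\B\bigl(\boT_M X,\boT_M Y\bigr)\;\cong\;\Hom_\Sets\bigl(X,\Hom_\C(M,M^{(Y)})\bigr)\;\cong\;\Hom_\C\bigl(M^{(X)},M^{(Y)}\bigr),
\]
so the free algebras form a full subcategory equivalent, via $\Hom_\C(M,-)$, to the full subcategory of $\C$ on the objects $M^{(X)}$. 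Since $\boT_M X=P^{(X)}$ for $P:=\boT_M(\{\ast\})$, the object $P$ is a projective generator and the counit $\boT_M(UB)\to B$ exhibits enough projectives. As the projectives of an abelian category with enough projectives are exactly the retracts of coproducts of $P$, and $\C$ is idempotent-complete, $\B_\proj$ is the idempotent completion of $\{M^{(X)}\}$ inside $\C$, which is precisely $\Add(M)$.

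For uniqueness I would show that any abelian $\B'$ with enough projectives and $\B'_\proj\simeq\Add(M)$ is recovered from its projectives. Writing $P'$ for the image of $M$ and $\boT'=\Hom_{\B'}(P',(P')^{(-)})$, the monad $\boT'$ coincides with $\boT_M$ under $\Add(M)\simeq\B'_\proj$, and $\Hom_{\B'}(P',-)\:\B'\to\Sets^{\boT'}=\Sets^{\boT_M}$ is an equivalence because $P'$ is a projective generator with $\B'_\proj=\Add(P')$ closed under the coproducts $(P')^{(X)}$; equivalently, $B'\mapsto\Hom_{\B'}(-,B')|_{\Add(M)}$ identifies $\B'$ with the coherent additive functors $\Add(M)^\rop\to\mathsf{Ab}$, a category depending only on $\Add(M)$. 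Part~(b) then follows by applying part~(a) to the idempotent-complete additive category $\C^\rop$ with coproducts and the object $L$: this gives an abelian $\B''$ with $\B''_\proj\simeq\Add_{\C^\rop}(L)=\bigl(\Prod_\C(L)\bigr)^\rop$, and $\A:=(\B'')^\rop$ is abelian with enough injectives, $\A_\inj\simeq\Prod_\C(L)$, inheriting products, coproducts, and an injective cogenerator $J$ dual to the projective generator of $\B''$.

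I expect the main obstacle to be establishing that $\B$ is genuinely \emph{abelian}, rather than merely a complete and cocomplete category of infinitary algebras: one must check that every monomorphism is a kernel and every epimorphism a cokernel, which rests on the full additive-monad machinery and is where I would lean most heavily on~\cite{PS}. A secondary technical point is the exact identification $\B_\proj\simeq\Add(M)$, where idempotent-completeness of $\C$ is indispensable to match retracts of free algebras with direct summands of the $M^{(X)}$; without it one would only obtain $\B_\proj$ as the idempotent completion of $\Add(M)$.
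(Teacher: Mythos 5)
Your proposal is correct, but it takes a genuinely different route from the paper's own proof. The paper constructs $\B$ primarily as the category of finitely presented (coherent) contravariant functors on $\Add(M)$, in the style of Freyd; the entire burden of abelianness is then carried by one concrete, setting-specific observation, namely that $\Add(M)$ has weak kernels (if $f\:M'\rarrow M''$ is a morphism and $X$ is the set of all morphisms $M\rarrow M'$ whose composition with~$f$ vanishes, then $M^{(X)}\rarrow M'$ is a weak kernel), after which the classical result of Freyd et al.\ \cite{Fr,Kra0,Bel,Kra} applies. The Eilenberg--Moore description over $\boT\:X\longmapsto\Hom_\C(M,M^{(X)})$ appears in the paper's proof only as a supplementary description, used to obtain products and the projective generator~$P$. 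You invert this: you make the monad description primary and derive abelianness from the additivity of $\boT$ together with the theorem that modules over an additive monad on $\Sets$ form an abelian category, which you outsource to \cite{PS} (the precise statement is \cite[Lemma~1.1]{Pper}, also cited by the paper; it amounts to Barr-exactness of monadic categories over $\Sets$ plus additivity). This is a legitimate citation, on par with the paper's citation of Freyd, so the "main obstacle" you flag is not a gap in substance. What your route buys is uniformity: the one free--forgetful adjunction yields products, coproducts, the generator, enough projectives, and the identification $\B_\proj\simeq\Add(M)$ (where, as you note, idempotent-completeness of $\C$ is exactly what matches retracts of free modules with objects of $\Add(M)$). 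What the paper's route buys is a more elementary, self-contained verification of the crucial abelianness, with the weak-kernel construction as the only new input. Your uniqueness sketch is essentially the argument behind the paper's citation of \cite{St,Prev}; note that it silently uses the fact that coproducts computed in $\B'_\proj\simeq\Add(M)$ are also coproducts in $\B'$, which needs (and has) a short five-lemma argument on projective presentations --- the paper records precisely this point when constructing coproducts in $\B$ from those in $\B_\proj$. Finally, your deduction of part~(b) from part~(a) by passing to opposite categories is exactly what the paper does.
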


\begin{proof}
 Part~(a): the category $\B$ is unique, because an abelian category
with enough projective objects is determined by its full subcategory
of projective objects~\cite[proof of Theorem~6.2]{St},
\cite[proof of Theorem~3.6]{Prev}.

 To prove existence, one can construct $\B$ as the category of
finitely presented (coherent) contravariant functors on $\Add(M)$.
 This category can be also described as the quotient category of
the category $\Add(M)^2$ of morphisms in $\Add(M)$ by the ideal
of all morphisms in $\Add(M)^2$ which factorize through objects
of the full subcategory in $\Add(M)^2$ consisting of all the split
epimorphisms in $\Add(M)$ \cite{Bel}.
 The category $\B$ is abelian, because the additive category $\Add(M)$
is right coherent (has weak kernels) \cite[Corollary~1.5]{Fr},
\cite[Lemma~2.2 and Proposition~2.3]{Kra0},
\cite[Proposition~4.5(1)]{Bel}, \cite[Lemma~1(1)]{Kra} (see
also~\cite[Appendix~B]{Fio} for a further discussion and references).
 Indeed, if $f\:M'\rarrow M''$ is a morphism in $\Add(M)$ and
$X$ is the set of all morphisms $M\rarrow M'$ whose composition
with~$f$ vanishes, then the natural morphism $M^{(X)}\rarrow
M'$ is a weak kernel of~$f$ in $\Add(M)$ (cf.~\cite[Lemma~2(1)]{Kra}).

 Even more explicitly, $\B$ is the category of modules over
the monad $\boT\:X\longmapsto\Hom_\C(M,M^{(X)})$ on the category
of sets (we call modules here what is often called monadic $\boT$-algebras,
since they generalize ordinary modules over a ring; see the discussions
in the introduction to~\cite{PR}, \cite[Lemma~1.1 and Example~1.2(2)]{Pper},
and~\cite[Sections~6.1 and~6.3]{PS}, and the references therein).

 Coproducts in the category of coherent functors exist
by~\cite[Lemma~1(2)]{Kra}; more generally, whenever the category
of projective objects $\B_\proj$ in an abelian category $\B$ with
enough projective objects has coproducts, the coproducts in $\B$
can be constructed in terms of the coproducts in $\B_\proj$ (and
the embedding functor $\B_\proj\rarrow\B$ preserves coproducts).
 Products exist in the category of algebras over every monad
$\boT\:\Sets\rarrow\Sets$ and are preserved by the forgetful
functor from the category of $\boT$\+algebras to $\Sets$ (coproducts
also exist in the category of $\boT$\+algebras, but are not preserved
by the forgetful functor).
 The natural projective generator $P\in\B_\proj$ is the free
$\boT$\+algebra/module with one generator.

 Part~(b) is dual to~(a).  Explicitly, $\A$ is the opposite category to
the category of coherent covariant functors on $\Prod(L)$, or
the opposite category to the category of modules over the monad
$\boT\:X\longmapsto\Hom_\C(L^X,L)$ on the category of sets.
\end{proof}

 We will use the notation $\B=\sigma_M(\C)$ and $\A=\pi_L(\C)$.
 Assuming that $M\in\C$ is a ``tilting object'' in one sense or
another (cf.\ the next Section~\ref{infty-tilting-cotilting-secn}),
one can call $\B$ the \emph{abelian category tilted from\/ $\C$ at~$M$}.
 Similarly, assuming that $L\in\C$ is a ``cotilting object'' in
some sense, one can call $\A$ the \emph{abelian category cotilted
from\/ $\C$ at~$L$}.

 Now let us assume that $\C$ is an abelian category.
 Then, in the context of Theorem~\ref{add-prod-theorem}(a), the additive
embedding functor $\Phi_\proj\:\B_\proj\simeq\Add(M)\rarrow\C$ can be
uniquely extended to a right exact functor $\Phi\:\B\rarrow\C$.
 To compute the object $\Phi(B)\in\C$ for a given object $B\in\B$,
one can present $B$ as the cokernel of a morphism of projective objects
$f\:P''\rarrow P'$ in $\B$ and put $\Phi(B)=\coker\Phi_\proj(f)$.

 The additive embedding functor $\Add(M)\simeq\B_\proj\rarrow\B$
can be extended to a (left exact) functor $\Psi\:\C\rarrow\B$
right adjoint to~$\Phi$.
%
 Representing the objects of $\B$ as modules over the monad
$\boT\:X\longmapsto\Hom_\C(M,M^{(X)})$ on the category of sets,
one can compute the functor $\Psi$
as the functor $N\longmapsto\Hom_\C(M,N)$, with the $\boT$\+module
structure on the set $\Hom_\C(M,N)$ constructed as explained
in~\cite[Section~6.3]{PS} (in this case $\Hom_\C(M,N)$ of course carries the structure of an abelian group, even a right $\boT(*)$-module, where $*$ stands for a one-element set).

 Indeed, let us show that the functor $\Phi$ is left adjoint to~$\Psi$.
 First of all, the natural projective generator $P\in\B$ (corresponding
to the object $M\in\Add(M)$) corepresents the forgetful functor
from the category $\B\simeq\boT\modl$ to the category of sets or abelian
groups, that is, for any object $B\in\boT\modl$ one has $\Hom_\B(P,B)
\simeq B$.
 In particular, for any object $N\in\C$ we have a natural isomorphism
of the Hom groups
$$
 \Hom_\B(P,\Psi(N))=\Hom_\B(P,\Hom_\C(M,N))\simeq\Hom_\C(M,N)=
 \Hom_\C(\Phi(P),N).
$$
 Hence for any set $X$ there are natural isomorphisms
$$
 \Hom_\B(P^{(X)},\Psi(N))\simeq\Hom_\C(M,N)^{X}\simeq
 \Hom_\C(M^{(X)},N)\simeq\Hom_\C(\Phi(P^{(X)}),N).
$$
 Passing to the direct summands, we obtain a natural isomorphism
of the Hom groups
$$
 \Hom_\B(P',\Psi(N))\simeq\Hom_\C(\Phi(P'),N)
$$
for all objects $P'\in\B_\proj$ and $N\in\C$.
 This isomorphism is clearly functorial in an object $N\in\C$; and
the construction of the action of the monad $\boT$ on the set
$\Psi(N)=\Hom_\C(M,N)$ in~\cite[proof of Proposition~6.2 and
Remark~6.4]{PS} is designed
so as to make these isomorphisms compatible with all the morphisms
$P''\rarrow P'$ in the category $\B_\proj\simeq\Add(M)$.
 Finally, both the contravariant functors $\Hom_\B({-},\Psi(N))$ and
$\Hom_\C(\Phi({-}),N)$ take the cokernels of morphisms in $\B$ to
the kernels of morphisms of abelian groups, so our isomorphism of
the Hom groups extends from $P'\in\B_\proj$ to all objects $B\in\B$.

 Similarly, in the context of Theorem~\ref{add-prod-theorem}(b),
the additive embedding functor $\Psi_\inj\:\A_\inj\simeq\Prod(L)
\rarrow\C$ can be uniquely extended to a left exact functor
$\Psi\:\A\rarrow\C$.
 The additive embedding functor $\Prod(L)\simeq\A_\inj\rarrow\A$
can be extended to a (right exact) functor $\Phi\:\C\rarrow\A$
left adjoint to~$\Psi$.

 For more explicit descriptions of abelian categories $\B$ arising
in connection with objects $M$ in more specific classes of additive
categories $\C$ in Theorem~\ref{add-prod-theorem}(a), we refer
to~\cite[Theorems~7.1, 9.9, and~9.11, and Proposition~9.1]{PS}.

 The following question will be addressed in the next
Section~\ref{infty-tilting-cotilting-secn}:
given an abelian category $\A$ with coproducts and an object
$M\in\A$, under which assumptions there is an object $L$ in
the abelian category $\B=\sigma_M(\A)$ such that $\pi_L(\B)=\A$\,?
 Similarly, given an abelian category $\B$ with products and an object
$L\in\B$, under which assumptions there is an object $M$ in
the abelian category $\A=\pi_L(\B)$ such that $\sigma_M(\A)=\B$\,?

\Section{\texorpdfstring{$\infty$}{Infinity}-Tilting-Cotilting Correspondence}
\label{infty-tilting-cotilting-secn}

 Let $\A$ be an abelian category with coproducts.
 We will say that an object $T\in\A$ is \emph{weakly tilting} if one has
$$
 \Ext_\A^i(T,T^{(X)})=0 \quad\text{for all sets~$X$
 and all integers $i>0$}.
$$

 Given two objects $T'\in\Add(T)\subset\A$ and $A\in\A$, a morphism
$t\:T'\rarrow A$ is said to be an \emph{$\Add(T)$\+precover} if
every morphism $t''\:T''\rarrow A$ with $T''\in\Add(T)$ factorizes
through the morphism~$t$.
 Equivalently, this means that the map of abelian groups
$\Hom_\A(T,t)\:\Hom_\A(T,T')\rarrow\Hom_\A(T,A)$ is surjective.
 For every object $A\in\A$, the natural morphism
$T^{(\Hom_\A(T,A))}\rarrow A$ is an $\Add(T)$\+precover.

 Let $T\in\A$ be a weakly tilting object.
 By the definition, the full subcategory $\E_\mx(T)\subset\A$ consists
of all the objects $E\in\A$ satisfying the following two conditions:
\begin{enumerate}
\renewcommand{\theenumi}{\roman{enumi}$_\mx$}
\item $\Ext^i_\A(T,E)=0$ for all $i>0$; and
\item there exists an exact sequence
$$
 \dotsb\lrarrow T_2\lrarrow T_1\lrarrow T_0\lrarrow E\lrarrow0
$$
in $\A$ such that $T_j\in\Add(T)$ for all $j\ge0$ and the sequence
remains exact after applying the functor $\Hom_\A(T,{-})$.
\end{enumerate}

 Notice that the condition of exactness of the sequence of
abelian groups obtained by applying $\Hom_\A(T,{-})$ in~(ii$_\mx$)
can be equivalently restated as the condition that the images $Z_j$
of the morphisms $T_{j+1}\rarrow T_j$ satisfy $\Ext_\A^1(T,Z_j)=0$
for all $j\ge0$.
 In this case, assuming~(i$_\mx$), one also has $\Ext_\A^i(T,Z_j)=0$
for all $j\ge0$ and $i>0$.
 As (ii$_\mx$)~is obviously satisfied for $Z_j$, it follows that
$Z_j\in\E_\mx(T)$ for all $j\ge0$.

 Conversely, given a short exact sequence $0\rarrow Z_0\rarrow T_0
\rarrow E\rarrow0$ with $E$ satisfying~(i$_\mx$), $Z_0$
satisfying~(ii$_\mx$), $T_0\in\Add(T)$, and $\Hom_\A(T,T_0)\rarrow
\Hom_\A(T,E)$ a surjective map, one clearly has $E\in\E_\mx(T)$.

 The following lemma is a generalization
of~\cite[Proposition~2.6]{Wak2}.

\begin{lem} \label{wakamatsu-lemma}
 For any weakly tilting object $T\in\A$, the full subcategory\/
$\E_\mx(T)$ in the abelian category\/ $\A$ is closed under \par
\textup{(a)} extensions, \par
\textup{(b)} the cokernels of monomorphisms, \par
\textup{(c)} the kernels of those epimorphisms which remain
epimorphisms after applying the functor\/ $\Hom_\A(T,{-})$, and \par
\textup{(d)} direct summands.
\end{lem}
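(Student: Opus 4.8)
The plan is to verify each of the four closure properties (a)–(d) by exhibiting, for any object $E$ that ought to lie in $\E_\mx(T)$, both the Ext-vanishing condition (i$_\mx$) and the resolution condition (ii$_\mx$). The key technical tool is the reformulation already established in the excerpt: condition (ii$_\mx$) for an object is equivalent to building a short exact sequence $0\rarrow Z_0\rarrow T_0\rarrow E\rarrow 0$ with $T_0\in\Add(T)$, with $Z_0$ again in $\E_\mx(T)$, and with $\Hom_\A(T,T_0)\rarrow\Hom_\A(T,E)$ surjective. I would lean heavily on this ``one-step'' criterion together with the long exact sequence of $\Ext^\bu_\A(T,{-})$ applied to the various short exact sequences that arise, since $T$ weakly tilting forces $\Ext^i_\A(T,E)=0$ to propagate cleanly.

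**First, for (a) and (b),** I would treat extensions and cokernels of monomorphisms essentially in parallel, as both reduce to the horseshoe-type construction. For an extension $0\rarrow E'\rarrow E\rarrow E''\rarrow 0$ with $E',E''\in\E_\mx(T)$, condition (i$_\mx$) for $E$ is immediate from the long exact sequence, since the outer terms have vanishing higher Ext. For (ii$_\mx$), I would take $\Add(T)$-precovers $T_0'\rarrow E'$ and $T_0''\rarrow E''$, form $T_0=T_0'\oplus T_0''$, and build a map $T_0\rarrow E$ lifting the precover of $E''$ through the surjection $E\rarrow E''$ (possible because $\Ext^1_\A(T_0'',E')=0$, which holds as $T_0''\in\Add(T)$ and $E'$ satisfies (i$_\mx$)); the snake lemma then gives a short exact sequence of kernels with the kernel of $T_0\rarrow E$ again an extension of objects in $\E_\mx(T)$, letting me iterate. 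Part (b) is the special case where $E''$ is the cokernel and one recognizes the cokernel of a monomorphism between two objects of $\E_\mx(T)$ as lying in the class by an analogous kernel-chasing argument.

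**For (c),** given an epimorphism $E\rarrow E''$ that stays epic under $\Hom_\A(T,{-})$, with both $E$ and $E''$ in $\E_\mx(T)$, I would put $K=\ker$ and use the long exact $\Ext$ sequence: the hypothesis that $\Hom_\A(T,E)\rarrow\Hom_\A(T,E'')$ is surjective kills the connecting map into $\Ext^1_\A(T,K)$, and the vanishing of $\Ext^i_\A(T,E)$, $\Ext^i_\A(T,E'')$ for $i>0$ then forces $\Ext^i_\A(T,K)=0$ for all $i>0$, giving (i$_\mx$) for $K$. Condition (ii$_\mx$) for $K$ would again follow by the iterative resolution argument, splicing a precover of $E$ against the kernel. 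Part (d) is the formally easiest: $\E_\mx(T)$ is closed under direct summands because both defining conditions are preserved by passing to summands — (i$_\mx$) trivially, and (ii$_\mx$) because an $\Add(T)$-resolution of $E'\oplus E''$ that stays exact under $\Hom_\A(T,{-})$ restricts, after choosing the idempotent splitting, to such a resolution of each summand, using idempotent-completeness of $\Add(T)$.

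**The main obstacle** I anticipate is not any single short exact sequence but the bookkeeping required to make the resolution condition (ii$_\mx$) genuinely iterative: at each stage one must simultaneously maintain membership of the syzygy in $\E_\mx(T)$ \emph{and} the surjectivity of $\Hom_\A(T,{-})$ on the next step, and it is the interplay of these two that the equivalent ``$\Ext^1_\A(T,Z_j)=0$'' reformulation from the excerpt is designed to streamline. I would therefore state and use that reformulation as the central lemma in each part, reducing every case to a single short exact sequence together with the closedness of the syzygy class, rather than constructing infinite resolutions by hand.
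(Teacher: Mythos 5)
Your parts (a) and (b) do follow the paper's argument (horseshoe-style lifting for extensions; for the cokernel, the composed precover plus part (a) applied to the resulting sequence of syzygies), but parts (c) and (d) each contain a genuine gap. In (c), after correctly getting (i$_\mx$) for the kernel $E'$ from the long exact sequence, your plan to obtain (ii$_\mx$) by ``splicing a precover of $E$ against the kernel'' stalls at precisely the point the paper has to work for. Taking the precover $T_0\rarrow E$ supplied by the resolution of $E$ (so with kernel $Z_0\in\E_\mx(T)$) and composing with $E\rarrow E''$ gives a precover $T_0\rarrow E''$ whose kernel $Z_0''$ fits into $0\rarrow Z_0\rarrow Z_0''\rarrow E'\rarrow0$; but this precover of $E''$ is \emph{not} the one coming from $E''$'s own resolution, so you have no information about $Z_0''$, and the iteration cannot start. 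The missing idea is the paper's preliminary claim that the kernel of an \emph{arbitrary} $\Add(T)$\+precover $t'\:T'\rarrow E$ of an object $E\in\E_\mx(T)$ again lies in $\E_\mx(T)$. This is proved by a Schanuel-type pullback argument: form the pullback $S$ of $t'$ and an epic precover $t_0\:T_0\rarrow E$ with kernel $Z_0\in\E_\mx(T)$; then $S\in\E_\mx(T)$ as an extension of $T'$ by $Z_0$ (part (a)); one checks that $\ker t'$ satisfies (i$_\mx$), so $\Ext^1_\A(T_0,\ker t')=0$ and the column $0\rarrow\ker t'\rarrow S\rarrow T_0\rarrow0$ splits, exhibiting $\ker t'$ as the cokernel of a monomorphism between objects of $\E_\mx(T)$, hence in $\E_\mx(T)$ by part (b). Only with this lemma do both $Z_0$ and $Z_0''$ lie in $\E_\mx(T)$, after which part (b) applied to $0\rarrow Z_0\rarrow Z_0''\rarrow E'\rarrow0$ finishes the proof.

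In (d) your argument is not merely incomplete but incorrect: an $\Add(T)$\+resolution of $E=E'\oplus E''$ does not ``restrict'' to resolutions of the summands. The idempotent of $E$ giving the splitting can be lifted along the resolution (using its exactness under $\Hom_\A(T,{-})$), but only to a \emph{homotopy} idempotent, not an honest idempotent endomorphism of the complex, so the complex does not decompose; idempotent-completeness of $\Add(T)$ is of no help at the level of complexes. Concretely, if $T_0\rarrow E$ is a precover, then the kernel of the composition $T_0\rarrow E\rarrow E''$ is an \emph{extension} of $E'$ by the syzygy of $E$, not a direct summand of anything in sight. This is exactly why the paper's proof of (d) is the most elaborate part: it runs the constructions of (a) and (b) simultaneously on the two short exact sequences $0\rarrow E'\rarrow E\rarrow E''\rarrow0$ and $0\rarrow E''\rarrow E\rarrow E'\rarrow0$, interleaving the resulting syzygies so as to produce, telescope-style, resolutions of $E'$ and of $E''$ whose terms are the growing sums $T_0$, \ $T_0\oplus T_1$, \ $T_0\oplus T_1\oplus T_2$, and so on. Some device of this kind (or a proof that the relevant homotopy idempotents split, which would itself require an argument) is unavoidable here, and your proposal does not supply it.
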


\begin{proof}
 To prove parts~(a\+c), consider a short exact sequence
$0\rarrow E'\rarrow E\rarrow E''\rarrow0$ in the abelian category~$\A$.
 Part~(a): clearly, the object $E$ satisfies the condition~(i$_\mx$)
whenever the objects $E'$ and $E''$ do.
 Suppose that $T_0'\rarrow E'$ and $T_0''\rarrow E''$ are epimorphisms
onto the objects $E'$ and $E''$ from objects $T_0'$, $T_0''\in\Add(T)$
that remain epimorphisms after applying the functor $\Hom_\A(T,{-})$.
 Since $\Ext^1_\A(T_0'',E')=0$, the morphism $T_0''\rarrow E''$
can be lifted to a morphism $T_0''\rarrow E$.
 Hence we obtain a morphism from the split short exact sequence
$0\rarrow T_0'\rarrow T_0'\oplus T_0''\rarrow T_0''\rarrow0$ to
the short exact sequence $0\rarrow E'\rarrow E\rarrow E''\rarrow0$.
 Being an epimorphism at the leftmost and rightmost terms, this
morphism of short exact sequences is also an epimorphism at
the middle term.
 The short sequence of kernels $0\rarrow Z'_0\rarrow Z_0\rarrow
Z''_0\rarrow0$ is then also exact, and the vanishing of
$\Ext_\A^i(T,Z'_0)$ and $\Ext_\A^i(T,Z''_0)$ implies the same of
$\Ext_\A^i(T,Z_0)$. We can thus proceed with the construction of a resolution as in~(ii$_\mx$) inductively.

 Part~(b): clearly, the object $E''$ satisfies the condition~(i$_\mx$)
whenever the objects $E'$ and $E$ do.
 Moreover, the epimorphism $E\rarrow E''$ remains an epimorphism
after applying $\Hom_\A(T,{-})$, since $\Ext_\A^1(T,E')=0$.
 Let $T_0\rarrow E$ be an epimorphism onto $E$ from an object
$T_0\in\Add(T)$ that remains an epimorphism after applying
$\Hom_\A(T,{-})$.
 Then the composition $T_0\rarrow E\rarrow E''$ has the same property.
 Let $Z_0$ and $Z''_0$ be the kernels of the epimorphisms
$T_0\rarrow E$ and $T_0\rarrow E''$.
 Then there is a short exact sequence $0\rarrow Z_0\rarrow Z''_0
\rarrow E'\rarrow0$.
 Assuming that $Z_0\in\E_\mx(T)$, one can apply part~(a) in order to
conclude that $Z''_0\in\E_\mx(T)$, hence $E''\in\E_\mx(T)$.

 Part~(c): let us first show that the kernel of every $\Add(T)$-precover
$t'\colon T'\rarrow\nobreak E$ belongs to $\E_\mx(T)$ whenever
$E\in\E_\mx(T)$.
 By the definition, there exists an $\Add(T)$\+precover $t_0\:
T_0\rarrow E$ with the kernel $Z_0$ belonging to $\E_\mx(T)$.
Consider the following pullback diagram.
\[
\xymatrix{
& Z' \ar@{=}[r] \ar@{>->}[d] & Z' \ar@{>->}[d] \\
Z_0 \ar@{>->}[r] \ar@{=}[d] & S \ar@{->>}[r] \ar@{->>}[d] & T' \ar@{->>}[d]^-{t'} \\
Z_0 \ar@{>->}[r] & T_0 \ar@{->>}[r]_-{t_0} & E
}
\]
As $Z_0$ and $T'\in\E_\mx(T)$, we have $S\in\E_\mx(T)$ by part~(a).
Furthermore, since $t'$ stays an epimorphism after applying $\Hom_\A(T,-)$
and $T'$, $E$ satisfy~(i$_\mx$), it follows that $Z'$ satisfies~(i$_\mx$)
and the middle column splits. Hence there exists a short exact sequence
$0\rarrow T_0\rarrow S\rarrow Z'\rarrow0$ and $Z'\in\E_\mx(T)$ by part~(b).

 Now we can return to our short exact sequence $0\rarrow E'\rarrow
E\rarrow E''\rarrow\nobreak0$.
 Clearly, if the objects $E$ and $E''$ satisfy~(i$_\mx$) and
the map $\Hom_\A(T,E)\rarrow\Hom_\A(T,E'')$ is surjective,
then the object $E'$ also satisfies~(i$_\mx$).
 Furthermore, if $T_0\rarrow E$ is an $\Add(T)$-precover with
the kernel $Z_0$ and if $Z''_0$ is the kernel of the composition
$T_0\rarrow E\rarrow E''$, then $Z_0$, $Z''_0\in\E_\mx(T)$
by the previous paragraph.
 It remains to apply part~(b) to the short exact sequence $0\rarrow
Z_0\rarrow Z_0''\rarrow E'\rarrow0$ in order to conclude that
$E'\in\E_\mx(T)$.

 Part~(d): Let $E'$ and $E''$ be two objects in $\A$ for which
$E=E'\oplus E''\in\E_\mx(T)$.
 Then it is obvious that $E'$ and $E''$ satisfy~(i$_\mx$).
 Starting from the exact sequence~(ii$_\mx$) for the object $E$,
we will simultaneously construct similar exact sequences for
the two objects $E'$ and~$E''$.
 Applying the construction of part~(b) to the short exact sequence
$0\rarrow E'\rarrow E\rarrow E''\rarrow0$, we get an epic
$\Add(T)$\+precover $T_0\rarrow E''$ with the kernel $Z_0''$ included
into a short exact sequence $0\rarrow Z_0\rarrow Z_0''\rarrow E'
\rarrow0$.
 Applying the same construction to the short exact sequence
$0\rarrow E''\rarrow E\rarrow E'\rarrow 0$, we have an epic
$\Add(T)$\+precover $T_0\rarrow E'$ with the kernel $Z_0'$ included
into a short exact sequence $0\rarrow Z_0\rarrow Z_0'\rarrow E''
\rarrow\nobreak0$.

 Continuing with an epic $\Add(T)$\+precover $T_1\rarrow Z_0$
and applying the construction of part~(a), we obtain an epic
$\Add(T)$\+precover $T_1\oplus T_0\rarrow Z_0''$ with the kernel
$Z_1''$ included into a short exact sequence $0\rarrow Z_1\rarrow
Z_1''\rarrow Z_0'\rarrow0$.
 Proceeding in this way, we obtain an epic $\Add(T)$\+precover
$T_2\oplus T_1\oplus T_0\rarrow Z_1''$ with the kernel $Z_2''$
included into a short exact sequence $0\rarrow Z_2\rarrow Z_2''
\rarrow Z_1'\rarrow0$, an epic $\Add(T)$\+precover
$T_3\oplus T_2\oplus T_1\oplus T_0\rarrow Z_2''$, etc.
 Hence we obtain a long exact sequence satisfying the requirements
of~(ii$_\mx$) for $E''$ of the form
$$
 \dotsb\lrarrow T_2 \oplus T_1\oplus T_0\lrarrow T_0\oplus T_1\lrarrow
 T_0\lrarrow E''\lrarrow0,
$$
and there is a similar sequence of the same form for~$E'$.
\end{proof}

 It follows from Lemma~\ref{wakamatsu-lemma}(c) that, given an object
$E\in\E_\mx(T)$, one can construct an exact sequence~(ii$_\mx$) for it
by choosing an arbitrary $\Add(T)$\+precover $T_0\rarrow E$, taking
its kernel $Z_0$, choosing an arbitrary $\Add(T)$\+precover
$T_1\rarrow Z_0$, etc.
 Whichever $\Add(T)$\+precovers one chooses, all the subsequent
$\Add(T)$\+precovers will be epimorphisms, so one will not encounter
any problems in this process.

 In view of Lemma~\ref{wakamatsu-lemma}(a), for any weakly tilting
object $T\in\A$, the full subcategory $\E_\mx(T)\subset\A$ inherits
a Quillen exact category structure from the abelian category~$\A$.
 There are enough projective objects in the exact category $\E_\mx(T)$,
and the full subcategory of projective objects in $\E_\mx(T)$ coincides
with $\Add(T)\subset\E_\mx(T)\subset\A$.

\medskip

Given a full subcategory $\E$ of an idempotent complete
exact category $\A$, we will call $\E$ a \emph{coresolving}
subcategory provided that
\begin{enumerate}
\renewcommand{\theenumi}{\alph{enumi}}
\item $\E$ is closed under extensions, cokernels of admissible
monomorphisms, and direct summands in $\A$, and 
\item $\E$ is cogenerating in $\A$, i.e.\ each $A\in\A$ admits an admissible monomorphism $A\rarrow E$ in $\A$ with $E\in\E$.
\end{enumerate}
Coresolving subcategories provide a suitable
framework to speak of coresolution dimensions of objects
\cite[\S2]{St}, \cite[Ch.~3]{AB69}.

Let now $\A$ be an abelian category with set-indexed
products and an injective cogenerator $J\in\A$.
Then set-indexed coproducts exist and are exact in~$\A$
\cite[Section~2]{PS}.
The full subcategory of injective objects in $\A$
can be described as $\A_\inj=\Prod(J)$.

We will say that an object $T\in\A$ is \emph{$\infty$\+tilting}
(or \emph{big Wakamatsu tilting}) if $T$ is weakly tilting and
$\A_\inj\subset\E_\mx(T)$.
 In this case, the full subcategory $\E_\mx(T)\subset\A$
is coresolving, there are enough injective objects
in the exact category $\E_\mx(T)$, and these are precisely
the injective objects of the ambient abelian category~$\A$.

\medskip

 Now let us present the dual definitions.
 Let $\B$ be an abelian category with products.
 We will say that an object $W\in\B$ is \emph{weakly cotilting}
if one has
$$
 \Ext^i_\B(W^X,W)=0 \quad\text{for all sets $X$
 and all integers $i>0$}.
$$

 Let $W\in\B$ be a weakly cotilting object.
 By the definition, the full subcategory $\F_\mx(W)\subset\B$ consists
of all the objects $F\in\B$ satisfying the two conditions
\begin{enumerate}
\renewcommand{\theenumi}{\roman{enumi}$_\mx^{\textstyle*}$}
\item $\Ext^i_\B(F,W)=0$ for all $i>0$; and
\item there exists an exact sequence
$$
 0\lrarrow F\lrarrow W^0\lrarrow W^1\lrarrow W^2\lrarrow\dotsb 
$$
in $\B$ such that $W^j\in\Prod(W)$ for all $j\ge0$ and the sequence
remains exact after applying the contravariant functor
$\Hom_\B({-},W)$.
\end{enumerate}

\begin{lem} \label{wakamatsu-colemma}
 For any weakly cotilting object $W\in\B$, the full subcategory\/
$\F_\mx(T)$ in the abelian category\/ $\B$ is closed under \par
\textup{(a)} extensions, \par
\textup{(b)} the kernels of epimorphisms, \par
\textup{(c)} the cokernels of those monomorphisms which are
transformed into surjective maps by the contravariant functor\/
$\Hom_\B({-},W)$, and \par
\textup{(d)} direct summands.
\end{lem}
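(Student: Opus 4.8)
The plan is to deduce this lemma directly from Lemma~\ref{wakamatsu-lemma} by passing to the opposite category, since the two statements are formally dual. As $\B$ is an abelian category with products, the opposite category $\B^\rop$ is an abelian category with coproducts, which is exactly the setting of Lemma~\ref{wakamatsu-lemma}. The first step is to check that the weakly cotilting object $W\in\B$ becomes a weakly tilting object of $\B^\rop$. The coproduct of $X$ copies of $W$ in $\B^\rop$ is the product $W^X$ computed in $\B$, and $\Ext^i_{\B^\rop}(W,W^X)=\Ext^i_\B(W^X,W)$; so the vanishing defining ``weakly cotilting'' in $\B$ is literally the vanishing defining ``weakly tilting'' in $\B^\rop$.

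Next I would match up the two defining conditions. Under the duality the subcategory $\Prod(W)\subset\B$ corresponds to $\Add(W)\subset\B^\rop$, and the contravariant functor $\Hom_\B({-},W)$ corresponds to the covariant functor $\Hom_{\B^\rop}(W,{-})$. Condition~(i$_\mx^*$), that $\Ext^i_\B(F,W)=0$ for $i>0$, reads in $\B^\rop$ as $\Ext^i_{\B^\rop}(W,F)=0$, which is~(i$_\mx$). The coresolution in~(ii$_\mx^*$), an exact sequence $0\rarrow F\rarrow W^0\rarrow W^1\rarrow\dotsb$ with terms in $\Prod(W)$ that stays exact under $\Hom_\B({-},W)$, becomes in $\B^\rop$ a resolution $\dotsb\rarrow W^1\rarrow W^0\rarrow F\rarrow0$ with terms in $\Add(W)$ that stays exact under $\Hom_{\B^\rop}(W,{-})$, which is precisely~(ii$_\mx$). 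Hence the full subcategory $\F_\mx(W)\subset\B$ is identified with $\E_\mx(W)\subset\B^\rop$.

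It then remains to read off the four closure properties from Lemma~\ref{wakamatsu-lemma} applied to $W$ in $\B^\rop$. Closure under extensions~(a) and under direct summands~(d) are self-dual and transfer at once. Closure of $\E_\mx(W)$ under cokernels of monomorphisms in $\B^\rop$, i.e.\ part~(b) of Lemma~\ref{wakamatsu-lemma}, is closure of $\F_\mx(W)$ under kernels of epimorphisms in $\B$, which is part~(b) here; and part~(c) of Lemma~\ref{wakamatsu-lemma}, closure under kernels of those epimorphisms in $\B^\rop$ that remain epimorphisms after $\Hom_{\B^\rop}(W,{-})$, translates into closure of $\F_\mx(W)$ under cokernels of those monomorphisms in $\B$ that $\Hom_\B({-},W)$ sends to surjections, which is part~(c) here. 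There is no genuine obstacle; the only point demanding care is the bookkeeping in part~(c), where one must verify that ``stays an epimorphism after the covariant $\Hom_{\B^\rop}(W,{-})$'' in the opposite category corresponds exactly to ``becomes surjective under the contravariant $\Hom_\B({-},W)$'' in $\B$, so that the variance of the Hom-exactness condition is tracked correctly.
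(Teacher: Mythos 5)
Your proposal is correct and is exactly the paper's approach: the paper's entire proof reads ``Dual to Lemma~\ref{wakamatsu-lemma},'' and your argument simply spells out that dualization---identifying $\F_\mx(W)\subset\B$ with $\E_\mx(W)\subset\B^\rop$ and tracking how the four closure properties and the variance of the Hom-exactness condition translate. The bookkeeping you perform (weakly cotilting in $\B$ equals weakly tilting in $\B^\rop$, (i$_\mx^{\textstyle*}$)/(ii$_\mx^{\textstyle*}$) become (i$_\mx$)/(ii$_\mx$), and parts (b), (c) swap cokernels of monomorphisms with kernels of epimorphisms) is precisely what the paper leaves implicit.
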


\begin{proof}
 Dual to Lemma~\ref{wakamatsu-lemma}.
\end{proof}

 The definition of a \emph{$\Prod(W)$\+preenvelope} in $\B$ is dual
to the above definition of an $\Add(T)$\+precover in~$\A$.
 The morphism $F\rarrow W^0$ in an exact
sequence~(ii$_\mx^{\textstyle*}$) is a $\Prod(W)$\+preenvelope.
 Denoting the cokernel of this morphism by $Z^0$, the morphism
$Z^0\rarrow W^1$ is also a $\Prod(W)$\+preenvelope, etc.

 Conversely, it follows from Lemma~\ref{wakamatsu-colemma}(c) that,
given any object $F\in\F_\mx(W)$, one can construct an exact
sequence~(ii$_\mx^{\textstyle*}$) for it by choosing an arbitrary
$\Prod(W)$\+preenvelope $F\rarrow W^0$, taking its cokernel $Z^0$,
choosing an arbitrary $\Prod(W)$\+preenvelope $Z^0\rarrow W^1$, etc.
 Whichever $\Prod(W)$\+preenvelopes one chooses in this process, all
the subsequent $\Prod(W)$\+preenvelopes will be monomorphisms, so
one will not encounter any problems.

 In view of Lemma~\ref{wakamatsu-colemma}(a), for any weakly
cotilting object $W\in\B$, the full subcategory $\F_\mx(W)\subset\B$
inherits an exact category structure from the abelian category~$\B$.
 There are enough injective objects in the exact category $\F_\mx(W)$,
and the full subcategory of injective objects in $\F_\mx(W)$ coincides
with $\Prod(W)$.

\medskip

 Let $\B$ be an abelian category with set-indexed coproducts and
a projective generator $P\in\B$.
 Then set-indexed products exist and are exact in~$\B$.
 The full subcategory of projective objects in $\B$ can be described
as $\B_\proj=\Add(P)$.

 We will say that an object $W\in\B$ is \emph{$\infty$\+cotilting}
(or \emph{big Wakamatsu cotilting}) if $W$ is weakly cotilting
and $\B_\proj\subset\F_\mx(T)$.

 When the object $W$ is $\infty$\+cotilting, the full subcategory
$\F_\mx(W)\subset\B$ is \emph{resolving} (i.e.\ generating and closed under extensions, kernels of epimorphisms and direct summands).
 In this case, there are enough projective objects in the exact
category $\F_\mx(W)$, and these are precisely the projective objects
of the ambient abelian category~$\B$.

\begin{thm} \label{tilting-cotilting}
 Let\/ $\A$ be a complete, cocomplete abelian category with
an injective cogenerator $J$ and an\/ $\infty$\+tilting object
$T\in\A$.
 Put\/ $\B=\sigma_T(\A)$, and let\/ $\Phi\:\B\rarrow\A$ be the right
exact functor identifying the full subcategory of projective
objects\/ $\B_\proj\subset\B$ with the full subcategory\/
$\Add(T)\subset\A$.
 Let\/ $\Psi\:\A\rarrow\B$ be the left exact functor right adjoint
to\/ $\Phi$; so $P=\Psi(T)$ is a projective generator of\/~$\B$.
 Set $W=\Psi(J)\in\B$.

 Then $W$ is an\/ $\infty$\+cotilting object in\/~$\B$,
and the restrictions of the functors\/ $\Psi$ and\/ $\Phi$
induce a pair of inverse equivalences of exact categories between
$\E_\mx(T)$ and\/ $\F_\mx(W)$ (see Figure~\ref{tilting-cotilting-fig}),
which identify the\/ $\infty$\+tilting object $T\in\A$ with
the projective generator $P\in\B$ and the\/ $\infty$\+cotilting object
$W\in\B$ with the injective cogenerator $J\in\A$.
%
\end{thm}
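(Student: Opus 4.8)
The plan is to run everything off the adjunction $\Phi\dashv\Psi$, using two structural facts: that $\Psi=\Hom_\A(T,{-})$ followed by the forgetful functor $U=\Hom_\B(P,{-})$ recovers $\Hom_\A(T,{-})$ (since $\Hom_\B(P,\Psi(N))\cong\Hom_\A(\Phi(P),N)=\Hom_\A(T,N)$), and that $U$, being corepresented by the projective generator $P$, is exact and faithful and hence reflects exactness. I would first record the elementary bookkeeping: $\Psi$ sends $\Add(T)$ to $\B_\proj=\Add(P)$ and, preserving products, sends $\Prod(J)$ to $\Prod(W)$, while $\Phi$ sends $\B_\proj$ back to $\Add(T)$; moreover the counit $\Phi\Psi\rarrow\id$ is invertible on $\Add(T)$ and the unit $\id\rarrow\Psi\Phi$ is invertible on $\B_\proj$, whence (by the triangle identities) the unit is also invertible on every object of $\Prod(W)=\Psi(\Prod(J))$ once the counit is known to be invertible on injectives.

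Next I treat the $\Psi$-direction. A short exact sequence with all three terms in $\E_\mx(T)$ remains exact after $\Hom_\A(T,{-})$, because its left term $E'$ satisfies $\Ext^1_\A(T,E')=0$; since $U$ reflects exactness this shows $\Psi|_{\E_\mx(T)}$ is exact. Applying $\Phi$ to the projective resolution $\Psi(T_\bullet)\rarrow\Psi(E)$ obtained from an $\Add(T)$-resolution of $E$ shows $\epsilon_E\colon\Phi\Psi(E)\rarrow E$ is invertible, so $\Psi|_{\E_\mx(T)}$ is fully faithful. Using the same resolution one computes $\Ext^i_\B(\Psi(E),\Psi(N))\cong H^i\Hom_\A(T_\bullet,N)$; taking $N=J$ injective makes $\Hom_\A({-},J)$ exact, so this vanishes for $i>0$. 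With $E=J^X$ this yields $\Ext^i_\B(W^X,W)=0$, i.e.\ $W$ is weakly cotilting, and in general it gives condition (i$_\mx^{\textstyle*}$) for each $\Psi(E)$. Feeding an injective coresolution of $E$ (whose cosyzygies lie in $\E_\mx(T)$ since it is coresolving) through the exact functor $\Psi$ then produces the $\Prod(W)$-coresolution required by (ii$_\mx^{\textstyle*}$), its $\Hom_\B({-},W)$-exactness following from (i$_\mx^{\textstyle*}$) for the cosyzygies. Thus $\Psi(\E_\mx(T))\subseteq\F_\mx(W)$; since $\Add(T)\subseteq\E_\mx(T)$, this also gives $\B_\proj\subseteq\F_\mx(W)$, so $W$ is $\infty$\+cotilting.

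The $\Phi$-direction is where the real work lies, because $\Phi$ is only right exact. The crux is the vanishing $L_i\Phi(F)=0$ for $i>0$ and $F\in\F_\mx(W)$, which I would prove by testing against the injective cogenerator: for injective $J'$, exactness of $\Hom_\A({-},J')$ and the adjunction give $\Hom_\A(L_i\Phi(F),J')\cong\Ext^i_\B(F,\Psi(J'))$, and $\Psi(J')\in\Prod(W)$ together with (i$_\mx^{\textstyle*}$) and exactness of products in $\B$ forces this to vanish; as $J$ is a cogenerator, $L_i\Phi(F)=0$. With this in hand, $\Phi$ becomes exact on the $\Prod(W)$-coresolution of $F$ (whose cosyzygies again lie in $\F_\mx(W)$), and since the counit is invertible on injectives this yields an injective coresolution $0\rarrow\Phi(F)\rarrow\widehat J{}^\bullet$ in $\A$; computing $\Ext^i_\A(T,\Phi(F))$ from it gives (i$_\mx$). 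Comparing this coresolution with the original one via the five lemma, the unit being invertible on each $\Prod(W)$-object, shows $\eta_F\colon F\rarrow\Psi\Phi(F)$ is an isomorphism. Finally, applying $\Phi$ to a projective resolution $Q_\bullet\rarrow F$ gives an $\Add(T)$-resolution $T_\bullet\rarrow\Phi(F)$; the identification $\Hom_\A(T,T_\bullet)\cong U(Q_\bullet)$ together with invertibility of $\eta_F$ shows it stays exact under $\Hom_\A(T,{-})$, which is (ii$_\mx$). Hence $\Phi(F)\in\E_\mx(T)$.

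Putting the two directions together, $\Psi\colon\E_\mx(T)\rightleftarrows\F_\mx(W)\colon\Phi$ are mutually inverse (counit invertible on one side, unit on the other) and exact, and they match the distinguished objects via $\Psi(T)=P$ and $\Phi(W)=\Phi\Psi(J)\cong J$. The one genuinely delicate step, and the one I expect to cost the most effort, is the derived-functor vanishing $L_{>0}\Phi|_{\F_\mx(W)}=0$: everything on the $\Phi$-side—exactness on coresolutions, the unit isomorphism, and membership in $\E_\mx(T)$—is bootstrapped from it, whereas the $\Psi$-side is comparatively formal once one observes that $U$ reflects exactness.
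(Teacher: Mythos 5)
Your proposal is correct, but it parts ways with the paper in its second half. On the $\Psi$\+side you and the paper do essentially the same things: exactness of $\Psi|_{\E_\mx(T)}$ via the faithful exact functor $\Hom_\B(P,{-})$, full faithfulness from the $\Add(T)$\+resolutions of~(ii$_\mx$) (the paper computes Hom groups directly from these presentations, you instead show the counit $\Phi\Psi(E)\rarrow E$ is invertible by applying the right exact $\Phi$ to them --- the same mechanism in different clothing), the weak cotilting property and~(i$_\mx^{\textstyle*}$) from the vanishing of $H^i\Hom_\A(T_\bu,J)$, and~(ii$_\mx^{\textstyle*}$) by pushing an injective coresolution through the exact functor $\Psi|_{\E_\mx(T)}$. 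The divergence is in the $\Phi$\+direction. The paper disposes of it in two sentences: having shown that $W$ is $\infty$\+cotilting and that $\Psi$ restricts to an equivalence $\A_\inj\simeq\Prod(W)$, it notes that $\A=\pi_W(\B)$, so the hypotheses of the theorem are reproduced in exactly dual form, and the duals of the assertions already proved give that $\Phi|_{\F_\mx(W)}$ is exact, fully faithful, and lands in $\E_\mx(T)$. You instead prove all of this directly, anchored on the vanishing $L_i\Phi(F)=0$ for $i>0$ and $F\in\F_\mx(W)$, obtained by testing against injectives via $\Hom_\A(L_i\Phi(F),J')\cong\Ext^i_\B(F,\Psi(J'))$ and the cogenerator property of~$J$; this step is correct (one only needs $\Psi(J')\in\Prod(W)$, condition~(i$_\mx^{\textstyle*}$), and exactness of products of abelian groups), and it is in effect the unfolded, mirror image of your own observation that $\Hom_\B(P,\Psi({-}))\cong\Hom_\A(T,{-})$, with the projective generator replaced by the injective cogenerator. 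What the paper's dualization buys is brevity and immunity from bookkeeping errors; what your direct argument buys is a self-contained proof that never asks the reader to verify that the quadruple $(\B,P,W,\A=\pi_W(\B))$ satisfies the precise dual hypotheses, and it exposes the homological mechanism (derived-functor vanishing) that the duality keeps implicit. Both routes are complete.
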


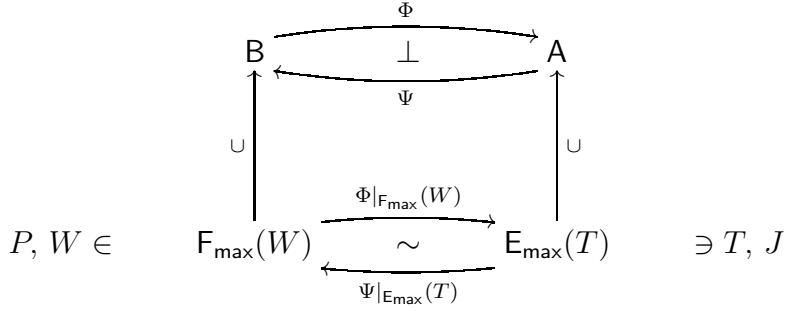
\begin{figure}
\[
\xymatrix{
& \B \ar@/^1ex/@<1ex>[rr]^-{\Phi} & \perp & \A \ar@/^1ex/@<1ex>[ll]^-{\Psi} & \\ \\
P,\, W \in & \F_\mx(W) \ar@/^1ex/@<1ex>[rr]^-{\Phi|_{\F_\mx}(W)} \ar[uu]^-{\cup} & \sim & \E_\mx(T) \ar@/^1ex/@<1ex>[ll]^-{\Psi|_{\E_\mx}(T)} \ar[uu]_-{\cup} & \ni T,\, J \\
}
\]
\caption{Illustration of the $\infty$-Tilting-Cotilting Correspondence
(see Theorems~\ref{tilting-cotilting} and~\ref{cotilting-tilting} and
Corollary~\ref{tilting-cotilting-cor}).}%
\label{tilting-cotilting-fig}%
\end{figure}

\begin{proof}
The functor $\Psi|_{\E_\mx}(T)\:\E_\mx(T)\rarrow\B$ is exact, because
the functor $\Psi$ can be computed as $\Hom_\A(T,{-})$, and
the condition~(i$_\mx$) is imposed.

 To check that the functor $\Psi|_{\E_\mx(T)}$ is fully faithful, one
can choose for any two objects $E'$ and $E''\in\E_\mx(T)$ two initial
fragments $T'_1\rarrow T'_0\rarrow E'\rarrow0$ and $T''_1\rarrow T''_0
\rarrow E''\rarrow0$ of exact sequences~(ii$_\mx$).
 The two sequences being exact in the exact category $\E_\mx(T)$ and
the objects of $\Add(T)$ being projective in $\E_\mx(T)$, one can
compute the group $\Hom_\A(E',E'')$ as the group of all morphisms
$T'_0\rarrow T''_0$ forming a commutative square with some morphism
$T'_1\rarrow T''_1$, modulo those morphisms that come from some
morphism $T'_0\rarrow T''_1$.
 The functor $\Psi$ takes the exact sequences $T_1^{(k)}\rarrow
T_0^{(k)}\rarrow E^{(k)}\rarrow0$, $k=1$, $2$, to exact sequences
$\Psi(T_1^{(k)})\rarrow\Psi(T_0^{(k)})\rarrow\Psi(E^{(k)})\rarrow0$
with the objects $\Psi(T_j^{(k)})$ belonging to $\B_\proj$, so
the groups $\Hom_\B(\Psi(E'),\Psi(E''))$ can be computed similarly in
terms of morphisms between the objects $\Psi(T_j^{(k)})$.
 It remains to recall that the functor $\Psi|_{\Add(T)}$ is fully
faithful (see Section~\ref{co-tilted-categories}).

 Furthermore, since the functor $\Psi|_{\E_\mx(T)}$ is exact and fully
faithful, and takes the projective objects of $\E_\mx(T)$ to projective
objects in $\B$, and since there are enough projectives in $\E_\mx(T)$,
it follows that the functor $\Psi|_{\E_\mx(T)}$ induces isomorphisms
of the Ext groups
$$
 \Ext^i_{\E_\mx(T)}(E',E'')\simeq\Ext^i_\B(\Psi(E'),\Psi(E''))
$$
for all objects $E'$ and $E''\in\E_\mx(T)$ and all $i\ge0$.
 Similarly, as there are enough injectives in $\E_\mx(T)$ and
the injectives of $\E_\mx(T)$ are injective in $\A$, one has
$$
 \Ext^i_{\E_\mx(T)}(E',E'')\simeq\Ext^i_\A(E',E''), \qquad
 E', \ E''\in\E_\mx(T),\ \ i\ge0.
$$
 The functor $\Psi$, being a right adjoint, preserves products; so
the equations $\Prod(J)=\A_\inj$ and $W=\Psi(J)$ imply
$\Prod(W)=\Psi(\A_\inj)$.
 In particular, $W^X=\Psi(J^X)$ for any set~$X$.
 As $\A_\inj\subset\E_\mx(T)$ and $\Ext_\A^i(J^X,J)=0$ for $i>0$,
it follows that $\Ext_\B^i(W^X,W)=0$. 
 So the object $W\in\B$ is weakly cotilting.

 Moreover, for the same reasons one has $\Ext^i_\B(\Psi(E),W)=0$
for all $E\in\E_\mx(T)$ and $i>0$.
 In other words, the objects $\Psi(E)\in\B$ satisfy
the condition~(i$_\mx^{\textstyle*}$).
 Let us show that they also satisfy~(ii$_\mx^{\textstyle*}$),
that is $\Psi(\E_\mx(T))\subset\F_\mx(W)$.
 Let $0\rarrow E\rarrow J^0\rarrow J^1\rarrow J^2\rarrow\dotsb$
be an injective coresolution of $E$ in~$\A$.
 In view of Lemma~\ref{wakamatsu-lemma}(b), this coresolution is
an acyclic complex in the exact category $\E_\mx(T)$.
 The object $J\in\A$ being injective, this coresolution is taken to
an acyclic complex of abelian groups by the contravariant
functor $\Hom_\A({-},J)$.
 Hence, applying the fully faithful exact functor $\Psi|_{\E_\mx(T)}$,
we obtain a coresolution~(ii$_\mx^{\textstyle*}$)
for the object $\Psi(E)$.
 Thus $\B_\proj=\Psi(\Add(T))\subset\Psi(\E_\mx(T))\subset
\F_\mx(W)$, and we have shown that the object $W$ is
$\infty$\+cotilting in~$\B$.

 There are enough injective objects in the category $\A$, and
the left exact functor $\Psi$ establishes an equivalence
$\A_\inj\simeq\Prod(W)$.
 Hence we have $\A=\pi_W(\B)$.
 The assertions dual to what we have already proved now tell that
the functor $\Phi$ is exact and fully faithful in restriction to
$\F_\mx(W)$ and that $\Phi(\F_\mx(W))\subset\E_\mx(T)$.
 Being an adjoint pair of exact and fully faithful functors,
$\Psi|_{\E_\mx(T)}$ and $\Phi|_{\F_\mx(W)}$ are equivalences
of the exact categories $\E_\mx(T)$ and $\F_\mx(W)$.
\end{proof}

\begin{thm} \label{cotilting-tilting}
 Let\/ $\B$ be a complete, cocomplete abelian category with
a projective generator $P$ and an\/ $\infty$\+cotilting object
$W\in\B$.
 Put\/ $\A=\pi_W(\B)$, and let\/ $\Psi\:\A\rarrow\B$ be the left
exact functor identifying the full subcategory of injective
objects\/ $\A_\inj\subset\A$ with the full subcategory\/
$\Prod(W)\subset\B$.
 Let\/ $\Phi\:\B\rarrow\A$ be the right exact functor left adjoint
to\/ $\Psi$; so $J=\Phi(W)$ is a injective cogenerator of\/~$\A$.
 Set $T=\Phi(P)\in\A$.

 Then $T$ is an\/ $\infty$\+tilting object in\/~$\A$,
and the restrictions of the functors\/ $\Phi$ and\/ $\Psi$
induce a pair of inverse equivalences of exact categories between
$\F_\mx(W)$ and\/ $\E_\mx(T)$ (see Figure~\ref{tilting-cotilting-fig}),
which identify the\/ $\infty$\+cotilting object $W\in\B$ with
the injective cogenerator $J\in\A$ and the\/ $\infty$\+tilting
object $T\in\A$ with the projective generator $P\in\B$.
%
\end{thm}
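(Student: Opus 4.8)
The plan is to deduce this theorem from Theorem~\ref{tilting-cotilting} by passing to opposite categories, since the two statements are formally dual and all the homological content is already contained in the former. First I would set $\A':=\B^\sop$. As $\B$ is complete and cocomplete, so is $\A'$; the projective generator $P\in\B$ becomes an injective cogenerator $J':=P$ of $\A'$, and the equality $\B_\proj=\Add(P)$ dualizes to $\A'_\inj=\Prod(J')$. I would then verify that $T':=W$ is an $\infty$\+tilting object in $\A'$: the defining vanishing $\Ext^i_{\A'}(T',T'^{(X)})=\Ext^i_\B(W^X,W)=0$ is exactly weak cotiltingness of $W$, while the inclusion $\A'_\inj\subset\E_\mx(T')$ translates, under the identification $\E_\mx^{\A'}(W)=\F_\mx^\B(W)^\sop$ of the two ``max'' subcategories (which follows by comparing conditions (i$_\mx$),(ii$_\mx$) in $\A'$ with (i$_\mx^{\textstyle*}$),(ii$_\mx^{\textstyle*}$) in $\B$), into $\B_\proj\subset\F_\mx(W)$, which holds by hypothesis.

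The key bookkeeping step is to match the output of Theorem~\ref{tilting-cotilting} applied to $(\A',J',T')$ with the data in the present statement. Here I would use that the constructions $\sigma$ and $\pi$ of Section~\ref{co-tilted-categories} are mutually dual by their very definitions: $\sigma_W(\B^\sop)$ is the category of modules over the monad $X\mapsto\Hom_{\B^\sop}(W,W^{(X)})=\Hom_\B(W^X,W)$, and $\pi_W(\B)$ is the opposite of the category of modules over this same monad. Hence $\B':=\sigma_{T'}(\A')=\pi_W(\B)^\sop=\A^\sop$. Under this identification the right exact functor $\Phi'\:\B'\rarrow\A'$ of Theorem~\ref{tilting-cotilting} is $\Psi^\sop$ and its right adjoint $\Psi'\:\A'\rarrow\B'$ is $\Phi^\sop$; correspondingly $P'=\Psi'(T')=\Phi(W)=J$ is the projective generator of $\B'=\A^\sop$, i.e.\ the injective cogenerator $J$ of $\A$, and $W'=\Psi'(J')=\Phi(P)=T$.

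With this dictionary in place, the conclusions of Theorem~\ref{tilting-cotilting} translate directly. The assertion that $W'=T$ is $\infty$\+cotilting in $\B'=\A^\sop$ is precisely the claim that $T$ is $\infty$\+tilting in $\A$. The inverse equivalences of exact categories $\Psi'|_{\E_\mx(T')}$ and $\Phi'|_{\F_\mx(W')}$ between $\E_\mx(T')\subset\A'$ and $\F_\mx(W')\subset\B'$ dualize, via $\E_\mx^{\A'}(W)=\F_\mx^\B(W)^\sop$ and $\F_\mx^{\A^\sop}(T)=\E_\mx^\A(T)^\sop$ together with $\Psi'=\Phi^\sop$, $\Phi'=\Psi^\sop$, into the claimed inverse equivalences $\Phi|_{\F_\mx(W)}$ and $\Psi|_{\E_\mx(T)}$ between $\F_\mx(W)$ and $\E_\mx(T)$; and the identifications $T'\leftrightarrow P'$, $W'\leftrightarrow J'$ of Theorem~\ref{tilting-cotilting} become the asserted identifications $W\leftrightarrow J$ and $T\leftrightarrow P$.

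The main obstacle I anticipate is not any new homological input but the care required to confirm that every piece of structure interchanges correctly under the opposite-category passage: that ``injective cogenerator'' and ``projective generator'' swap, that the two constructions $\sigma$ and $\pi$ are genuinely dual and yield $\sigma_W(\B^\sop)=\A^\sop$, that the adjoint pair $(\Phi,\Psi)$ reverses roles as $(\Psi^\sop,\Phi^\sop)$, and that the subcategories $\E_\mx$ and $\F_\mx$ match as stated. Once these identifications are checked, no hypothesis of Theorem~\ref{tilting-cotilting} is left unverified for $\A'=\B^\sop$, and the theorem follows without any further argument.
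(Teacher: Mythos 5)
Your proposal is correct and matches the paper's own proof, which consists precisely of the single line ``Dual to Theorem~\ref{tilting-cotilting}''; what you have done is spell out in full the opposite-category dictionary (the identifications $\sigma_W(\B^\sop)=\pi_W(\B)^\sop$, \ $\Phi'=\Psi^\sop$, \ $\Psi'=\Phi^\sop$, and $\E_\mx^{\B^\sop}(W)=\F_\mx^{\B}(W)^\sop$) that the paper leaves implicit. All of these identifications check out, so the argument is complete.
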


\begin{proof}
 Dual to Theorem~\ref{tilting-cotilting}.
\end{proof}

\begin{cor} \label{tilting-cotilting-cor}
 The constructions of Theorems~\textup{\ref{tilting-cotilting}}
and~\textup{\ref{cotilting-tilting}} establish a one-to-one
correspondence between equivalence classes of
\begin{enumerate}
\item complete, cocomplete abelian categories\/ $\A$
with an injective cogenerator $J$ and an\/ $\infty$\+tilting object $T$,
and
\item complete, cocomplete abelian categories\/ $\B$ with a projective
generator $P$ and an\/ $\infty$\+cotilting object~$W$.  \qed
\end{enumerate}
\end{cor}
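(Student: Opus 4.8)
The plan is to show that the two constructions of Theorems~\ref{tilting-cotilting} and~\ref{cotilting-tilting} are mutually inverse, so that they set up the claimed bijection on equivalence classes. First I would fix what ``equivalence class'' means: two triples $(\A,T,J)$ and $(\A',T',J')$ are equivalent if there is an equivalence of abelian categories $\A\simeq\A'$ carrying $T$ to $T'$ and $J$ to $J'$ (up to isomorphism); similarly for triples $(\B,P,W)$. The correspondence in one direction sends $(\A,T,J)\mapsto(\B,P,W)$ with $\B=\sigma_T(\A)$, $P=\Psi(T)$, $W=\Psi(J)$ as in Theorem~\ref{tilting-cotilting}; the reverse sends $(\B,P,W)\mapsto(\A',T',J')$ with $\A'=\pi_W(\B)$, $T'=\Phi(P)$, $J'=\Phi(W)$ as in Theorem~\ref{cotilting-tilting}. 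The two theorems already guarantee that the output of each construction is a valid input of the other (i.e.\ $W$ is genuinely $\infty$\+cotilting and $T'$ is genuinely $\infty$\+tilting), so the only remaining task is to verify that composing the two constructions returns the original triple up to equivalence, in both orders.

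The key step is therefore to produce, starting from $(\A,T,J)$, a canonical equivalence $\A\simeq\pi_W(\B)=\A'$ under which $T'=\Phi(P)$ corresponds to $T$ and $J'=\Phi(W)$ corresponds to $J$. Here I would lean directly on the proof of Theorem~\ref{tilting-cotilting}: it already establishes the identity $\A=\pi_W(\B)$, via the observation that the left exact functor $\Psi$ restricts to an equivalence $\A_\inj\simeq\Prod(W)$ of the injective objects. By Theorem~\ref{add-prod-theorem}(b), an abelian category with enough injectives is recovered uniquely (up to equivalence) from its subcategory of injectives, so the equivalence $\A_\inj\simeq\Prod(W)$ extends canonically to $\A\simeq\pi_W(\B)$. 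Under this identification $\Psi$ becomes the structural left exact functor $\pi_W(\B)\rarrow\B$ of Theorem~\ref{cotilting-tilting}, hence its left adjoint $\Phi$ is identified with the $\Phi$ of that theorem, so $J'=\Phi(W)$ is carried to $\Phi(J)$-preimage, namely to the original $J$, and $T'=\Phi(P)=\Phi(\Psi(T))$ is carried back to $T$ (using that $T\in\E_\mx(T)$ lies in the subcategory on which $\Phi\Psi\simeq\mathrm{id}$, by the equivalence $\E_\mx(T)\simeq\F_\mx(W)$ already proved). The reverse composition, starting from $(\B,P,W)$, is handled by the dual argument, invoking Theorem~\ref{add-prod-theorem}(a) and the uniqueness of an abelian category with enough projectives from its projective objects.

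The main obstacle I anticipate is purely bookkeeping: making the chain of canonical identifications rigorous without circularity, since the functors named $\Phi$ and $\Psi$ appear in both theorems and one must check they literally coincide (not merely that adjoint functors with the same behaviour on (co)generators exist). The cleanest way to manage this is to phrase everything through the two uniqueness statements of Theorem~\ref{add-prod-theorem}: the triple $(\B,P)$ is determined by $\Add(T)\subset\A$ together with its distinguished object, and the triple $(\A,J)$ is determined by $\Prod(W)\subset\B$ together with its distinguished object. Because $\sigma_T(\A)$ depends only on $\Add(T)$ and $\pi_W(\B)$ depends only on $\Prod(W)$, and because Theorem~\ref{tilting-cotilting} already matches $\Add(T)\simeq\B_\proj$ and $\A_\inj\simeq\Prod(W)$ compatibly with the distinguished objects, the round trip is forced to be the identity on equivalence classes. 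I would assemble these observations into the short argument above; the content is entirely contained in the two preceding theorems, and the corollary is essentially their formal consequence, so I expect no genuinely hard step beyond this careful tracking of the canonical isomorphisms.
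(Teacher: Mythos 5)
Your proposal is correct and takes essentially the same route as the paper, which states the corollary with no separate proof precisely because the needed ingredients are already in the two theorems: the proof of Theorem~\ref{tilting-cotilting} establishes $\A=\pi_W(\B)$ (via $\Psi$ restricting to an equivalence $\A_\inj\simeq\Prod(W)$), the equivalence $\E_\mx(T)\simeq\F_\mx(W)$ identifies $T$ with $P$ and $J$ with $W$ so that $\Phi\Psi\simeq\mathrm{id}$ on these objects, and the uniqueness clauses of Theorem~\ref{add-prod-theorem} force the round trips to return the original triples up to equivalence. Your careful tracking of why the two functors named $\Phi$, $\Psi$ in the two theorems coincide under these identifications is exactly the bookkeeping the paper leaves implicit.
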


\Section{\texorpdfstring{$\infty$}{Infinity}-Tilting and \texorpdfstring{$\infty$}{Infinity}-Cotilting Pairs}
\label{infty-til-cotil-pairs-secn}

 As above, let $\A$ be an abelian category with set-indexed products and
an injective cogenerator $J\in\A$.
 Let $T\in\A$ be an object and $\E\subset\A$ be a full subcategory.
 We will say that $(T,\.\E)$ is an \emph{$\infty$\+tilting pair} in $\A$
if the following conditions hold:
\begin{enumerate}
\renewcommand{\theenumi}{\roman{enumi}}
\item $\A_\inj\subset\E$;
\item $\Add(T)\subset\E$;
\item $\Ext_\A^1(T,E)=0$ for all $E\in\E$;
\item $\E$ is closed under the cokernels of monomorphisms and
extensions in~$\A$;
\item every $\Add(T)$\+precover $T'\rarrow E$ of an object $E\in\E$
is an epimorphism in $\A$ with the kernel belonging to~$\E$.
\end{enumerate}

 Due to the condition~(iv), the full subcategory $\E\subset\A$ inherits
an exact category structure from the abelian category~$\A$.
 According to the condition~(i), there are enough injective objects
in the exact category $\E$, and these are precisely the injective
objects of the ambient abelian category $\A$, that is $\E_\inj=\A_\inj$.

 It follows from the condition~(iii) together with the condition~(i)
and the first part of the condition~(iv) that
$$
 \Ext_\A^i(T,E)=0 \quad\text{for all $E\in\E$ and
 all integers $i>0$}.
$$
 Hence, in view of the condition~(ii), the object $T\in\A$ has to be 
weakly tilting.

 From the conditions~(ii) and~(iii) we see that the objects of
$\Add(T)$ are projective in the exact category~$\E$.
 It follows from the condition~(v) that there are enough projective
objects belonging to $\Add(T)$ in $\E$.
 Hence there are enough projective objects in $\E$ and
the class of all projective objects in $\E$ coincides with $\Add(T)$,
that is $\E_\proj=\Add(T)$.

 Now it is clear that all the objects $E\in\E$ satisfy
the conditions~(i$_\mx$) and~(ii$_\mx$); so we have
$\E\subset\E_\mx(T)\subset\A$.
 From the condition~(i) we conclude that $\A_\inj\subset\E_\mx(T)$.
 Thus the object $T\in\A$ has to be $\infty$\+tilting.
 Conversely, according to Lemma~\ref{wakamatsu-lemma}, for
any $\infty$\+tilting object $T\in\A$ the pair $(T,\.\E_\mx(T))$
is an $\infty$\+tilting pair in~$\A$.
To summarize, we have shown the following.

\begin{lem} \label{tilting-objects-vs-tilting-pairs}
Let\/ $\A$ be a complete, cocomplete abelian category with
an injective cogenerator. Then an object $T\in\A$ is a part of
an $\infty$\+tilting pair $(T,\.\E)$ in\/ $\A$ if and only if it is
an $\infty$\+tilting object.
 The full subcategory\/ $\E=\E_\mx(T)$ is the maximal of all
full subcategories\/ $\E\subset\A$ forming an $\infty$\+tilting
pair with $T\in\nobreak\A$.
\qed
\end{lem}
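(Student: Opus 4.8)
The plan is to treat the two directions of the biconditional and the maximality clause separately, drawing throughout on the axioms (i)--(v) of an $\infty$\+tilting pair and on Lemma~\ref{wakamatsu-lemma}.

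For the \emph{only if} direction I would assume that $(T,\.\E)$ is an $\infty$\+tilting pair and deduce that $T$ is $\infty$\+tilting. The first step is to upgrade the single vanishing in~(iii) to $\Ext^i_\A(T,E)=0$ for all $E\in\E$ and all $i>0$: given $E\in\E$, enough injectives provide a monomorphism $E\to J^0$ with $J^0\in\A_\inj\subset\E$ by~(i), and its cokernel lies in $\E$ by~(iv), so a dimension\+shifting induction through the long exact sequence for $\Ext_\A(T,{-})$ reduces each $\Ext^{i+1}$ to an $\Ext^i$ over an object of $\E$. Combined with $\Add(T)\subset\E$ from~(ii), the special case $E=T^{(X)}$ shows $T$ is weakly tilting. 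I would then note that~(ii) and~(iii) make the objects of $\Add(T)$ projective in the exact category $\E$, that~(v) supplies enough of them, and hence that iterating $\Add(T)$\+precovers builds for every $E\in\E$ a resolution of the form~(ii$_\mx$). Thus every $E\in\E$ satisfies~(i$_\mx$) and~(ii$_\mx$), so $\E\subset\E_\mx(T)$; in particular $\A_\inj\subset\E_\mx(T)$ by~(i), which is exactly the statement that $T$ is $\infty$\+tilting.

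For the \emph{if} direction I would verify that, when $T$ is $\infty$\+tilting, the canonical subcategory $\E=\E_\mx(T)$ satisfies all five axioms. Axioms~(ii) and~(iii) hold by the very definition of $\E_\mx(T)$ (using that $T$ is weakly tilting for~(ii)); axiom~(i), namely $\A_\inj\subset\E_\mx(T)$, is precisely the defining property of an $\infty$\+tilting object; and axiom~(iv) is Lemma~\ref{wakamatsu-lemma}(a,b). Maximality then comes for free: the \emph{only if} argument shows that any $\E$ forming an $\infty$\+tilting pair with $T$ must satisfy $\E\subset\E_\mx(T)$, so $\E_\mx(T)$ is the largest such subcategory.

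The genuine content, and the only step I expect to require care, is axiom~(v) for $\E_\mx(T)$: that an arbitrary $\Add(T)$\+precover $T_0\to E$ of an object $E\in\E_\mx(T)$ is automatically an epimorphism whose kernel again lies in $\E_\mx(T)$. This is exactly what Lemma~\ref{wakamatsu-lemma}(c) delivers, together with the observation recorded after that lemma that the resolution~(ii$_\mx$) may be assembled from \emph{any} chosen precovers; so even here the work amounts to invoking a result already in hand rather than proving something new.
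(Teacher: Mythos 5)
Your proposal is correct and follows essentially the same route as the paper, whose "proof" is precisely the discussion preceding the lemma: dimension shifting via (i), (iii), (iv) to get full Ext-vanishing and weak tiltingness, identifying $\Add(T)$ as the projectives of the exact category $\E$ with enough of them by (v), concluding $\E\subset\E_\mx(T)$ (which also gives maximality), and conversely deducing the axioms for $(T,\.\E_\mx(T))$ from Lemma~\ref{wakamatsu-lemma} and the observation following it.
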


In general, we do not assume that $\E$ is closed under direct summands.
However, we can add that assumption whenever convenient (e.g.\ in
Sections~\ref{derived-equivalence-secn} or~\ref{t-structures-secn}):

\begin{lem} \label{Karoubian-closure-of-tilting-pair}
If $(T,\.\E)$ is an $\infty$\+tilting pair in\/ $\A$ and\/ $\E'$ is
the closure of\/ $\E$ under direct summands, then $(T,\.\E')$ is also
an $\infty$\+tilting pair and\/ $\E'$ is a coresolving subcategory
in\/~$\A$.
\end{lem}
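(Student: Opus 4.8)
The plan is to verify the five defining conditions (i)--(v) of an $\infty$\+tilting pair for $(T,\.\E')$ and then read off that $\E'$ is coresolving. Conditions~(i) and~(ii) are immediate, since $\A_\inj$ and $\Add(T)$ are already contained in $\E\subseteq\E'$. Condition~(iii) is also easy: every $E'\in\E'$ is a direct summand of some $E\in\E$, so $\Ext^1_\A(T,E')$ is a direct summand of $\Ext^1_\A(T,E)=0$. The whole content of the lemma therefore lies in verifying the closure condition~(iv) and the precover condition~(v) for $\E'$; granting these, the assertion that $\E'$ is coresolving follows at once, because $\E'$ is closed under direct summands by construction, it is cogenerating (as $\A_\inj\subseteq\E'$ and $\A$ has an injective cogenerator), and closure under extensions and cokernels of monomorphisms is exactly~(iv). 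Throughout I would use the device of \emph{completing a summand to an object of\/ $\E$}: for $A\in\E'$ I fix $A^c\in\E'$ with $A\oplus A^c=\bar A\in\E$, and similarly for the other objects in play.

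For the extension part of~(iv), given a short exact sequence $0\rarrow A\rarrow X\rarrow B\rarrow0$ with $A,B\in\E'$, I would form the direct sum with the two split sequences $0\rarrow A^c\rarrow A^c\rarrow 0\rarrow0$ and $0\rarrow0\rarrow B^c\rarrow B^c\rarrow0$, obtaining $0\rarrow\bar A\rarrow X\oplus A^c\oplus B^c\rarrow\bar B\rarrow0$. Since $\E$ is closed under extensions by condition~(iv) for $\E$, the middle term lies in $\E$, whence $X\in\E'$.

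The delicate step --- and the one I expect to be the main obstacle --- is closure of $\E'$ under cokernels of monomorphisms, because naively completing both outer terms of a short exact sequence to objects of $\E$ alters the cokernel. Given $0\rarrow A\rarrow B\rarrow C\rarrow0$ with monomorphism $f\:A\rarrow B$ and $A,B\in\E'$, the map $f\oplus\id_{A^c}\:\bar A\rarrow B\oplus A^c$ is a monomorphism whose cokernel is still $C$, but its target need only lie in $\E'$. The resolution I propose is to embed that target into an object of $\E$ by a \emph{split} monomorphism: writing $\bar B\oplus\bar A=(B\oplus A^c)\oplus(B^c\oplus A)$ and setting $D=B^c\oplus A$, the inclusion $B\oplus A^c\rarrow\bar B\oplus\bar A$ is split with cokernel $D$. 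Composing it with $f\oplus\id_{A^c}$ yields a monomorphism $\bar A\rarrow\bar B\oplus\bar A$ between objects of $\E$ whose cokernel is $C\oplus D$; closure of $\E$ under cokernels of monomorphisms then gives $C\oplus D\in\E$, and hence $C\in\E'$.

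Finally, for condition~(v), let $t'\:T'\rarrow E'$ be an $\Add(T)$\+precover with $E'\in\E'$, and complete $E'$ to $\bar E=E'\oplus G\in\E$ with $G\in\E'$. Taking the canonical $\Add(T)$\+precover $t''\:T''\rarrow G$, the block-diagonal map $t'\oplus t''\:T'\oplus T''\rarrow\bar E$ is again an $\Add(T)$\+precover, now of an object of~$\E$; by condition~(v) for~$\E$ it is an epimorphism with kernel in~$\E$. Since $t'\oplus t''$ is epic and block diagonal, $t'$ is itself epic, and $\ker t'$ is a direct summand of $\ker(t'\oplus t'')\in\E$, hence lies in~$\E'$. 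This establishes~(v), completes the verification that $(T,\.\E')$ is an $\infty$\+tilting pair, and --- as noted in the first paragraph --- shows that $\E'$ is coresolving in~$\A$.
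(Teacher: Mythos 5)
Your proof is correct and takes essentially the same approach as the paper's: padding an extension with split exact sequences, padding a monomorphism with a split monomorphism so that both its source and target land in $\E$ (with the cokernel changing only by a summand already known to be in $\E$), and padding an $\Add(T)$\+precover with a precover of a complement are exactly the three devices used there. The only difference is that you spell out details the paper leaves terse.
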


\begin{proof}
The conditions~(i--iii) are obviously true for $\E'$.
To prove~(iv), suppose that we have an exact sequence
$0 \rarrow E'_1\rarrow E_1\rarrow E''_1\rarrow 0$ with
$E'_1$, $E''_1\in \E'$, i.e.\ there exist $E'_2$, $E''_2\in\A$
such that $E'=E'_1\oplus E'_2$ and $E''=E''_1\oplus E''_2$
belong to~$\E$.
Then $E_1\oplus E'_2\oplus E''_2$ is an extension of $E'$ by $E''$
in $\A$, and hence $E_1\in\E'$.
Similarly, if $f_1\: E'_1 \rarrow E_1$ is a monomorphism in $\A$
with $E'_1$, $E_1\in\E'$, then there is a split monomorphism
$f_2\: E'_2\rarrow E_2$ such that $f_1\oplus f_2$ is a monomorphism
in $\A$ between objects of~$\E$.
Finally, to prove~(v), it suffices to note that if $E = E_1 \oplus E_2
\in\E$ and if $t_1\: T_1 \rarrow E_1$ and $t_2\: T_2 \rarrow E_2$
are $\Add(T)$\+precovers, then also $t_1\oplus t_2\: T_1\oplus T_2
\rarrow E$ is an $\Add(T)$\+precover.
\end{proof}

\medskip

 Now we present the dual definitions.
 Let $\B$ be an abelian category with set-indexed coproducts and
a projective generator $P\in\B$.
 Let $W\in\B$ be an object and $\F\subset\B$ be a full subcategory.
 We will say that $(W,\.\F)$ is an \emph{$\infty$\+cotilting pair}
in $\B$ if the following conditions hold:
\begin{enumerate}
\renewcommand{\theenumi}{\roman{enumi}*}
\item $\B_\proj\subset\F$;
\item $\Prod(W)\subset\F$;
\item $\Ext_\B^1(F,W)=0$ for all $F\in\F$;
\item $\F$ is closed under the kernels of epimorphisms and
extensions in~$\B$;
\item every $\Prod(W)$\+preenvelope $F\rarrow W'$ of an object $F\in\F$
is a monomorphism in $\B$ with the cokernel belonging to~$\F$.
\end{enumerate}

 As above, it follows from the conditions~(i*--v*) that 
$$
 \Ext_\B^i(F,W)=0 \quad\text{for all $F\in\F$ and all integers $i>0$},
$$
the object $W\in\B$ is weakly cotilting, and the full subcategory
$\F\subset\B$ inherits an exact category structure from
the abelian category~$\B$.
 The exact category $\F$ has both enough projective and enough
injective objects; the full subcategories of projective and
injective objects in $\F$ are described as $\F_\proj=\B_\proj$ and
$\F_\inj=\Prod(W)$. Moreover, as before one also has:

\begin{lem} \label{cotilting-objects-cotilting-pairs-Karoubian-closure}
Let\/ $\B$ be a complete, cocomplete abelian category with
a projective generator. Then an object $W\in\B$ is a part of
an $\infty$\+cotilting pair $(W,\.\F)$ in\/ $\B$ if and only if it is
an $\infty$\+cotilting object.
The full subcategory\/ $\F=\F_\mx(W)$ is the maximal of all full
subcategories\/ $\F\subset\B$ forming an $\infty$\+cotilting pair
with $W\in\B$.

Moreover, if $(W,\.\F)$ is an $\infty$\+cotilting pair and\/ $\F'$ is
the closure of\/ $\F$ under direct summands, then $(W,\.\F')$ is also
an $\infty$\+cotilting pair and\/ $\F'$ is a resolving subcategory
of\/~$\B$.
\end{lem}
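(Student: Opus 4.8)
The plan is to prove both assertions entirely by duality, transporting the already-established $\infty$\+tilting results to the $\infty$\+cotilting setting via the opposite category $\B^\rop$. First I would observe that since $\B$ is a complete, cocomplete abelian category with a projective generator $P$, the opposite category $\B^\rop$ is again complete, cocomplete abelian, but now equipped with the injective cogenerator $P$; moreover the set-indexed coproducts in $\B^\rop$ (which are the products in $\B$) exist and are exact, exactly as demanded by the hypotheses preceding Lemma~\ref{tilting-objects-vs-tilting-pairs}. Thus $\B^\rop$ falls squarely within the $\infty$\+tilting framework of Section~\ref{infty-tilting-cotilting-secn}.

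The core of the argument is to set up the translation dictionary. Under the anti-equivalence $\B\simeq(\B^\rop)^\rop$, products become coproducts, $\Prod(W)$ becomes $\Add(W)$, projective objects become injective objects (so $\B_\proj$ becomes $(\B^\rop)_\inj$), and $\Ext^i_\B({-},{-})$ becomes $\Ext^i_{\B^\rop}({-},{-})$ with its arguments reversed. Consequently the weakly cotilting condition $\Ext^i_\B(W^X,W)=0$ becomes the weakly tilting condition $\Ext^i_{\B^\rop}(W,W^{(X)})=0$; conditions (i$_\mx^*$) and (ii$_\mx^*$) defining $\F_\mx(W)$ turn into precisely (i$_\mx$) and (ii$_\mx$), so that $\F_\mx(W)$ computed in $\B$ is $\E_\mx(W)$ computed in $\B^\rop$; the notion of an $\infty$\+cotilting object in $\B$ coincides with that of an $\infty$\+tilting object in $\B^\rop$; and the five conditions (i*)--(v*) for an $\infty$\+cotilting pair $(W,\.\F)$ in $\B$ are exactly the five conditions (i)--(v) for an $\infty$\+tilting pair $(W,\.\F)$ in $\B^\rop$. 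Here one uses that a $\Prod(W)$\+preenvelope in $\B$ is an $\Add(W)$\+precover in $\B^\rop$, and that a monomorphism with cokernel in $\F$ dualizes to an epimorphism with kernel in $\F$.

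With this dictionary in place, the first assertion is just Lemma~\ref{tilting-objects-vs-tilting-pairs} read off in $\B^\rop$: an object $W$ is part of an $\infty$\+cotilting pair if and only if it is $\infty$\+cotilting, and $\F_\mx(W)$ is the maximal such $\F$. For the ``moreover'' part I would invoke Lemma~\ref{Karoubian-closure-of-tilting-pair} in $\B^\rop$: the direct-summand closure $\F'$ of $\F$ again forms an $\infty$\+tilting pair in $\B^\rop$ and is coresolving there, which reads back in $\B$ as the statement that $(W,\.\F')$ is an $\infty$\+cotilting pair with $\F'$ resolving---a coresolving subcategory of $\B^\rop$ being precisely a resolving subcategory of $\B$.

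The hard part, such as it is, is not any single deduction but the bookkeeping of the translation dictionary: one must check that each structural notion---products versus coproducts, projectivity versus injectivity, precovers versus preenvelopes, and the variance of $\Ext$---is interchanged correctly, and in particular that the completeness, cocompleteness, and exactness-of-(co)products hypotheses survive passage to $\B^\rop$. Once this is verified, no further work remains, since Lemmas~\ref{wakamatsu-lemma}--\ref{wakamatsu-colemma} already supply the converse direction (that $(W,\.\F_\mx(W))$ is a pair) in both formulations. Alternatively, one could sidestep the opposite category and simply transcribe the discussion preceding Lemma~\ref{tilting-objects-vs-tilting-pairs} together with the proof of Lemma~\ref{Karoubian-closure-of-tilting-pair} line by line, replacing every notion by its dual; this yields the same conclusion but duplicates the earlier arguments.
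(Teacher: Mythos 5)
Your proposal is correct and is exactly the paper's proof: the paper disposes of this lemma with the single sentence ``This is dual to Lemmas~\ref{tilting-objects-vs-tilting-pairs} and~\ref{Karoubian-closure-of-tilting-pair},'' which is precisely your duality argument, with your careful dictionary (products/coproducts, $\Prod(W)$/$\Add(W)$, preenvelopes/precovers, resolving/coresolving) being the bookkeeping the paper leaves implicit. Your observation that the hypotheses self-dualize correctly --- $\B^\rop$ is complete, cocomplete abelian with an injective cogenerator, so the tilting-side lemmas apply verbatim --- is the one point that genuinely needs checking, and you check it.
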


\begin{proof}
This is dual to Lemmas~\ref{tilting-objects-vs-tilting-pairs}
and~\ref{Karoubian-closure-of-tilting-pair}.
\end{proof}

The $\infty$-tilting-cotilting correspondence from the last section now
extends to one between $\infty$-tilting and $\infty$-cotilting pairs.

\begin{prop} \label{tilting-cotilting-pairs-prop}
 In the context of Corollary~\textup{\ref{tilting-cotilting-cor}}
(see also Figure~\ref{tilting-cotilting-fig}),
the assignments\/ $\F=\Psi(\E)$ and\/ $\E=\Phi(\F)$ establish
a bijective correspondence between
\begin{enumerate}
\item the full subcategories\/ $\E\subset\E_\mx(T)$ forming
an\/ $\infty$\+tilting pair with $T\in\A$ and
\item the full subcategories\/ $\F\subset\F_\mx(W)$
forming an\/ $\infty$\+cotilting pair with $W\in\B$.
\end{enumerate}
\end{prop}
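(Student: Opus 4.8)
The plan is to exploit the fact that, by Theorem~\ref{tilting-cotilting}, the restrictions of $\Psi$ and $\Phi$ are mutually inverse equivalences of exact categories $\E_\mx(T)\simeq\F_\mx(W)$. In particular, the assignments $\E\mapsto\Psi(\E)$ and $\F\mapsto\Phi(\F)$ are already mutually inverse bijections between the isomorphism-closed full subcategories of $\E_\mx(T)$ and those of $\F_\mx(W)$, so the only thing left to check is that this bijection matches $\infty$\+tilting pairs with $\infty$\+cotilting pairs. A covariant equivalence of exact categories preserves and reflects short exact sequences, projective objects, and injective objects; moreover $\Psi$ carries the projective objects $\Add(T)$ of $\E_\mx(T)$ to the projective objects $\B_\proj$ of $\F_\mx(W)$ and the injective objects $\A_\inj$ to the injective objects $\Prod(W)$ (Theorem~\ref{tilting-cotilting}). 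Hence conditions~(i) and~(ii) for $\E$ translate verbatim into conditions~(ii*) and~(i*) for $\F=\Psi(\E)$, while conditions~(iii) and~(iii*) are automatic, being consequences of~(i$_\mx$) for objects of $\E_\mx(T)$ and of~(i$_\mx^{\textstyle*}$) for objects of $\F_\mx(W)$, respectively. It remains to match~(iv)+(v) with~(iv*)+(v*).

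The key step is to reformulate~(iv)+(v) intrinsically inside the exact category $\E_\mx(T)$. First I would record two facts about this exact structure: every monomorphism in $\A$ between objects of $\E_\mx(T)$ is an admissible monomorphism of $\E_\mx(T)$, since its cokernel lies in $\E_\mx(T)$ by Lemma~\ref{wakamatsu-lemma}(b); and every $\Add(T)$\+precover $t\:T'\rarrow E$ of an object $E\in\E_\mx(T)$ is automatically an admissible epimorphism, since it is epic (the natural precover $T^{(\Hom_\A(T,E))}\rarrow E$ is epic because $E$ admits an epic precover by~(ii$_\mx$), and it factors through~$t$) with kernel in $\E_\mx(T)$ by Lemma~\ref{wakamatsu-lemma}(c). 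I would then prove that, for a full subcategory $\E\subset\E_\mx(T)$ containing $\Add(T)$ and $\A_\inj$ and closed under extensions, conditions~(iv)+(v) are equivalent to $\E$ being closed under both cokernels of admissible monomorphisms and kernels of admissible epimorphisms in $\E_\mx(T)$.

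I expect the nontrivial implication here — deducing closure under kernels of admissible epimorphisms from~(iv)+(v) — to be the main obstacle, and I would settle it by a diagram chase. Given an admissible short exact sequence $0\rarrow K\rarrow E_1\rarrow E_2\rarrow0$ in $\E_\mx(T)$ with $E_1$, $E_2\in\E$, choose an $\Add(T)$\+precover $t\:T'\rarrow E_1$; its kernel $Z_1$ lies in $\E$ by~(v). The composite $T'\rarrow E_1\rarrow E_2$ is again an $\Add(T)$\+precover, since surjectivity of $\Hom_\A(T,{-})$ is preserved under composition, so its kernel $Z_2$ also lies in $\E$ by~(v). A snake-lemma comparison of the two precovers yields a short exact sequence $0\rarrow Z_1\rarrow Z_2\rarrow K\rarrow0$, which exhibits $K$ as the cokernel of an admissible monomorphism between objects of $\E$; hence $K\in\E$ by~(iv). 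The converse is immediate: an $\Add(T)$\+precover of $E\in\E$ is an admissible epimorphism with projective source in $\Add(T)\subset\E$, so its kernel lies in $\E$ as soon as $\E$ is closed under kernels of admissible epimorphisms. The remaining equivalences in the reformulation are routine: closure under cokernels of admissible monomorphisms is exactly the cokernel part of~(iv), and conversely a precover of $E\in\E$ is epic with kernel in $\E$ whenever $\E$ enjoys both closure properties.

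Finally I would invoke the exactly dual reformulation on the cotilting side: for $\F\subset\F_\mx(W)$ containing $\B_\proj$ and $\Prod(W)$ and closed under extensions, conditions~(iv*)+(v*) are equivalent to $\F$ being closed under both kernels of admissible epimorphisms and cokernels of admissible monomorphisms in $\F_\mx(W)$ (the cokernel-closure now being the one extracted from~(iv*)+(v*) by the dual diagram chase). The resulting intrinsic description is symmetric — closure under cokernels of admissible monomorphisms and kernels of admissible epimorphisms — and is therefore preserved and reflected by the exact equivalence $\Psi$, which moreover sends $\Add(T)$ to $\B_\proj$ and $\A_\inj$ to $\Prod(W)$. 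Combining this with the bijection noted in the first paragraph, it follows that $(T,\.\E)$ is an $\infty$\+tilting pair if and only if $(W,\.\Psi(\E))$ is an $\infty$\+cotilting pair, which proves the proposition.
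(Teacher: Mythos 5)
Your proposal is correct, and its key computation is the same as the paper's: compose an $\Add(T)$\+precover of $E_1$ with the admissible epimorphism $E_1\rarrow E_2$, apply condition~(v) to both precovers, and compare kernels via the snake lemma to obtain $0\rarrow Z_1\rarrow Z_2\rarrow K\rarrow0$, so that (iv) finishes the job. The organization, however, is genuinely different. The paper verifies the conditions (i*)--(v*) for $\F=\Psi(\E)$ one at a time, working on the $\B$\+side; in particular, (v*) is proved there by a separate argument: a $\Prod(W)$\+preenvelope is written as $\Psi(e)$ with $e\:E\rarrow J^0$, the morphism~$e$ is shown to be a monomorphism because $J$ is an injective cogenerator of~$\A$, and then (iv) is applied. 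The self-dual reformulation that you make the engine of your proof --- namely that, for an extension-closed $\E$ containing $\Add(T)$ and $\A_\inj$, conditions (iv)+(v) are equivalent to closure under cokernels of admissible monomorphisms and kernels of admissible epimorphisms in the exact category $\E_\mx(T)$ --- appears in the paper only as a concluding summary sentence, not as a lemma that drives the argument. Your route buys two things: the two directions of the correspondence and its bijectivity become completely symmetric (the paper's written proof carries out only the tilting-to-cotilting direction, leaving the converse to duality and to the fact that $\Phi$, $\Psi$ are mutually inverse), and the injective-cogenerator argument for (v*) is replaced by the dual diagram chase. The paper's route is more concrete, exhibiting each condition on the $\B$\+side directly. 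Your two supporting facts --- that monomorphisms in $\A$ between objects of $\E_\mx(T)$ are admissible (Lemma~\ref{wakamatsu-lemma}(b)), and that $\Add(T)$\+precovers of objects of $\E_\mx(T)$ are admissible epimorphisms (epicity via (ii$_\mx$), kernel in $\E_\mx(T)$ by Lemma~\ref{wakamatsu-lemma}(c)) --- are exactly the observations recorded in the paper right after Lemma~\ref{wakamatsu-lemma}, so no prerequisite is missing.
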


\begin{proof}
 Let $(T,\.\E)$ be an $\infty$\+tilting pair in the category~$\A$.
 Put $\F=\Psi(\E)$.
 We have to show that $(W,\.\F)$ is an $\infty$\+cotilting pair
in the category~$\B$.

 Indeed, the condition~(i*) follows from~(ii) and the condition~(ii*)
follows from~(i), as $\B_\proj=\Psi(\Add(T))$ and
$\Prod(W)=\Psi(\A_\inj)$.
 The condition~(iii*) holds, since $\F=\Psi(\E)\subset\Psi(\E_\mx(T))
=\F_\mx(W)$ and all $F\in\F_\mx(W)$ satisfy~(iii*).

 The full subcategory $\F$ is closed under extensions in $\F_\mx(W)$,
since $\Psi\:\E_\mx(T)\rarrow\F_\mx(W)$ is an equivalence of exact
categories and the full subcategory $\E$ is closed under extensions
in $\E_\mx(W)$.
 Since the full subcategory $\F_\mx(W)$ is closed under extensions
in $\B$ by Lemma~\ref{wakamatsu-colemma}(a), it follows that
$\F$ is closed under extensions in~$\B$.

 Let $f\:F'\rarrow F''$ be an epimorphism in $\B$ between two objects
$F'$, $F''\in\F$.
 Then there exists a morphism $e\:E'\rarrow E''$ in $\E$ such that
$F^{(s)}\simeq\Psi(E^{(s)})$, \,$s=1$,~$2$, and $f=\Psi(e)$.
 The map of abelian groups $\Hom_\E(T,e)$ is surjective, since the map
$\Hom_\F(P,f)$ is and $P=\Psi(T)$.
 Let $T_0\rarrow E'$ be an $\Add(T)$\+precover; then the composition
$T_0\rarrow E'\rarrow E''$ is also an $\Add(T)$\+precover.
 Denote the kernels of the morphisms $T_0\rarrow E'$ and
$T_0\rarrow E''$ by $Z'_0$ and~$Z''_0$, respectively.
 Then $Z'_0$, $Z''_0\in\E$ by the condition~(v) and the natural morphism
$Z'_0\rarrow Z''_0$ is a monomorphism.
 Hence $\ker(e)=\coker(Z'_0\to Z''_0)\in\E$ by the condition~(iv) and
$\ker(f)=\Psi(\ker e)\in\F$.
 This proves that the full subcategory $\F\subset\B$ is closed under
the kernels of epimorphisms in~$\B$ and finishes the proof of 
the condition~(iv*).

 To prove the condition~(v*), let $f\:F\rarrow W^0$ be
a $\Prod(W)$\+preenvelope of an object $F\in\F$.
 Then $f=\Psi(e)$, where $e\:E\rarrow J^0$ is a morphism in $\E$
and $J^0\in\A_\inj$.
 The map $\Hom_\E(e,J)$ is surjective, since the map $\Hom_\F(f,W)$ is
and $W=\Psi(J)$.
 Since $J$ is an injective cogenerator of $\A$, it follows that $e$~is
a monomorphism in~$\A$.
 By the condition~(iv), the cokernel of~$e$ belongs to~$\E$, so
$e$~is a monomorphism in~$\E$.
 Since the functor $\Psi|_\E$ is exact, it follows that $f$~is
a monomorphism in $\B$ with the cokernel belonging to~$\F$.

 To summarize these arguments, the conditions~(iv\+v) essentially
say that the full subcategory $\E$ is closed under the cokernels of
monomorphisms, extensions, and kernels of epimorphisms in
$\E_\mx(T)$, while the conditions~(iv*\+v*) mean that the full
subcategory $\F$ is closed under the kernels of epimorphisms,
extensions, and cokernels of monomorphisms in $\F_\mx(W)$. 
\end{proof}

\begin{cor} \label{tilting-cotilting-correspondence-cor}
 The constructions of
Theorems~\textup{\ref{tilting-cotilting}}\+-%
\textup{\ref{cotilting-tilting}} and
Proposition~\textup{\ref{tilting-cotilting-pairs-prop}} establish
a one-to-one correspondence between equivalence classes of
\begin{enumerate}
\item quadruples $(\A,\E,T,J)$, where\/ $\A$ is a complete, cocomplete
abelian category with an injective cogenerator $J$ and
$(T,\.\E)$ is an\/ $\infty$\+tilting pair in\/~$\A$, and
\item quadruples $(\B,\F,P,W)$, where\/ $\B$ is a complete, cocomplete
abelian category with a projective generator $P$ and
$(W,\.\F)$ is an\/ $\infty$\+cotilting pair in\/~$\B$.
\end{enumerate}
In this correspondence, the exact categories\/ $\E$ and\/ $\F$ are
naturally equivalent, $\E\simeq\F$, and the equivalence identifies
$T$ with $P$ and $W$ with $J$.  \qed
\end{cor}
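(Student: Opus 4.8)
The plan is to assemble the statement from the two correspondence results already in place: Corollary~\ref{tilting-cotilting-cor} at the level of triples, and Proposition~\ref{tilting-cotilting-pairs-prop} at the level of the distinguished full subcategories. Since the present corollary merely superimposes a correspondence of subcategories on top of an existing correspondence of triples, no genuinely new construction is needed; the work consists in checking that the two layers are compatible and that the composite assignments are mutually inverse up to equivalence.

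First I would start from a quadruple $(\A,\E,T,J)$ of type~(1). By Lemma~\ref{tilting-objects-vs-tilting-pairs}, the hypothesis that $(T,\.\E)$ is an $\infty$\+tilting pair forces $T$ to be an $\infty$\+tilting object and places $\E$ inside $\E_\mx(T)$. Feeding the triple $(\A,T,J)$ into Theorem~\ref{tilting-cotilting} produces $\B=\sigma_T(\A)$, the adjoint pair $(\Phi,\Psi)$, the projective generator $P=\Psi(T)$, the $\infty$\+cotilting object $W=\Psi(J)$, and the quasi-inverse equivalences of exact categories $\Psi|_{\E_\mx(T)}$ and $\Phi|_{\F_\mx(W)}$ between $\E_\mx(T)$ and $\F_\mx(W)$. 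Setting $\F=\Psi(\E)$ and invoking Proposition~\ref{tilting-cotilting-pairs-prop}, the image $\F$ is a full subcategory of $\F_\mx(W)$ forming an $\infty$\+cotilting pair with $W$, so that $(\B,\F,P,W)$ is a quadruple of type~(2). The opposite passage is the dual one: given $(\B,\F,P,W)$, Theorem~\ref{cotilting-tilting} returns the triple $(\A,T,J)$, and Proposition~\ref{tilting-cotilting-pairs-prop} in the form $\E=\Phi(\F)$ returns the tilting pair.

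Next I would verify that the two passages are mutually inverse up to equivalence. At the level of triples this is exactly Corollary~\ref{tilting-cotilting-cor}, while the bijectivity of the subcategory layer $\E\mapsto\Psi(\E)$, $\F\mapsto\Phi(\F)$ is exactly Proposition~\ref{tilting-cotilting-pairs-prop}, which is stated precisely in the context of that corollary. The two layers glue without friction because $\Psi|_{\E_\mx(T)}$ and $\Phi|_{\F_\mx(W)}$ are quasi-inverse equivalences, whence $\Phi(\Psi(\E))\simeq\E$ and $\Psi(\Phi(\F))\simeq\F$. The ``moreover'' clause is then read off directly: the equivalence $\E\simeq\F$ is the restriction of the exact equivalence $\Psi|_{\E_\mx(T)}$, and the identifications $T\leftrightarrow P$ and $W\leftrightarrow J$ are exactly those already furnished by Theorems~\ref{tilting-cotilting} and~\ref{cotilting-tilting}, under which the $\infty$\+tilting object corresponds to the projective generator and the injective cogenerator to the $\infty$\+cotilting object.

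The only point demanding care — and hence the expected main obstacle — is pinning down the meaning of ``equivalence classes of quadruples'' and confirming that the composite constructions are the identity on such classes. Concretely, one must fix an equivalence of quadruples to mean an equivalence of the underlying abelian categories carrying $\E$ to $\E$, $T$ to $T$, and $J$ to $J$ up to isomorphism, and then check that the canonical natural isomorphisms produced by Corollary~\ref{tilting-cotilting-cor} at the triple level are compatible with the subcategory equalities $\F=\Psi(\E)$ and $\E=\Phi(\F)$. Once this bookkeeping of identifications is set up, every required compatibility becomes a formal consequence of the equivalences of exact categories established in Theorem~\ref{tilting-cotilting}, and no further computation is required.
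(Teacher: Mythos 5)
Your proposal is correct and matches the paper's own (implicit) proof: the corollary is stated with a qed-symbol and no written argument precisely because it is the immediate superposition of Corollary~\ref{tilting-cotilting-cor} at the level of triples with Proposition~\ref{tilting-cotilting-pairs-prop} at the level of the distinguished subcategories, the final clause being read off from Theorems~\ref{tilting-cotilting} and~\ref{cotilting-tilting}. Your extra bookkeeping about the precise meaning of equivalence of quadruples is sensible, but it goes beyond anything the paper makes explicit.
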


In general, there can be many classes $\E$ which form an 
$\infty$-tilting pair with a given $\infty$-tilting object
$T \in \A$. Thanks to the following lemma, we know that
they form a complete lattice.

\begin{lem} \label{lattice-of-tilting-classes}
Let\/ $\A$ be a complete, cocomplete abelian category with
an injective cogenerator and let $T\in\A$ be an $\infty$\+tilting object.
If\/ $\E_i\subset\A$, $i\in I$, is a collection of full subcategories
such that $(T,\.\E_i)$ is an $\infty$\+tilting pair for each $i\in I$,
then $(T,\.\E)$ is an $\infty$\+tilting pair
with\/ $\E=\bigcap_{i\in I}\E_i$.

Dually, if\/ $\B$ is a complete, cocomplete abelian category with
a projective generator, $W\in\B$ is an $\infty$\+cotilting object and
$(W,\.\F_j)$ are $\infty$\+cotilting pairs, $j\in J$, then $(W,\.\F)$
is an $\infty$\+cotilting pair with\/ $\F=\bigcap_{j\in J}\F_j$.
\end{lem}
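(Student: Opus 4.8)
The plan is to verify directly that $\E=\bigcap_{i\in I}\E_i$ satisfies the five defining conditions~(i)--(v) of an $\infty$\+tilting pair with $T$, using throughout that $E\in\E$ holds precisely when $E\in\E_i$ for every $i\in I$. I take $I$ to be nonempty; the empty intersection is all of $\A$, which fails~(iii) in general, and in the lattice of $\infty$\+tilting classes the empty meet should be taken to be the top element $\E_\mx(T)$ of Lemma~\ref{tilting-objects-vs-tilting-pairs} instead.

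Conditions~(i) and~(ii) are immediate, as $\A_\inj$ and $\Add(T)$ lie in each $\E_i$ and hence in the intersection. Condition~(iii) is inherited because $\E\subset\E_i$ for any one fixed~$i$, so every $E\in\E$ already satisfies $\Ext_\A^1(T,E)=0$. For~(iv) I would run each closure operation inside a single $\E_i$ and then intersect: given a short exact sequence whose two relevant terms lie in $\E$, those terms lie in every $\E_i$, so closure of each $\E_i$ under cokernels of monomorphisms (respectively, extensions) forces the third term into every $\E_i$, hence into~$\E$.

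The one condition meriting a second look is~(v), although no real difficulty arises. The key point is that being an $\Add(T)$\+precover and being an epimorphism are intrinsic properties of a morphism $T'\rarrow E$ in $\A$, independent of any ambient subcategory. Hence for $E\in\E$ and any $\Add(T)$\+precover $T'\rarrow E$, condition~(v) applied simultaneously in every $(T,\.\E_i)$ guarantees both that this one morphism is an epimorphism and that its kernel lies in each $\E_i$, so the kernel lies in~$\E$. This completes the verification that $(T,\.\E)$ is an $\infty$\+tilting pair, and the dual statement for $\infty$\+cotilting pairs follows formally by passing to the opposite situation.

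I anticipate no genuine obstacle. Each condition is stable under intersection once one records the equivalence $E\in\E\iff E\in\E_i\ (\forall i)$ together with the ambient\+independence of the precover and epimorphism notions in~(v); the only bookkeeping to keep in mind is that both input objects of every closure operation in~(iv)--(v) must lie in \emph{all} the $\E_i$, which is exactly what pins the output down inside the intersection.
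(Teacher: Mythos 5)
Your proof is correct and takes essentially the same approach as the paper, whose entire proof is the one-line remark that each of the conditions~(i)--(v) and~(i*)--(v*) is preserved by intersections of classes. Your extra bookkeeping---noting that the precover and epimorphism notions in~(v) are intrinsic to $\A$ and so independent of the ambient class, and that the empty intersection should be read as $\E_\mx(T)$ rather than all of~$\A$---is exactly the verification the paper leaves implicit.
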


\begin{proof}
It is straightforward to check that each of the conditions~(i--v)
and~(i*--v*) is preserved by intersections of classes.
\end{proof}

\begin{ex} \label{minimal-pairs-ex}
 In particular, whenever $T$ is an $\infty$\+tilting object
in~$\A$, there exists a unique minimal full subcategory
$\E_\mn(T)\subset\A$ for which $(T,\.\E_\mn(T))$ is an
$\infty$\+tilting pair in~$\A$.
 In fact, the full subcategory $\E_\mn(T)$ consists of all the
objects in $\A$ that can be obtained from the objects of
$\A_\inj\subset \E_\mn(\A)$ and $\Add(T)\subset\E_\mn(\A)$ by
applying iteratively the operations of the passage to
the cokernel of a monomorphism, an extension, or the kernel
of an $\Add(T)$\+precover.
 For every $\infty$\+tilting pair $(T,\.\E)$ in $\A$, one then has
$\E_\mn(T)\subset\E$.

Similarly, whenever $W$ is an $\infty$\+cotilting
object in~$\B$, there exists a unique minimal full subcategory
$\F_\mn(W)\subset\B$ such that $(W,\.\F_\mn(W))$ is
an $\infty$\+cotilting pair in~$\B$.
 For every $\infty$\+cotilting pair $(W,\.\F)$ in $\B$, one has
$\F_\mn(W)\subset\F$.

 In the situation of Corollary~\ref{tilting-cotilting-cor}
(and Figure~\ref{tilting-cotilting-fig}), the full
subcategories $\E_\mn(T)\subset\A$ and $\F_\mn(W)\subset\B$
are transformed into each other by the functors $\Psi$ and $\Phi$,
that is $\F_\mn(W)=\Psi(\E_\mn(T))$ and $\E_\mn(T)=\Phi(\F_\mn(W))$.
\end{ex}

\begin{rem}
 There is a certain similarity between our results in
Sections~\ref{co-tilted-categories}\+-\ref{infty-til-cotil-pairs-secn}
of this paper and those in the recent paper~\cite[Sections~2\+-3]{En}.
 Let us explain the connection and the differences between our
approaches.
 The paper~\cite{En} is a far-reaching development of the traditional
point of view in Wakamatsu tilting theory, in which finitely generated
modules over Artinian algebras are the main objects of study.
 The author of~\cite{En} works with skeletally small exact categories,
and essentially never considers infinite products or coproducts.
 The definition of a projective generator in~\cite[paragraph before
Corollary~2.14]{En} presumes an exact category with enough
projective objects in which every projective object is a direct summand
of a \emph{finite} direct sum of copies of the (single) generator.

 Nevertheless, the generality level in~\cite[Sections~2\+-3]{En}
exceeds that of our exposition.
 In particular, our Lemma~\ref{wakamatsu-colemma} is but
a particular case of~\cite[Proposition~3.2]{En} (while our
Lemma~\ref{wakamatsu-lemma} is dual).
 The author of~\cite{En} achieves this generality by working with
arbitrary (skeletally small) additive categories in place of our classes
$\Add(T)$ and $\Prod(W)$.
 An exact category playing the role of our $\F$ is generally denoted
by~$\mathcal E$ in~\cite{En}, an additive category playing the role
of our $\Add(T)=\F_\proj=\B_\proj$ is denoted by $\mathcal C$,
an additive category in the role of our $\Prod(W)=\F_\inj$
is denoted by $\mathcal W$, and the exact category in the role
of our $\F_\mx(W)$ is denoted by~$\mathsf X_{\mathcal W}$.
 (The reader should be warned that the author of~\cite{En} calls
``Wakamatsu tilting'' what we would call ``Wakamatsu cotilting''
or ``$\infty$\+cotilting''.)

 Finally, in the role of our abelian category $\B$, the author
of~\cite{En} has an exact category which he denotes by
$\mathsf{mod}\,\mathcal C$.
 This difference occurs because our observation that the category
$\Add(T)$ always has weak kernels (as pointed out in
the proof of Theorem~\ref{add-prod-theorem}) has
no counterpart in~\cite{En}.
\end{rem}

\Section{\texorpdfstring{$\infty$}{Infinity}-Tilting-Cotilting Derived Equivalences}
\label{derived-equivalence-secn}

\begin{figure}
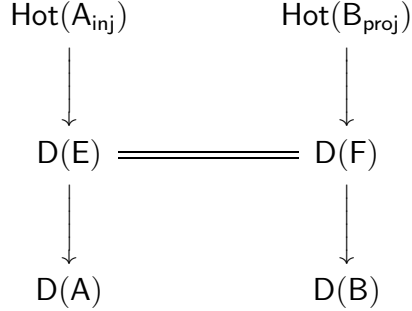

\[
\begin{diagram}
\node{\Hot(\A_\inj)}\arrow{s}\node[2]{\Hot(\B_\proj)}\arrow{s} \\
\node{\D(\E)}\arrow{s}\arrow[2]{e,=}\node[2]{\D(\F)}\arrow{s} \\
\node{\D(\A)}\node[2]{\D(\B)}
\end{diagram}
\]
\caption{Equivalences induced by $\infty$-tilting/$\infty$-cotilting pairs in general.}%
\label{equivalences-hotinj-fig}%
\end{figure}

 Unlike in~\cite[Sections~3\+-5]{PS}, in our present situation
the coresolution dimensions of objects of the category $\A$ with
respect to its coresolving subcategory $\E$ or $\E_\mx(T)$ can well be
infinite, and so can the resolution dimensions of objects of the category
$\B$ with respect to its resolving subcategory $\F$ or $\F_\mx(W)$.
 Hence the equivalence of exact categories $\E\simeq\F$ does not
generally lead to any equivalence between the derived categories
$\D(\A)$ and~$\D(\B)$.
 All one can say is that there is the commutative diagram
formed by triangulated functors and a triangulated equivalence in
Figure~\ref{equivalences-hotinj-fig}.
 Here $\Hot(\A_\inj)$ and $\Hot(\B_\proj)$ are the homotopy categories
of (unbounded complexes in) the additive categories $\A_\inj$
and~$\B_\proj$, while $\D(\E)$ and $\D(\F)$ are the (unbounded) derived
categories of the exact categories $\E$ and $\F$, and $\D(\A)$ and
$\D(\B)$ are the similar derived categories of the abelian
categories $\A$ and~$\B$.

 If $\A$ is a Grothendieck category, the canonical functor
$\Hot(\A_\inj) \rarrow \D(\A)$ in the left-hand side column
of the above diagram is a Verdier quotient functor.
This follows e.g.\ from~\cite[Theorem~5.4]{AJS}.
 If the full subcategories $\E\subset\A$ and $\F\subset\B$
have additional closure properties, we will obtain a similar diagram
below where all the functors are Verdier quotients. 

One issue here is that, unlike for tilting modules of finite projective
dimension, the class $\E$ in the definition of a tilting pair need not
be closed under coproducts in $\A$ (cf.~\cite[Lemma~5.3]{PS}).
Dually, the class $\F$ need not be closed under products.
There are some elementary relations between the closure properties
of $\E$ and $\F$, however.

\begin{lem} \label{co-products-preservation}
 In the context of
Corollary~\textup{\ref{tilting-cotilting-correspondence-cor}},
if the full subcategory\/ $\E\subset\A$ is closed under products,
then the full subcategory\/ $\F\subset\B$ is closed under products.
 If the full subcategory\/ $\F\subset\B$ is closed under coproducts,
then the full subcategory\/ $\E\subset\A$ is closed under coproducts.
\end{lem}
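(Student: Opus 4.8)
The plan is to read both assertions directly off the adjunction $\Phi\dashv\Psi$ of Theorem~\ref{tilting-cotilting}, combined with the defining relations $\F=\Psi(\E)$ and $\E=\Phi(\F)$ furnished by Proposition~\ref{tilting-cotilting-pairs-prop}. First I would recall that $\Psi\:\A\rarrow\B$, being a right adjoint, preserves all products, whereas $\Phi\:\B\rarrow\A$, being a left adjoint, preserves all coproducts; these products and coproducts are taken in the complete and cocomplete ambient abelian categories $\A$ and $\B$. I would also use that $\Psi|_{\E_\mx(T)}$ and $\Phi|_{\F_\mx(W)}$ are mutually inverse equivalences, so that $\Phi\Psi\simeq\id$ on $\E_\mx(T)\supseteq\E$ and $\Psi\Phi\simeq\id$ on $\F_\mx(W)\supseteq\F$.

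For the first assertion, I would assume $\E$ is closed under products and take a family $(F_i)_{i\in I}$ in $\F$. Setting $E_i=\Phi(F_i)$, we have $E_i\in\Phi(\F)=\E$ and $F_i\simeq\Psi(E_i)$, so preservation of products by $\Psi$ gives $\prod_i F_i\simeq\prod_i\Psi(E_i)\simeq\Psi\bigl(\prod_i E_i\bigr)$. Since $\prod_i E_i\in\E$ by hypothesis, this product lies in $\Psi(\E)=\F$, as required. The second assertion I would prove in the dual manner: assuming $\F$ closed under coproducts and taking a family $(E_i)_{i\in I}$ in $\E$, I would set $F_i=\Psi(E_i)\in\F$, so that $E_i\simeq\Phi(F_i)$, and then use preservation of coproducts by $\Phi$ to obtain $\coprod_i E_i\simeq\Phi\bigl(\coprod_i F_i\bigr)\in\Phi(\F)=\E$.

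I expect no genuine obstacle here; the statement is a formal consequence of the adjunction. The one point worth keeping in mind is that the products and coproducts appearing in the statement are those of the ambient abelian categories $\A$ and $\B$, rather than of the exact subcategories $\E$ and $\F$, and it is exactly for these ambient (co)limits that the preservation properties of the adjoint pair $(\Phi,\Psi)$ are available. Note also that the two implications are genuinely one-directional in the stated form: closure of $\E$ under products transfers to $\F$, and closure of $\F$ under coproducts transfers to $\E$, precisely because $\Psi$ need not preserve coproducts, nor $\Phi$ products.
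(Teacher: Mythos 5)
Your proof is correct and takes essentially the same approach as the paper: the paper's own (two-line) proof simply observes that $\F=\Psi(\E)$ with $\Psi$ preserving products as a right adjoint, and $\E=\Phi(\F)$ with $\Phi$ preserving coproducts as a left adjoint. Your version only makes explicit the choice of preimages $E_i=\Phi(F_i)$ via the inverse equivalence, a detail the paper leaves implicit.
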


\begin{proof}
 The first assertion holds, since $\F=\Psi(\E)$ and the functor
$\Psi\:\A\rarrow\B$ preserves products (see Figure~\ref{tilting-cotilting-fig}).
 The second assertion holds, since $\E=\Phi(\F)$ and the functor
$\Phi\:\B\rarrow\A$ preserves coproducts.
\end{proof}

 It would be interesting to know whether the converse assertions
to those of Lemma~\ref{co-products-preservation} are true.

Suppose now that $\E$ is a part of an $\infty$\+tilting pair in
a complete, cocomplete abelian category $\A$ with an injective cogenerator.
Then $\A$ has exact coproducts (\cite[Exercise~III.2]{Mit}) and,
if $\E$ is closed under coproducts,
$\E$ has exact coproducts too. In such a situation the following definition
from~\cite[Sections~2.1 and~4.1]{Psemi} or~\cite[Section~A.1]{Pcosh}
applies and gives a more adequate replacement of
$\Hot(\A_\inj) = \Hot(\E_\inj)$ in Figure~\ref{equivalences-hotinj-fig}.

If $\E$ is an exact category with arbitrary coproducts which are exact,
we call a complex \emph{coacyclic} if it belongs to the smallest
localizing subcategory of $\Hot(\E)$ which contains the total complexes
of short exact sequences of complexes over~$\E$.
The \emph{coderived category} of $\E$, which we denote by
$\D^\co(\E)$, is defined as the Verdier quotient category of $\Hot(\E)$ by
the subcategory of coacyclic complexes.

Note that it follows from the above definition that each coacyclic
complex is exact and, thus, we have a Verdier quotient functor
$\D^\co(\E) \rarrow \D(\E)$. On the other hand, if $\E$ in addition
has enough injectives, the natural functor
$\Hot(\E_\inj) \rarrow \D^\co(\E)$ is fully faithful
by~\cite[Lemma A.1.3]{Pcosh}.
To summarize, we have triangulated functors
\[
\xymatrix@1{ \Hot(\E_\inj) \; \ar@{>->}[r] & \; \D^\co(\E) \; \ar@{->>}[r] & \; \D(\E), }
\]
where the first functor is fully faithful and the second one is
a Verdier quotient. In fact, the fully faithful functor was proved to be
an equivalence in some cases~\cite[Theorem~2.4]{PcohSGdual}.

If $\F$ is an exact category with arbitrary products which are exact,
the class of \emph{contraacyclic} complexes in $\Hot(\F)$ and
the \emph{contraderived} category $\D^\ctr(\F)$ of $\F$ are defined
dually, and we have triangulated functors
\[
\xymatrix@1{ \Hot(\F_\proj) \; \ar@{>->}[r] & \; \D^\ctr(\F) \; \ar@{->>}[r] & \; \D(\F). }
\]
As above, the fully faithful functor in the leftmost arrow is known to be
an equivalence in some cases~\cite[Theorem~4.4(b)]{PcohSGdual}.

Now we can state the main result of the section (see also
Figure~\ref{equivalences-coderived-fig} below).

\begin{prop} \label{co-products-closed-exact-subcategory}
\textup{(a)} Let\/ $\A$ be an exact category where set-indexed
coproducts exist and are exact, and let\/ $\E\subset\A$ be
a coresolving subcategory closed under coproducts.
 Then the functor between the coderived categories\/
$\D^\co(\E)\rarrow\D^\co(\A)$ induced by the embedding of exact
categories\/ $\E\rarrow\A$ is a triangulated equivalence.
 The triangulated functor between the conventional derived
categories\/ $\D(\E)\rarrow\D(\A)$ induced by the same exact
embedding is a Verdier quotient functor. \par
\textup{(b)} Let\/ $\B$ be an exact category where set-indexed
products exist and are exact, and let\/ $\F\subset\B$ be
a resolving subcategory closed under products.
 Then the functor between the contraderived categories\/
$\D^\ctr(\F)\rarrow\D^\ctr(\B)$ induced by the embedding of exact
categories\/ $\F\rarrow\B$ is a triangulated equivalence.
 The triangulated functor between the conventional derived
categories\/ $\D(\F)\rarrow\D(\B)$ induced by the same exact
embedding is a Verdier quotient functor.
\end{prop}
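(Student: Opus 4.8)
I will prove part~(a); part~(b) is formally dual, with products, resolving subcategories, and the contraderived category in place of coproducts, coresolving subcategories, and the coderived category. The plan is to first show that the induced functor $\D^\co(\E)\rarrow\D^\co(\A)$ is a triangulated equivalence, and then to deduce the statement about the conventional derived categories as a formal consequence.

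That the functor is well defined is clear: a short exact sequence in the exact category $\E$ is in particular one in $\A$, so the embedding $\Hot(\E)\rarrow\Hot(\A)$ takes coacyclic complexes to coacyclic complexes. The workhorse will be a \emph{totalization lemma}: in an exact category with exact coproducts, the coproduct-totalization of any exact sequence of complexes $0\rarrow B^0\rarrow B^1\rarrow B^2\rarrow\dotsb$ that is bounded below in the outer index is coacyclic. One proves this by decomposing the sequence into short exact sequences of complexes $0\rarrow C^k\rarrow B^{k+1}\rarrow C^{k+1}\rarrow0$, whose totalizations are coacyclic by definition, and realizing the total complex as a telescope of the partial totalizations, using that the coacyclic complexes form a localizing subcategory. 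Granting this, essential surjectivity of $\D^\co(\E)\rarrow\D^\co(\A)$ is immediate: for $A^\bullet\in\Hot(\A)$, the cogenerating property of $\E$ together with its closure under cokernels of admissible monomorphisms produces a Cartan--Eilenberg $\E$\+coresolution, that is, an exact sequence of complexes $0\rarrow A^\bullet\rarrow E^{\bullet,0}\rarrow E^{\bullet,1}\rarrow\dotsb$ in which each $E^{\bullet,j}$ is a complex over $\E$. Since $\E$ is closed under coproducts, the coproduct-totalization $E^\bullet$ is again a complex over $\E$, and the totalization lemma shows that the cone of the augmentation $A^\bullet\rarrow E^\bullet$ is coacyclic; hence $A^\bullet\simeq E^\bullet$ in $\D^\co(\A)$.

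The crux is full faithfulness, and I would reduce it to the single \emph{detection} statement that a complex over $\E$ which is coacyclic in $\A$ is already coacyclic in $\E$. Granting detection, a short roof-calculus argument settles full faithfulness: if a morphism of complexes over $\E$ dies in $\D^\co(\A)$, one postcomposes with a morphism having coacyclic cone, replaces its target by an $\E$\+coresolution (keeping the cone coacyclic in $\A$ by the octahedral axiom), and then uses detection to see that this cone is coacyclic in $\E$, so the morphism already dies in $\D^\co(\E)$; surjectivity on Hom groups is handled symmetrically, representing roofs over $\A$ by roofs over $\E$. Detection is where the coresolving hypothesis is genuinely needed, since mere $\A$\+acyclicity of a complex over $\E$ does not force its cosyzygies into $\E$, so no termwise argument is available; this is the step I expect to be the main obstacle. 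In the situations arising in this paper, where $\A$ has enough injective objects and these lie in $\E$, detection admits a clean proof: a functorial injective coresolution of a complex over $\E$ has all cosyzygies in $\E$ (as $\E$ is closed under the cokernels of the admissible monomorphisms into injectives), so by the totalization lemma applied inside $\E$ the given complex is isomorphic in $\D^\co(\E)$ to a complex of injectives; the latter is coacyclic in $\A$, hence contractible by the full faithfulness of $\Hot(\A_\inj)\rarrow\D^\co(\A)$, hence coacyclic in $\E$. The fully general exact-category case requires instead a d\'evissage transporting the coacyclicity witness into $\E$.

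Finally, the assertion about the conventional derived categories follows formally. As every coacyclic complex is acyclic, $\D(\E)$ and $\D(\A)$ are the Verdier quotients of $\D^\co(\E)$ and $\D^\co(\A)$ by the images of their respective subcategories of acyclic complexes. An acyclic complex over $\E$ is acyclic in $\A$, so under the equivalence $\D^\co(\E)\simeq\D^\co(\A)$ the acyclics over $\E$ form a triangulated subcategory of the acyclics over $\A$; the standard identity for an iterated Verdier quotient then exhibits $\D(\E)\rarrow\D(\A)$ as the quotient of $\D(\E)$ by the acyclic-in-$\A$ complexes modulo the acyclic-in-$\E$ ones. In particular it is a Verdier quotient functor, as claimed.
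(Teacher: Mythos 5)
Your deduction of the second assertion is correct and is exactly the paper's argument: both $\D^\co(\E)\rarrow\D(\E)$ and $\D^\co(\A)\rarrow\D(\A)$ are Verdier quotients (by the thick subcategories generated by the acyclic complexes), so once $\D^\co(\E)\simeq\D^\co(\A)$ is known, the iterated-quotient identity makes $\D(\E)\rarrow\D(\A)$ a Verdier quotient. The totalization lemma, the essential surjectivity argument, and the roof-calculus reduction of full faithfulness to ``detection'' are also sound. The genuine gap is precisely where you flag it: the detection statement --- a complex over $\E$ that is coacyclic in $\A$ is coacyclic in $\E$ --- is never proved in the generality of the proposition. The statement concerns an \emph{arbitrary} exact category $\A$ with exact coproducts and an arbitrary coresolving, coproduct-closed subcategory $\E\subset\A$; nothing guarantees that $\A$ has enough injective objects, let alone that they lie in $\E$. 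Your proof of detection uses both extra hypotheses essentially: you need to coresolve a complex over $\E$ by complexes of injectives with cosyzygies in $\E$, and you need the full faithfulness of $\Hot(\A_\inj)\rarrow\D^\co(\A)$ to conclude contractibility. The closing sentence about ``a d\'evissage transporting the coacyclicity witness into $\E$'' is a placeholder, not an argument: the witness lives in $\Hot(\A)$ and is built from totalizations of short exact sequences whose terms need not be anywhere near $\E$, and there is no evident way to push it into $\Hot(\E)$. So, as written, your proof of the first assertion covers only a special case --- one that does suffice for the paper's applications, since an $\infty$\+tilting pair satisfies $\A_\inj\subset\E$ by definition, but not the proposition as stated.

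For comparison, the paper makes no attempt at a self-contained proof of the first assertion: it quotes \cite[Proposition~A.3.1(b)]{Pcosh} for part~(b), dualizes for part~(a), and then deduces the Verdier-quotient claim from the commutative diagram $\D^\co(\E)=\D^\co(\A)\rarrow\D(\E)\rarrow\D(\A)$ exactly as you do. The cited result handles general exact categories without any appeal to injective objects, which is what your argument is missing; replacing your detection step by that citation (or by reproducing its proof) would make the rest of your write-up complete.
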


\begin{proof}
 The first assertion of part~(b) is~\cite[Proposition~A.3.1(b)]{Pcosh},
and the first assertion of part~(a) is the dual result.

 To prove the second assertion of part~(a), notice that we have
a commutative diagram of triangulated functors
$\D^\co(\E)=\D^\co(\A)\rarrow\D(\E)\rarrow\D(\A)$, where both
the functors $\D^\co(\A)\rarrow\D(\E)$ and $\D^\co(\A)\rarrow
\D(\A)$ are Verdier quotient functors.
 It follows that the functor $\D(\E)\rarrow\D(\A)$ is also
a Verdier quotient.
\end{proof}

\begin{figure}
\[
\begin{diagram}
\node{\D^\co(\A)}\arrow{s,A}\node[2]{\D^\ctr(\B)}\arrow{s,A} \\
\node{\D(\E)}\arrow{s,A}\arrow[2]{e,=}\node[2]{\D(\F)}\arrow{s,A} \\
\node{\D(\A)}\node[2]{\D(\B)}
\end{diagram}
\]
\caption{Equivalences induced by $\infty$-tilting/$\infty$-cotilting pairs when $\E$ is closed under coproducts and $\F$ under products.}%
\label{equivalences-coderived-fig}%
\end{figure}

 In particular, Proposition~\ref{co-products-closed-exact-subcategory}
tells that, when in the situation of 
Corollary~\ref{tilting-cotilting-correspondence-cor} the full
subcategory $\E\subset\A$ is closed under coproducts and the full
subcategory $\F\subset\B$ is closed under products, we have
a commutative diagram formed by Verdier quotient functors and
a triangulated equivalence as in Figure~\ref{equivalences-coderived-fig}.

\begin{rem}
 Let $\A$ be an exact category with exact coproducts, and let
$\E'\subset\E''\subset\A$ be two coresolving subcategories closed
under coproducts.
 Then one has $\D^\co(\E')\simeq\D^\co(\E'')\simeq\D^\co(\A)$,
while the natural functors between the conventional derived categories
$\D(\E')\rarrow\D(\E'')\rarrow\D(\A)$ are Verdier quotient functors.
 Thus, when a coproduct-closed coresolving subcategory is being
enlarged, its derived category gets deflated.
 In other words, the larger the subcategory $\E\subset\A$, the smaller
its derived category $\D(\E)$.

 Similarly, let $\B$ be an exact category with exact products, and
let $\F'\subset\F''\subset\B$ be two resolving subcategories closed
under products.
 Then one has $\D^\ctr(\F')\simeq\D^\ctr(\F'')\simeq\D^\ctr(\B)$,
while the natural functors between the conventional derived categories
$\D(\F')\rarrow\D(\F'')\rarrow\D(\B)$ are Verdier quotient functors.

 In particular, when in the situation of
Proposition~\ref{tilting-cotilting-pairs-prop} there are
two $\infty$\+tilting pairs $(T,\.\E')$ and $(T,\.\E'')$ with
$\E'\subset\E''\subset\A$, and the corresponding
two $\infty$\+cotilting pairs are $(W,\.\F')$ and $(W,\.\F'')$,
so $\F'\subset\F''\subset\B$, we obtain the commutative diagram of
Verdier quotient functors and triangulated equivalences as in
Figure~\ref{equivalences-compatible-fig}.
  We refer to~\cite[Section~1]{Pps} for a further discussion.
\end{rem}

\begin{figure}
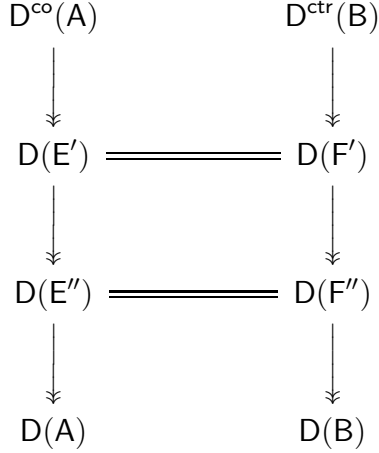

\[
\begin{diagram}
\node{\D^\co(\A)}\arrow{s,A}\node[2]{\D^\ctr(\B)}\arrow{s,A} \\
\node{\D(\E')}\arrow{s,A}\arrow[2]{e,=}\node[2]{\D(\F')}\arrow{s,A} \\
\node{\D(\E'')}\arrow{s,A}\arrow[2]{e,=}\node[2]{\D(\F'')}\arrow{s,A}
\\ \node{\D(\A)}\node[2]{\D(\B)}
\end{diagram}
\]
\caption{Compatible equivalences for different choices of $\infty$-tilting/$\infty$-cotilting pairs.}%
\label{equivalences-compatible-fig}%
\end{figure}

\Section{\texorpdfstring{$\infty$}{Infinity}-Tilting and \texorpdfstring{$\infty$}{Infinity}-Cotilting \texorpdfstring{$\mathrm t$}{t}-Structures}
\label{t-structures-secn}

The aim of the section is to lift the canonical t\+structures from $\D(\A)$ and $\D(\B)$ to $\D(\E)$ and $\D(\F)$, respectively, in Figures~\ref{equivalences-hotinj-fig} or~\ref{equivalences-coderived-fig} in the previous section. By doing this, we obtain a picture very similar to the classical tilting theory, where both $\A$ and $\B$ can be viewed as full subcategories of $\D(\E)$ such that $\E=\A\cap\B$ (since $\E\simeq\F$, we of course obtain the same picture in $\D(\F)$).

 We start with a lemma showing that t\+structures can be lifted with
respect to certain triangulated functors with partial adjoints.

\begin{lem} \label{lifting-t-structure-lemma}
 Let\/ $\D$ and ${}'\D$ be triangulated categories and
$(\D^{\le0},\.\D^{\ge0})$ be a t\+structure on\/~$\D$.
 Let $F\:{}'\D\rarrow\D$ be a triangulated functor such that
a right adjoint functor to $F$ is defined on\/ $\D^{\ge0}\subset\D$,
that is, for every object $X\in\D^{\ge0}$ there exists an object
$G(X)\in{}'\D$ such that the functors\/ $\Hom_{\D}(F({-}),X)$
and\/ $\Hom_{{}'\D}({-},G(X))$ are isomorphic on\/~${}'\D$.
 Assume that the adjunction morphism $\varepsilon_X\colon FG(X)\rarrow X$ is
an isomorphism in\/ $\D$ for all objects $X\in\D^{\ge0}$.

 Set\/ ${}'\D^{\le0}=F^{-1}(\D^{\le0})\subset{}'\D$ to be
the full preimage of\/ $\D^{\le0}$ under $F$ and\/
${}'\D^{\ge0}=G(\D^{\ge0})\subset{}'\D$ to be the essential
image of\/ $\D^{\ge0}$ under~$G$.
 Then the pair of full subcategories $({}'\D^{\le0},\.{}'\D^{\ge0})$
is a t\+structure on\/~${}'\D$.
 The functors $F$ and $G$ restrict to mutually inverse equivalences
between the abelian hearts\/ $\A=\D^{\le0}\cap\D^{\ge0}\subset\D$
and\/ ${}'\!\.\A={}'\D^{\le0}\cap{}'\D^{\ge0}\subset{}'\D$
of the two t\+structures.
\end{lem}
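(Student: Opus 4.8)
The plan is to verify the three t\+structure axioms for the pair $({}'\D^{\le0},\.{}'\D^{\ge0})$ directly and then read off the equivalence of hearts from the unit and counit of the partial adjunction. First I would record the compatibilities with the shift. Since $F$ is triangulated, ${}'\D^{\le n}={}'\D^{\le0}[-n]=F^{-1}(\D^{\le n})$ for all $n$; and a one-line Yoneda computation from the adjunction isomorphism $\Hom_{{}'\D}({-},G(X))\simeq\Hom_\D(F({-}),X)$ gives $G(X[-1])\simeq G(X)[-1]$ whenever $X,X[-1]\in\D^{\ge0}$, so that ${}'\D^{\ge n}=G(\D^{\ge n})$ for $n\ge0$. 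The inclusions ${}'\D^{\le0}\subset{}'\D^{\le1}$ and ${}'\D^{\ge1}\subset{}'\D^{\ge0}$ then follow at once by taking preimages and essential images of the corresponding inclusions in~$\D$.

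The orthogonality axiom is the easy half. Given $X\in{}'\D^{\le0}$ and $Y\in{}'\D^{\ge1}$, I would write $Y\simeq G(Y')$ with $Y'\in\D^{\ge1}$; then $\Hom_{{}'\D}(X,Y)\simeq\Hom_\D(F(X),Y')$ by adjunction, and this group vanishes because $F(X)\in\D^{\le0}$ and $Y'\in\D^{\ge1}$ by the orthogonality axiom for the t\+structure on~$\D$.

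The main work, and the step I expect to be the genuine obstacle, is producing the truncation triangle. Given $X\in{}'\D$, I would apply the t\+structure on $\D$ to $F(X)$, obtaining the canonical triangle $\tau_{\le0}F(X)\rarrow F(X)\xrightarrow{c}\tau_{\ge1}F(X)\rarrow(\tau_{\le0}F(X))[1]$, and set $B':=\tau_{\ge1}F(X)\in\D^{\ge1}$. Putting $B=G(B')\in{}'\D^{\ge1}$ and letting $\eta\:X\rarrow B$ be the morphism corresponding to $c$ under $\Hom_\D(F(X),B')\simeq\Hom_{{}'\D}(X,G(B'))$, I complete $\eta$ to a triangle $A\rarrow X\xrightarrow{\eta}B\rarrow A[1]$ in ${}'\D$. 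Applying $F$ and using that $\varepsilon_{B'}\:FG(B')\rarrow B'$ is an isomorphism (valid since $B'\in\D^{\ge1}\subset\D^{\ge0}$), the standard adjunction identity $\varepsilon_{B'}\circ F(\eta)=c$ identifies the triangle $F(A)\rarrow F(X)\xrightarrow{F(\eta)}FG(B')\rarrow F(A)[1]$ with the truncation triangle of $F(X)$; by uniqueness of the cone, $F(A)\simeq\tau_{\le0}F(X)\in\D^{\le0}$, so $A\in F^{-1}(\D^{\le0})={}'\D^{\le0}$, and $A\rarrow X\rarrow B\rarrow A[1]$ is the required triangle. The one delicate point is precisely this compatibility $\varepsilon_{B'}\circ F(\eta)=c$ of the adjunct with the canonical truncation morphism, which is exactly one of the triangle identities for the partial adjunction; everything else here is formal.

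Finally, for the hearts I would check that the functors restrict correctly: for $X\in\A=\D^{\le0}\cap\D^{\ge0}$ one has $FG(X)\simeq X\in\D^{\le0}$ via $\varepsilon_X$, so $G(X)\in{}'\D^{\le0}\cap{}'\D^{\ge0}={}'\!\.\A$; and for $Y\in{}'\!\.\A\subset{}'\D^{\ge0}$, writing $Y\simeq G(Y')$ gives $F(Y)\simeq Y'\in\D^{\ge0}$ while $F(Y)\in\D^{\le0}$, so $F(Y)\in\A$. The counit then yields $FG\simeq\id_\A$ directly. For $GF\simeq\id$ on the other heart I would invoke the remaining triangle identity $G(\varepsilon_X)\circ\eta_{G(X)}=\id_{G(X)}$: since $\varepsilon_X$ is an isomorphism for every $X\in\D^{\ge0}$, the unit $\eta_{G(X)}$ is an isomorphism on all of ${}'\D^{\ge0}=G(\D^{\ge0})$, in particular on ${}'\!\.\A$. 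Hence $F$ and $G$ restrict to mutually inverse equivalences $\A\simeq{}'\!\.\A$.
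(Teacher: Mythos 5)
Your proposal is correct and follows essentially the same route as the paper's own proof: the same adjunction argument for orthogonality, and the same construction of the truncation triangle (take the adjunct of the canonical truncation morphism of $F(X)$, complete it to a triangle in ${}'\D$, then apply $F$ and compare cones), with the compatibility $\varepsilon_{B'}\circ F(\eta)=c$ made explicit where the paper leaves it implicit. The only cosmetic difference is at the hearts: the paper first shows that $F$ and $G$ restrict to an equivalence of the coaisles $\D^{\ge0}\simeq{}'\D^{\ge0}$ (using that $G$ is an honest right adjoint of the restricted $F$ with $F\circ G=\id$, hence fully faithful) and then passes to the hearts, whereas you verify directly that the counit and unit are isomorphisms on the hearts via the triangle identities---the same content in slightly different packaging.
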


\begin{proof}
 One can easily check that the functor $G$ commutes with the shift
functors $[-1]$ on ${}'\D$ and~$\D$ (since the functor $F$~does).
 Let us show that $\Hom_{{}'\D}({}'\!X,{}'Y)=0$ for all
${}'\!X\in{}'\D^{\le0}$ and ${}'Y\in{}'\D^{\ge1}$.
 Indeed, we have $F({}'\!X)\in\D^{\le0}$ and ${}'Y=G(Y)$ for some
$Y\in\D^{\ge1}$.
 Hence $\Hom_{{}'\D}({}'\!X,{}'Y)=\Hom_{{}'\D}({}'\!X,G(Y))=
\Hom_\D(F({}'\!X),Y)=0$.

 Now let ${}'\!X\in{}'\D$ be an arbitrary object.
 Set $X=F({}'\!X)\in\D$, and consider a distinguished triangle
\begin{equation} \label{downstairs-triangle}
 \tau_{\le0}X\lrarrow X\lrarrow \tau_{\ge1}X\lrarrow(\tau_{\le0}X)[1]
\end{equation}
in $\D$ with $\tau_{\le0}X\in\D^{\le0}$ and $\tau_{\ge1}X\in\D^{\ge1}$.
 Put $\tau_{\ge1}{}'\!X=G(\tau_{\ge1}X)\in{}'\D^{\ge1}$.
 Then the morphism $F({}'\!X)=X\rarrow\tau_{\ge1}X$ in $\D$
corresponds to a certain morphism ${}'\!X\rarrow G(\tau_{\ge1}X)
=\tau_{\ge1}{}'\!X$ in~${}'\D$.
 Denote by $\tau_{\le0}{}'\!X$ a cocone of the latter morphism, so that
we have a distinguished triangle
\begin{equation} \label{upstairs-triangle}
 \tau_{\le0}{}'\!X\lrarrow{}'\!X\lrarrow\tau_{\ge1}{}'\!X\lrarrow
 (\tau_{\le0}{}'\!X)[1]
\end{equation}
in~${}'\D$.
 Applying the functor $F$ to the morphism ${}'\!X\rarrow
\tau_{\ge1}{}'\!X$ produces the morphism $X=F({}'\!X)\rarrow
F(\tau_{\ge1}({}'\!X))=FG(\tau_{\ge1}(X))=\tau_{\ge1}(X)$.
 Thus the triangulated functor $F$ takes the distinguished
triangle~\eqref{upstairs-triangle} to the distinguished
triangle~\eqref{downstairs-triangle}, and it follows that
the object $F(\tau_{\le0}{}'\!X)$ is isomorphic to~$\tau_{\le0}X$.
In other words, we have $\tau_{\le0}{}'\!X\in{}'\D^{\le0}$ and
$\tau_{\ge1}{}'\!X\in{}'\D^{\ge1}:={}'\D^{\ge0}[-1]$
in~\eqref{upstairs-triangle}.
It follows that $({}'\D^{\le0},\.{}'\D^{\ge0})$ is a t\+structure.

 Furthermore, the functors $F$ and $G$ restrict to an equivalence
between the coaisles $\D^{\ge0}\subset\D$ and ${}'\D^{\ge0}\subset{}'\D$.
 Indeed, if $'\!X \in{}'\D^{\ge0}$, then $'\!X = G(X)$ for some
$X\in\D^{\ge0}$ and $F('\!X) = FG(X) = X \in \D^{\ge0}$.
 Thus the functor $F$ restricts to $F\:{}'\D^{\ge0}\rarrow\D^{\ge0}$,
the functor $G\:\D^{\ge0}\rarrow{}'\D^{\ge0}$ is its (honest)
right adjoint, and the composition
\[
 \D^{\ge0}\overset{G}\lrarrow{}'\D^{\ge0}\overset{F}\lrarrow\D^{\ge0}
\]
is the identity functor by assumption.
 Hence the functor $G$ is fully faithful; and its essential image coincides
with ${}'\D^{\ge0}$ by the definition.
 Finally, for any ${}'\!X\in{}'\D^{\ge0}$ we have ${}'\!X\in{}'\!\.\A$
if and only if $F({}'\!X)\in\A$, because we have ${}'\!X\in{}'\D^{\le0}$
if and only if $F({}'\!X)\in\D^{\le0}$.
\end{proof}

\begin{rem}
In the special case where the functor $F\:{}'\D\rarrow\D$ from the former lemma is a part of a recollement

\[
\xymatrix{ \D \ar@{>->}@/^2ex/@<1ex>[rr] \ar@{>->}@/_2ex/@<-1ex>[rr]_-G \ar@{<-}[rr]|-F && {}'\D \ar@/^2ex/@<1ex>[rr] \ar@/_2ex/@<-1ex>[rr] \ar@{<-<}[rr] && {}''\D, }
\]

\noindent
the t\+structure $({}'\D^{\le0},\.{}'\D^{\ge0})$ coincides with the result of gluing $(\D^{\le0},\.\D^{\ge0})$ with the trivial t\+structure $({}''\D,0)$ on ${}''\D$ in the sense of \cite[Th\'eor\`eme 1.4.10]{BBD}.
\end{rem}

 We recall that for any t\+structure $(\D^{\le0},\.\D^{\ge0})$ on
a triangulated category $\D$ with the abelian heart
$\A=\D^{\le0}\cap\D^{\ge0}\subset\D$ there are natural maps
\begin{equation} \label{ext-hom-comparison}
 \theta^i_{\A,\D}=\theta^i_{\A,\D}(X,Y)\:
 \Ext_\A^i(X,Y)\rarrow\Hom_\D(X,Y[i])
 \quad\text{for all $X$, $Y\in\A$, $\,i\ge0$.}
\end{equation}
 A t\+structure $(\D^{\le0},\.\D^{\ge0})$ is said to be \emph{of
the derived type} if the maps $\theta^i_{\A,\D}(X,Y)$ are isomorphisms
for all $X$, $Y\in\A$ and $i\ge0$ (see~\cite[Remarque 3.1.17]{BBD}, \cite[Corollary~A.17]{Partin} or \cite[Section~4]{PS} for further details).

\begin{lem} \label{derived-type-lemma}
 In the context of Lemma~\textup{\ref{lifting-t-structure-lemma}},
the t\+structure $({}'\D^{\le0},\.{}'\D^{\ge0})$ on the triangulated
category\/ ${}'\D$ is of the derived type if and only if
the t\+structure $(\D^{\le0},\.\D^{\ge0})$ on the triangulated
category\/ $\D$ is.
\end{lem}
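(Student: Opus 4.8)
The plan is to reduce everything to the equivalence between the two hearts furnished by Lemma~\ref{lifting-t-structure-lemma}, and to show that this equivalence, together with the triangulated functor $F$, identifies the comparison maps~\eqref{ext-hom-comparison} on the two sides. For objects ${}'X$, ${}'Y\in{}'\A$ I write $X=F({}'X)$ and $Y=F({}'Y)$ in $\A$, so that ${}'X\simeq G(X)$ and ${}'Y\simeq G(Y)$ by the equivalence $\A\simeq{}'\A$. The goal is to assemble a commutative square whose horizontal arrows are $\theta^i_{{}'\A,{}'\D}({}'X,{}'Y)$ and $\theta^i_{\A,\D}(X,Y)$ and whose two vertical arrows, both induced by $F$, are isomorphisms. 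Once this is established, the two $\theta^i$ are isomorphisms simultaneously; and since $F|_{{}'\A}$ is essentially surjective, every pair $X$, $Y\in\A$ arises in this way, so ``derived type upstairs'' is equivalent to ``derived type downstairs''.

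The first step is the vertical isomorphism on the $\Hom$ side: I claim that $F$ induces an isomorphism $\Hom_{{}'\D}({}'X,{}'Y[i])\rarrow\Hom_\D(X,Y[i])$ for all $i\ge0$. The essential point is to invoke the partial adjunction only where $G$ is defined. Rewriting the source as $\Hom_{{}'\D}({}'X[-i],{}'Y)=\Hom_{{}'\D}({}'X[-i],G(Y))$ and applying the adjunction isomorphism of Lemma~\ref{lifting-t-structure-lemma} at the object $Y\in\A\subset\D^{\ge0}$, I obtain $\Hom_\D(F({}'X[-i]),Y)=\Hom_\D(X[-i],Y)=\Hom_\D(X,Y[i])$. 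Since the adjunction isomorphism sends a morphism $g$ to $\varepsilon_Y\circ F(g)$ with $\varepsilon_Y$ invertible, this composite is precisely the map induced by $F$ (after the identification $FG(Y)\simeq Y$), so the latter is an isomorphism. The trick is to shift ${}'X$ rather than $Y$, so that $G$ is never evaluated outside $\D^{\ge0}$. On the $\Ext$ side the corresponding statement is immediate: $F|_{{}'\A}\:{}'\A\rarrow\A$ is an equivalence of abelian categories, hence exact, hence it induces isomorphisms $\Ext^i_{{}'\A}({}'X,{}'Y)\rarrow\Ext^i_\A(X,Y)$ for all $i\ge0$.

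The main work, and the main obstacle, is the compatibility of these two vertical isomorphisms with the maps $\theta^i$, that is, the commutativity of the square. Here I recall that $\theta^i$ is built by regarding an $i$\+fold Yoneda extension in the heart as a splice of short exact sequences, each of which yields a distinguished triangle $A\rarrow B\rarrow C\rarrow A[1]$ in the ambient triangulated category, and then composing the resulting connecting morphisms to produce an element of $\Hom_\D(X,Y[i])$. Because $F$ is triangulated it carries distinguished triangles to distinguished triangles, and because $F|_{{}'\A}$ is exact it carries the short exact sequences realizing a Yoneda extension in ${}'\A$ to the corresponding short exact sequences in $\A$, along with their triangles and connecting morphisms. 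Consequently $F$ intertwines the two constructions of $\theta^i$, up to the invertible identifications $\varepsilon$ already used on the $\Hom$ side, which is exactly the commutativity needed.

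I expect the only genuine care to be required in matching the connecting\+morphism description of $\theta^i$ through the triangulated functor $F$, and in verifying that the $\varepsilon$\+identifications employed in the $\Hom$\+side isomorphism agree with those arising when the triangles are transported by $F$; the base case $i=0$ is automatic, since $\theta^0$ is the identity and the restriction of $\Hom_\D$ to the heart recovers $\Hom_\A$. Granting the square, both vertical maps are isomorphisms for every $i\ge0$ and every pair ${}'X$, ${}'Y\in{}'\A$, so $\theta^i_{{}'\A,{}'\D}$ is an isomorphism if and only if $\theta^i_{\A,\D}$ is, which yields the asserted equivalence of the two ``derived type'' conditions.
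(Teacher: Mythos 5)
Your proof is correct and follows essentially the same route as the paper: both sides of the comparison map $\theta^i$ are transported along $F$, the Ext side via the equivalence of hearts and the Hom side by shifting $X$ down into the coaisle, where your adjunction computation ($G$ evaluated only at $Y\in\D^{\ge0}$) just re-derives the full faithfulness of $F\:{}'\D^{\ge0}\rarrow\D^{\ge0}$ that the paper quotes from the proof of Lemma~\ref{lifting-t-structure-lemma}. The only difference is that you spell out the compatibility of $F$ with the maps~$\theta^i$ via Yoneda splicing, a naturality check the paper treats as implicit.
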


\begin{proof}
 According to Lemma~\ref{lifting-t-structure-lemma}, the functor
$F\:{}'\!\.\A\rarrow\A$ is an equivalence of categories.
 So is, according to the proof of Lemma~\ref{lifting-t-structure-lemma},
the functor $F\:{}'\D^{\ge0}\rarrow\D^{\ge0}$.
 It remains to observe that the domain of
the map~\eqref{ext-hom-comparison} is an Ext group computed in
the abelian heart of the t\+structure, while the codomain is
a Hom group in the coaisle: $\Hom_\D(X,Y[i])=\Hom_\D(X[-i],Y)$,
and both the objects $X[-i]$ and $Y$ belong to~$\D^{\ge0}$.
\end{proof}

 The following lemma describes the situation in which we want to apply
Lemma~\ref{lifting-t-structure-lemma}.

\begin{lem} \label{coresolving-subcategory-lemma}
 Let\/ $\A$ be an abelian category and\/ $\E\subset\A$ be a coresolving
subcategory, viewed as an exact category with the exact category
structure inherited from\/~$\A$.
 Then the functor between the derived categories of bounded below
complexes\/ $\D^+(\E)\rarrow\D^+(\A)$ induced by the exact embedding
functor\/ $\E\rarrow\A$ is a triangulated equivalence.
 The inverse functor to this equivalence $\D(\A)\supset\D^+(\A)\rarrow \D^+(\E)\subset\D(\E)$ is a partially defined right adjoint functor (in a sense analogous to the statement of Lemma~\ref{lifting-t-structure-lemma}) to
the functor between the unbounded derived categories\/ $\D(\E)\rarrow \D(\A)$ induced by the exact embedding\/ $\E\rarrow\A$.
\end{lem}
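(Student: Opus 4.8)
The plan is to prove the equivalence $\D^+(\E)\rarrow\D^+(\A)$ first and then read off the partial adjunction from the same analysis. For the equivalence I would use the classical ``coresolving subcategories compute the bounded-below derived category'' argument. The key homological input is that \emph{a bounded-below complex over $\E$ which is exact in $\A$ is already exact in the exact category $\E$}: working upward from the lowest nonzero degree, each cocycle $Z^j$ is the cokernel of the admissible monomorphism $Z^{j-1}\rarrow E^{j-1}$, so closure of $\E$ under cokernels of monomorphisms together with the vanishing base case (from bounded\+belowness) forces every $Z^j\in\E$, whence the complex is built from conflations in $\E$. Consequently the $\A$\+quasi-isomorphisms between bounded-below $\E$\+complexes coincide with the $\E$\+quasi-isomorphisms. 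Since $\E$ is cogenerating and closed under cokernels of monomorphisms, every object of $\A$ admits a right $\E$\+coresolution, and a Cartan--Eilenberg-type construction upgrades this to: every bounded-below complex over $\A$ receives a quasi-isomorphism into a bounded-below complex over $\E$. These are exactly the hypotheses of the standard resolution/localization lemma, which yields that $\D^+(\E)\rarrow\D^+(\A)$ is a triangulated equivalence, with inverse $R$ sending $X$ to the class of an $\E$\+coresolution. Under the standing hypotheses of this section, where $\A$ has enough injectives and $\A_\inj\subset\E$, one may simply take $R(X)$ to be a genuine injective coresolution, which is a bounded-below complex over $\E$.

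For the second assertion I would set, for $X\in\D^+(\A)$, the object $R(X)\in\D^+(\E)\subset\D(\E)$ as above, so that the counit $\varepsilon_X\colon\iota R(X)\rarrow X$ is the isomorphism furnished by the first assertion (here $\iota\colon\D(\E)\rarrow\D(\A)$ is induced by the exact embedding). What must be proved is the natural isomorphism $\Hom_{\D(\A)}(\iota C,X)\simeq\Hom_{\D(\E)}(C,R(X))$ for all $C\in\D(\E)$; since $\iota R(X)\simeq X$, this is equivalent to showing that $\iota$ is fully faithful into bounded-below targets, i.e.\ that the map $\Hom_{\D(\E)}(C,E^\bullet)\rarrow\Hom_{\D(\A)}(\iota C,\iota E^\bullet)$ is bijective for every $E^\bullet\in\D^+(\E)$ and every (possibly unbounded) $C\in\D(\E)$.

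The hard part is exactly this full faithfulness, and the main obstacle is that $\iota$ is in general \emph{not} faithful on unbounded complexes --- the very phenomenon that makes $\D(\E)\rarrow\D(\A)$ a proper Verdier quotient in Proposition~\ref{co-products-closed-exact-subcategory}: an $\A$\+exact complex over $\E$ need not be $\E$\+exact once it is unbounded below. I would circumvent this by realizing $R(X)$ as a bounded-below complex $I^\bullet$ of objects injective in the exact category $\E$, available because $\A_\inj\subset\E$ and such objects are $\E$\+injective. A bounded-below complex of injective objects is homotopy injective in both $\Hot(\E)$ and $\Hot(\A)$ (the standard degreewise extension along conflations, with the bounded-below base case, transfers verbatim to $\E$). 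The comparison then factors as $\Hom_{\D(\E)}(C,I^\bullet)=\Hom_{\Hot(\E)}(C,I^\bullet)=\Hom_{\Hot(\A)}(\iota C,\iota I^\bullet)=\Hom_{\D(\A)}(\iota C,\iota I^\bullet)$, the outer equalities being homotopy injectivity in $\Hot(\E)$ and $\Hot(\A)$ and the middle one the full faithfulness of $\Hot(\E)\rarrow\Hot(\A)$ (immediate, as $\E$ is full in $\A$); this settles all $C$ at once and identifies $R$ as the asserted partial right adjoint. Should one wish to avoid injective terms, the same obstacle can be met by stupid-truncating $C$ and invoking the five lemma, the bounded-below summand being handled by the first assertion and the bounded-above tail by presenting both $E^\bullet$ and the tail as homotopy (co)limits of their brutal truncations, reducing every $\Hom$ to one between bounded complexes where the first assertion and t\+structure orthogonality in $\D(\A)$ apply; I expect the injective-coresolution route to be the cleanest and the one adapted to the setting in which the lemma is used.
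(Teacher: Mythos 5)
Your first half (the equivalence $\D^+(\E)\simeq\D^+(\A)$) follows the same lines as the paper's, and your reduction of the second half to the claim that $\Hom_{\D(\E)}(C,E^\bu)\rarrow\Hom_{\D(\A)}(C,E^\bu)$ is bijective for every (unbounded) complex $C$ over $\E$ and every bounded-below complex $E^\bu$ over $\E$ is exactly the paper's key map~\eqref{hom-E-hom-A}. The gap is in how you prove this claim. Your main route coresolves the target by a bounded-below complex of injective objects of $\A$, which requires $\A$ to have enough injectives \emph{and} $\A_\inj\subset\E$. Neither is a hypothesis of the lemma: it is stated for an arbitrary abelian category $\A$ with an arbitrary coresolving subcategory $\E$, and, contrary to what you assert, there are no standing hypotheses in this section supplying injectives --- the complete, cocomplete categories with injective cogenerator and the condition $\A_\inj\subset\E$ enter only in the application at the end of the section, whereas the lemma and Proposition~\ref{coresolving-t-structure} that it feeds are stated (and the dual proposition deduced) in full generality. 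Granting those extra hypotheses your K\+injectivity argument is sound, so what you prove is a weaker statement that would suffice for the paper's eventual $\infty$\+tilting applications, but it is not a proof of the lemma.

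Your fallback route, intended to remove the injectives, does not close this gap. Presenting the bounded-above tail of $C$ (or $E^\bu$) as a homotopy (co)limit of brutal truncations requires countable coproducts, respectively products, in $\D(\E)$ and $\D(\A)$; under the lemma's hypotheses neither $\A$ nor $\E$ has any (co)completeness, and even when $\A$ does, $\E$ need not be closed under coproducts --- the failure of precisely this closure property is a central concern of the paper (see Section~\ref{derived-equivalence-secn}). Moreover, the orthogonality you invoke lives in $\D(\A)$, but after truncating you still need to control Hom groups computed in $\D(\E)$, where no t\+structure is available yet (producing one is the very purpose of the lemma), and transporting vanishing back from $\D(\A)$ presupposes the full faithfulness you are trying to prove. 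The paper avoids all of this by straightening fractions directly: in a roof $C\rarrow X^\bu\larrow E^\bu$ in $\D(\A)$, the complex $X^\bu$ is acyclic in low degrees because $E^\bu$ is bounded below, so $X^\bu\rarrow\tau_{\ge n}X^\bu$ is a quasi-isomorphism for $n\ll0$; coresolving $\tau_{\ge n}X^\bu$ by a bounded-below complex $G^\bu$ over $\E$ makes the composite $E^\bu\rarrow G^\bu$ a quasi-isomorphism of complexes \emph{over $\E$} (by the same acyclicity-transfer observation you use in your first half), so the roof can be rewritten entirely inside $\E$; the identical trick yields injectivity. That argument needs no injective objects and no (co)products, which is what the stated generality demands; your second half would need to be replaced by something of this kind.
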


\begin{proof}
 For any bounded below complex $A^\bu$ in $\A$ there exists
a bounded below complex $E^\bu$ in $\E$ together with
a quasi-isomorphism $A^\bu\rarrow E^\bu$ of complexes in $\A$
\cite[Lemma~I.4.6(1)]{Hart66}.
 Thus, the functor $\D^+(\E)\rarrow\D^+(\A)$ is essentially surjective.

 Since $\E$ is closed under the cokernels of monomorphisms, any
bounded below complex in $\E$ that is acyclic in $\A$ is
also acyclic in~$\E$.
 From this we will deduce that for any complex $E^\bu$ in $\E$ and
any bounded below complex $F^\bu$ in $\E$ the natural map
\begin{equation} \label{hom-E-hom-A}
 \Hom_{\D(\E)}(E^\bu,F^\bu)\lrarrow\Hom_{\D(\A)}(E^\bu,F^\bu)
\end{equation}
is an isomorphism, which implies both that the functor
$\D^+(\E)\rarrow\D^+(\A)$ is fully faithful (hence, a triangulated
equivalence) and that the inverse functor to it is partially right adjoint to
the canonical functor $\D(\E)\rarrow\D(\A)$.

 Indeed, an arbitrary morphism $E^\bu\rarrow F^\bu$ in the derived
category $\D(\A)$ can be represented by a fraction of morphisms of complexes
$E^\bu\rarrow X^\bu\larrow F^\bu$, where $X^\bu$ is a complex in $\A$
and $F^\bu\rarrow X^\bu$ is a quasi-isomorphism of complexes in~$\A$.
 Now the complex $X^\bu$ is acyclic in low cohomological degrees, so
for $n\ll0$ the natural morphism from $X^\bu$ to its canonical truncation
$X^\bu\rarrow \tau_{\ge n}X^\bu$ is a quasi-isomorphism of complexes
in~$\A$.
 The complex $\tau_{\ge n}X^\bu$ is bounded below, so there exists
a bounded below complex $G^\bu$ in $\E$ together with a quasi-isomorphism
$\tau_{\ge n}X^\bu\rarrow G^\bu$ of complexes in~$\A$.
 Then the composition $F^\bu\rarrow X^\bu\rarrow \tau_{\ge n}X^\bu
\rarrow G^\bu$ is a quasi-isomorphism of complexes in the exact
category~$\E$.
This allows to represent our morphism $E^\bu\rarrow F^\bu$ in
$\D(\A)$ by a fraction $E^\bu\rarrow G^\bu\larrow F^\bu$ of morphisms
of complexes in~$\E$.
 This proves surjectivity of the map~\eqref{hom-E-hom-A}.
 
 The injectivity is similar.
 If a fraction $E^\bu\rarrow X^\bu\larrow F^\bu$ vanishes in the group
$\Hom_{\D(\A)}(E^\bu,F^\bu)$, then there exists a quasi-isomorphism
$X^\bu\rarrow G^\bu$ of complexes in $\A$ such that
$E^\bu\rarrow X^\bu\rarrow G^\bu$ is null-homotopic.
 As above, we can choose $X^\bu\rarrow G^\bu$ so that $G^\bu$ is
a bounded below complex in $\E$, and it follows that the fraction
vanishes in $\Hom_{\D(\E)}(E^\bu,F^\bu)$ as well.
\end{proof}

 Given an abelian category $\A$ with a coresolving subcategory $\E\subset\A$,
for any complex $E^\bu$ in $\E$ we denote by $H^n_\A(E^\bu)\in\A$
the cohomology objects of the complex $E^\bu$ viewed as a complex in~$\A$.
 Consider the following two full subcategories in the unbounded derived
category $\D(\E)$:
\begin{itemize}
\item $\D_\A^{\le0}(\E)\subset\D(\E)$ is the full subcategory of all complexes
$E^\bu$ in $\E$ such that $H^n_\A(E^\bu)=0$ for all $n>0$;
\item $\D^{\ge0}(\E)\subset\D(\E)$ is the full subcategory of all objects in
$\D(\E)$ that can be represented by complexes $E^\bu$ in $\E$ with
$E^n=0$ for all $n<0$.
\end{itemize}
 As in the usual notation, for any $n\in\boZ$ we set
$\D_\A^{\le n}(\E) =\D_\A^{\le0}(\E)[-n]\subset\D(\E)$ and
$\D^{\ge n}(\E)=\D^{\ge0}(\E)[-n]\subset\D(\E)$.

\begin{prop} \label{coresolving-t-structure}
 Let\/ $\A$ be an abelian category and\/ $\E\subset\A$ be a coresolving
subcategory.
 Then the pair of full subcategories $(\D_\A^{\le0}(\E),\.\D^{\ge0}(\E))$
is a t\+structure on the unbounded derived category\/ $\D(\E)$ of
the exact category\/~$\E$.
 Moreover, this is a t\+structure of the derived type, and
the triangulated functor\/ $\D(\E)\rarrow\D(\A)$ induced by
the exact embedding\/ $\E\rarrow\A$ identifies its heart\/
$\D_\A^{\le0}(\E)\cap\D^{\ge0}(\E)$ with the abelian category\/~$\A$.
\end{prop}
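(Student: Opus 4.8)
The plan is to realize the pair $(\D_\A^{\le0}(\E),\.\D^{\ge0}(\E))$ as the t\+structure obtained by lifting the canonical t\+structure on $\D(\A)$ along the triangulated functor $F\:\D(\E)\rarrow\D(\A)$ induced by the exact embedding $\E\rarrow\A$, using Lemma~\ref{lifting-t-structure-lemma}. Thus I take $\D=\D(\A)$ equipped with its standard t\+structure $(\D^{\le0}(\A),\.\D^{\ge0}(\A))$ and ${}'\D=\D(\E)$. The hypotheses of Lemma~\ref{lifting-t-structure-lemma} are supplied by Lemma~\ref{coresolving-subcategory-lemma}: the coaisle $\D^{\ge0}(\A)$ lies inside $\D^+(\A)$, on which a right adjoint $G$ to $F$ is defined, namely the inverse of the equivalence $\D^+(\E)\simeq\D^+(\A)$; and on this subcategory the counit $FG(X)\rarrow X$ is an isomorphism, since $F$ and $G$ are mutually inverse on the bounded\+below derived categories.

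Granting the hypotheses, Lemma~\ref{lifting-t-structure-lemma} yields a t\+structure $({}'\D^{\le0},\.{}'\D^{\ge0})$ on $\D(\E)$ with ${}'\D^{\le0}=F^{-1}(\D^{\le0}(\A))$ and ${}'\D^{\ge0}=G(\D^{\ge0}(\A))$, and I must match these with the subcategories in the statement. The aisle is immediate: $F$ sends a complex in $\E$ to the same complex regarded in $\A$, so $F^{-1}(\D^{\le0}(\A))$ consists of exactly those $E^\bu$ with $H^n_\A(E^\bu)=0$ for $n>0$, which is $\D_\A^{\le0}(\E)$. The content is in identifying the coaisle $G(\D^{\ge0}(\A))$ with $\D^{\ge0}(\E)$. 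For $\D^{\ge0}(\E)\subseteq G(\D^{\ge0}(\A))$, note that any $Y$ represented by a complex in $\E$ concentrated in degrees $\ge0$ has $F(Y)\in\D^{\ge0}(\A)$ and, being bounded below, satisfies $Y\simeq G(F(Y))$ by the $\D^+$\+equivalence. For the reverse inclusion, I represent a given $X\in\D^{\ge0}(\A)$ by a complex in $\A$ concentrated in degrees $\ge0$ and coresolve it by a complex in $\E$ supported in the same degrees, exhibiting $G(X)$ as an object of $\D^{\ge0}(\E)$.

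I expect this last point --- constructing a coresolution that preserves the lower degree bound --- to be the main obstacle. Lemma~\ref{coresolving-subcategory-lemma} and its source \cite[Lemma~I.4.6(1)]{Hart66} only guarantee a bounded\+below coresolution, so I must check that the step\+by\+step construction, which uses that $\E$ is cogenerating and closed under cokernels of admissible monomorphisms, introduces no terms in negative degrees when the input already vanishes there. This is true but is the one genuinely computational point; everything else is formal.

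The remaining two assertions then follow without further effort. By Lemma~\ref{derived-type-lemma}, the lifted t\+structure is of the derived type precisely when the canonical t\+structure on $\D(\A)$ is, and the latter always is, since $\Hom_{\D(\A)}(X,Y[i])\simeq\Ext_\A^i(X,Y)$ for $X$, $Y\in\A$ and $i\ge0$. Finally, Lemma~\ref{lifting-t-structure-lemma} provides mutually inverse equivalences between the hearts induced by $F$ and $G$; since the heart of the canonical t\+structure on $\D(\A)$ is $\A$ itself, the functor $\D(\E)\rarrow\D(\A)$ identifies the heart $\D_\A^{\le0}(\E)\cap\D^{\ge0}(\E)$ with $\A$, as required.
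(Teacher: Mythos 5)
Your proposal is correct and follows essentially the same route as the paper: the paper also applies Lemma~\ref{lifting-t-structure-lemma} to the canonical functor $F\:\D(\E)\rarrow\D(\A)$ with the standard t\+structure on $\D(\A)$, taking the partially defined right adjoint $G$ from Lemma~\ref{coresolving-subcategory-lemma} and invoking Lemma~\ref{derived-type-lemma} for the derived-type claim. The one point you flag as the genuine computational content --- that a complex in $\A$ concentrated in degrees $\ge 0$ admits a quasi-isomorphism into a complex in $\E$ concentrated in the same degrees --- is exactly the fact the paper itself uses (without further elaboration) when constructing the truncation $\tau^\E_{\ge1}E^\bu$, so your treatment matches the paper's level of detail there as well.
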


\begin{proof}
%
 We apply Lemma~\ref{lifting-t-structure-lemma}
to the situation described in Lemma~\ref{coresolving-subcategory-lemma},
where ${}'\D=\D(\E)$, $\D=\D(\A)$, and $F\:\D(\E)\rarrow\D(\A)$ is
the canonical functor.
 Moreover, we set $(\D^{\le0},\.\D^{\ge0})$ to be the canonical
t\+structure on $\D(\A)$, which is certainly of the derived type.
Then $G = F|_{\D^{\ge0}(\E)}^{-1}\:\D^{\ge0}\rarrow\D^{\ge0}(\E)\subset\D(\E)$
is a partially defined right adjoint to $F$ in the sense of 
Lemma~\ref{lifting-t-structure-lemma} and
$(\D_\A^{\le0}(\E),\.\D^{\ge0}(\E))$ is precisely
the lifted t\+structure from the conclusion of the lemma.
It is of the derived type by Lemma~\ref{derived-type-lemma}.

 For clarity, we summarize the construction of
the t\+structure truncations $\tau^\E_{\le0}E^\bu$ and
$\tau^\E_{\ge1}E^\bu$ for a given complex $E^\bu$ over $\E$.
One first considers its canonical truncation $\tau^\A_{\ge1}E^\bu$
as a complex in $\A$, in the standard t\+structure on~$\D(\A)$.
 So $\tau^\A_{\ge1}E^\bu$ is a complex in $\A$ with the terms
concentrated in the cohomological degrees~$\ge\nobreak1$; hence there
exists a complex $F^\bu$ in $\E$ with the terms concentrated in
the cohomological degrees~$\ge\nobreak1$ endowed with a quasi-isomorphism
$\tau^\A_{\ge1}E^\bu\rarrow F^\bu$ of complexes in~$\A$.
 One sets $\tau^\E_{\ge1}E^\bu=F^\bu$, and $\tau^\E_{\le0}E^\bu$
is a cocone of the morphism of complexes $E^\bu\rarrow F^\bu$
in~$\D(\E)$.
\end{proof}

\begin{rem}
 It is instructive to look into (non)degeneracy properties of
the t\+structure $(\D_\A^{\le0}(\E),\.\D^{\ge0}(\E))$ on $\D(\E)$.
 The intersection $\bigcap_{n\ge0}\D^{\ge n}(\E)\subset\D(\E)$ consists
of some bounded below complexes in $\E$ with vanishing cohomology
in~$\A$.
 All such complexes are acyclic in $\E$, so this intersection is
a zero category.
 On the other hand, the intersection $\bigcap_{n\le 0}\D^{\le n}_\A(\E) \subset\D(\E)$ consists of all the complexes in $\E$ with vanishing
cohomology in~$\A$.
 This is precisely the kernel of the triangulated functor $\D(\E)\rarrow
\D(\A)$, and it can very well be nontrivial. Indeed, let $k$ be a field,
$\A=k[x]/(x^2)\modl$ and $\E=\A_\inj$ (see also Example~\ref{locally-noetherian-ex1}
below). Since the complex
\[ \cdots \lrarrow k[x]/(x^2) \overset{x}\lrarrow k[x]/(x^2) \overset{x}\lrarrow k[x]/(x^2) \lrarrow \cdots \]
is acyclic but not contractible, it is non-zero in $\D(\E)=\Hot(\A_\inj)$,
but it becomes zero in $\D(\A)$.
\end{rem}

 Let us formulate the dual assertions.
 Given an abelian category $\B$ with a resolving subcategory
$\F\subset\B$, for any complex $F^\bu$ in $\F$ we denote by
$H^n_\B(F^\bu)\in\B$ the cohomology objects of the complex $F^\bu$
viewed as a complex in~$\B$.
 Consider the following two subcategories in the unbounded derived
category~$\D(\F)$:
\begin{itemize}
\item $\D^{\le0}(\F)\subset\D(\F)$ is the full subcategory of all objects
in $\D(\F)$ that can be represented by complexes $F^\bu$ in $\F$ with
$F^n=0$ for all $n>0$;
\item $\D_\B^{\ge0}(\F)\subset\D(\F)$ is the full subcategory of all
complexes $F^\bu$ in $\F$ such that $H^n_\B(F^\bu)=0$ for all $n<0$.
\end{itemize}

\begin{prop}
 Let\/ $\B$ be an abelian category and\/ $\F\subset\B$ be a resolving
subcategory.
 Then the pair of full subcategories $(\D^{\le0}(\F),\.\D_\B^{\ge0}(\F))$
is a t\+structure on the unbounded derived category\/ $\D(\F)$ of
the exact category\/~$\F$.
 Moreover, this is a t\+structure of the derived type, and the triangulated
functor\/ $\D(\F)\rarrow\D(\B)$ induced by the exact embedding\/
$\F\rarrow\B$ identifies its heart $\D^{\le0}(\F)\cap\D_\B^{\ge0}(\F)$
with the abelian category\/~$\B$.
\end{prop}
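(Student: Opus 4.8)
The plan is to deduce this proposition from Proposition~\ref{coresolving-t-structure} by a systematic passage to opposite categories, rather than redoing the argument from scratch. First I would observe that $\B^{\rop}$ is again an abelian category and that the resolving subcategory $\F\subset\B$ becomes a coresolving subcategory $\F^{\rop}\subset\B^{\rop}$: closure of $\F$ under kernels of epimorphisms, extensions, and direct summands in $\B$ translates into closure of $\F^{\rop}$ under cokernels of monomorphisms, extensions, and direct summands in $\B^{\rop}$, and the generating property of $\F$ becomes the cogenerating property demanded in the definition of a coresolving subcategory.

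Next I would invoke the canonical triangulated equivalences $\D(\F)^{\rop}\simeq\D(\F^{\rop})$ and $\D(\B)^{\rop}\simeq\D(\B^{\rop})$, under which the functor $\D(\F)\rarrow\D(\B)$ induced by the exact embedding corresponds to the analogous functor $\D(\F^{\rop})\rarrow\D(\B^{\rop})$. The key bookkeeping step is to track how the two full subcategories transform. Under the equivalence a complex $F^\bu$ in $\F$ is sent to the complex with terms $F^{-n}$ in $\F^{\rop}$, so the cohomological degree reverses sign and $H^n_\B(F^\bu)$ becomes $H^{-n}_{\B^{\rop}}$ of the image. Consequently the condition ``$F^n=0$ for all $n>0$'' defining $\D^{\le0}(\F)$ turns into ``$F^n=0$ for all $n<0$'' defining the coaisle $\D^{\ge0}(\F^{\rop})$, while ``$H^n_\B(F^\bu)=0$ for all $n<0$'' defining $\D_\B^{\ge0}(\F)$ turns into ``$H^n_{\B^{\rop}}=0$ for all $n>0$'' defining the aisle $\D_{\B^{\rop}}^{\le0}(\F^{\rop})$. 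In other words, $(\D^{\le0}(\F),\.\D_\B^{\ge0}(\F))$ is carried to the pair $(\D_{\B^{\rop}}^{\le0}(\F^{\rop}),\.\D^{\ge0}(\F^{\rop}))$, with aisle and coaisle interchanged exactly as a t\+structure should be under passage to the opposite category.

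With this dictionary in place, Proposition~\ref{coresolving-t-structure} applied to $(\B^{\rop},\F^{\rop})$ asserts precisely that $(\D_{\B^{\rop}}^{\le0}(\F^{\rop}),\.\D^{\ge0}(\F^{\rop}))$ is a t\+structure of the derived type on $\D(\F^{\rop})$ whose heart the functor $\D(\F^{\rop})\rarrow\D(\B^{\rop})$ identifies with $\B^{\rop}$. Transporting back along $\D(\F^{\rop})\simeq\D(\F)^{\rop}$ yields the desired t\+structure on $\D(\F)$ and shows that its heart is identified with $(\B^{\rop})^{\rop}=\B$ by $\D(\F)\rarrow\D(\B)$. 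The derived-type property is self-dual, since the comparison maps $\theta^i_{\B,\D(\F)}$ of~\eqref{ext-hom-comparison} correspond under the opposite to the maps $\theta^i_{\B^{\rop},\D(\F^{\rop})}$: an $\Ext$ group in $\B$ matches the $\Ext$ group in $\B^{\rop}$, and a $\Hom$ in the coaisle matches a $\Hom$ in the aisle.

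The step I expect to require the most care is the degree and aisle bookkeeping: one must be consistent about the sign reversal of cohomological degree under $\D(\F)^{\rop}\simeq\D(\F^{\rop})$ and about the convention by which a t\+structure $(\D^{\le0},\.\D^{\ge0})$ corresponds to the t\+structure $((\D^{\ge0})^{\rop},\.(\D^{\le0})^{\rop})$ on the opposite category. An alternative, if one prefers to avoid opposite categories, is to record explicit dual versions of Lemmas~\ref{lifting-t-structure-lemma} and~\ref{coresolving-subcategory-lemma} --- with a partially defined \emph{left} adjoint on the aisle $\D^{\le0}$, and with $\D^-(\F)\simeq\D^-(\B)$ for the resolving subcategory $\F$ --- and then repeat the proof of Proposition~\ref{coresolving-t-structure} verbatim; but the opposite-category route is shorter and less error-prone.
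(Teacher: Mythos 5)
Your proposal is correct and is essentially the paper's own proof: the paper disposes of this proposition with the single line ``Dual to Proposition~\ref{coresolving-t-structure},'' and your argument simply makes that dualization explicit, with the opposite-category dictionary (sign reversal of degrees, interchange of aisle and coaisle, self-duality of the derived-type condition) worked out correctly. No gaps; the extra bookkeeping you record is exactly what the word ``dual'' suppresses.
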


\begin{proof}
 Dual to Proposition~\ref{coresolving-t-structure}.
\end{proof}

 Now we are well-equipped for the discussion of $\infty$\+tilting and
$\infty$\+cotilting t\+structures.
 Let $\A$ be a complete, cocomplete abelian category with an injective
cogenerator $J$ and an $\infty$\+tilting pair $(T,\.\E)$, and let $\B$
be the corresponding complete, cocomplete abelian category with
a projective generator $P$ and an $\infty$\+cotilting pair $(W,\.\F)$,
as in Corollary~\ref{tilting-cotilting-correspondence-cor}.
Suppose further for convenience that $\E$, and hence also $\F$, are idempotent complete.
 Then the exact category $\E\simeq\F$ is simultaneously a coresolving
subcategory in $\A$ and a resolving subcategory in~$\B$.
{\hbadness=1100\par}

 Thus we have two t\+structures $(\D_\A^{\le0}(\E),\.\D^{\ge0}(\E))$
and $(\D^{\le0}(\F),\.\D_\B^{\ge0}(\F))$ on the unbounded derived
category $\D(\E)=\D=\D(\F)$.
 The hearts of these t\+structures are the abelian categories $\A$
and $\B$, respectively.

 Looking from the point of view of the category $\A$, the t\+structure
$(\D_\A^{\le0}(\E),\.\D^{\ge0}(\E))$ on the triangulated category $\D$
can be called the \emph{standard t\+structure}, and the t\+structure 
$(\D^{\le0}(\F),\.\D_\B^{\ge0}(\F))$ is
the \emph{$\infty$\+tilting t\+structure}.
 Looking from the point of view of the category $\B$, the t\+structure
$(\D^{\le0}(\F),\.\D_\B^{\ge0}(\F))$ on the triangulated category $\D$
is the \emph{standard t\+structure}, and the t\+structure
$(\D_\A^{\le0}(\E),\.\D^{\ge0}(\E))$ is
the \emph{$\infty$\+cotilting t\+structure}.
 The abelian category $\B$ is the \emph{$\infty$\+tilting heart}, and
the abelian category $\A$ is the \emph{$\infty$\+cotilting heart}.

\Section{Examples}
\label{examples-secn}

\begin{ex} \label{n-co-tilting-ex}
 Let $\A$ be a complete, cocomplete abelian category with an injective
cogenerator $J$ and an $\infty$\+tilting object $T\in\A$, and let $\B$
be the corresponding complete, cocomplete abelian category with
a projective generator $P$ and an $\infty$\+cotilting object $W\in\B$,
as in Corollary~\ref{tilting-cotilting-cor}.
 In this context, if \emph{both} the projective dimension of
the $\infty$\+tilting object $T\in\A$ and the injective dimension of
the $\infty$\+cotilting object $W\in\B$ are finite, then
they are equal to each other, $\pd_\A T=n=\id_\B W$.
 Furthermore, this holds if and only if the object $T\in\A$ is $n$\+tilting
if and only if the object $W\in\B$ is $n$\+cotilting (both in the sense
of~\cite[Sections~2 and~4]{PS}).

 Indeed, suppose that $\pd_\A T<\infty$ and $\id_\B W<\infty$ and
denote by $n$ the maximum of the two values.
 Then the left exact functor $\Psi\:\A\rarrow\B$ has finite homological
dimension, since it can be computed as the functor $\Hom_\A(T,{-})$;
and the right exact functor $\Phi\:\B\rarrow\A$ has finite homological
dimension, since it can be computed as the functor
$\Hom_\B({-},W)^\sop$.
 Denote by $\E_T\subset\A$ the full subcategory of all objects $E\in\A$
such that $\Ext_\A^i(T,E)=0$ for all $i>0$, and by $\F_W\subset\B$
the full subcategory of all objects $F\in\B$ such that
$\Ext_\B^i(F,W)=0$ for all $i>0$.
 (By the definition, we have $\E_\mx(T)\subset\E_T$ and
$\F_\mx(W)\subset\F_W$.)

 Then the functor $\Psi$ is exact on the exact category $\E_T$ and
the functor $\Phi$ is exact on the exact category~$\F_W$.
 The full subcategory $\E_T$ is coresolving in $\A$, and the full
subcategory $\F_W$ is resolving in $\B$, with both the (co)resolution
dimensions bounded by the finite constant $n$. The latter fact is due to
the observation that, thanks to a simple dimension shifting argument,
any $n$-th cosyzygy object in $\A$ belongs to $\E_T$
and any $n$-th syzygy object in $\B$ belongs to $\F_W$.

 Let us show that the functors $\Phi$ and $\Psi$ restrict to mutually
inverse equivalences between the exact categories $\E_T$ and $\F_W$.
 Given an object $E\in\E_T$, choose an exact sequence $0\rarrow E
\rarrow J^0\rarrow\dotsb\rarrow J^{d-1}\rarrow E'\rarrow0$ in $\A$
with $J^i\in\A_\inj$ with $d\ge\operatorname{max}(n,2)$.
 Then the sequence $0\rarrow\Psi(E)\rarrow\Psi(J^0)\rarrow\dotsb
\rarrow\Psi(J^{d-1})\rarrow\Psi(E')\rarrow0$ is exact in~$\B$, and
the objects $\Psi(J^i)$ belong to the full subcategory
$\Prod(W)\subset\F_W\subset\B$.
 Hence $\Psi(E)\in\F_W$ by dimension shifting.

 Furthermore, we have $E'\in\E_T$, hence $\Psi(E')\in\F_W$.
 It follows that the sequence $0\rarrow\Phi\Psi(E)\rarrow\Phi\Psi(J^0)
\rarrow\dotsb\rarrow\Phi\Psi(J^{d-1})\rarrow\Phi\Psi(E')\rarrow0$ is
exact in~$\A$.
 Since the adjunction morphisms $\Phi\Psi(J^i)\rarrow J^i$ are
isomorphisms for $i=0$ and~$1$, so is the adjunction morphism
$\Phi\Psi(E)\rarrow E$.
 Similarly one shows that $\Phi(F)\in\E_T$ for all $F\in\F_W$, and
the adjunction morphism $F\rarrow\Psi\Phi(F)$ is an isomorphism.

 According to~\cite[Lemmas 5.4.1 and 5.4.2]{BvdB},
\cite[Proposition~1.5]{FMS} or~\cite[Theorem~5.5]{PS}
and the references therein, the triangulated functors $\D(\E_T)\rarrow
\D(\A)$ and $\D(\F_W)\rarrow\D(\B)$ induced by the exact embedding
functors $\E_T\rarrow\A$ and $\F_W\rarrow\B$ are equivalences of
triangulated categories.
 Thus we obtain a triangulated equivalence
$$
 \D(\A)\simeq\D(\E_T)=\D(\F_W)\simeq\D(\B).
$$
 Applying, e.~g., \cite[Proposition~2.5 and Corollary~4.4(b)]{PS},
one can conclude that the conditions~(i\+iii) and~(i*-iii*)
of~\cite[Sections~2 and~4]{PS} hold for $T$ and~$W$, respectively.
That is, $T$ is $n$-tilting, $W$ is $n$-cotilting and, moreover,
$\pd_\A T=n=\id_\B W$ by~\cite[Corollary~4.12]{PS}.

 Following~\cite[Lemma~5.1]{PS}, the two conditions~(i$_\mx$)
and~(ii$_\mx$) defining the full subcategory $\E_\mx(T)\subset\A$
are equivalent in this case.
 Similarly, the two conditions~(i$_\mx^{\textstyle*}$)
and~(ii$_\mx^{\textstyle*}$) defining the full subcategory
$\F_\mx(W)\subset\B$ are equivalent.
 So either one of the two conditions is sufficient to define
these classes in the $n$\+(co)tilting case, and we actually have
$\E_\mx(T)=\E_T$ and $\F_\mx(W)=\F_W$.
 It is only in the $\infty$\+(co)tilting situation that we need to
impose both the conditions.
 The full subcategory $\E=\E_\mx(T)$ is the \emph{$n$\+tilting class}
of an $n$\+tilting object $T$, and the full subcategory $\F=\F_\mx(W)$
is the \emph{$n$\+cotilting class} of an $n$\+cotilting object~$W$,
as discussed in~\cite[Sections~3\+-4]{PS}.
 According to~\cite[Lemma~5.3 and Remark~5.4]{PS}, both the full
subcategories $\E$ and $\F$ are closed under both the infinite
products and coproducts in $\A$ and~$\B$.

 Finally, note that if $T$ is $n$\+tilting, then $W$ is $n$\+cotilting 
and vice versa by~\cite[Corollary~4.12]{PS}.
Thus, both the projective dimension of $T$ and
the injective dimension of $W$ need to be finite for either
of the two objects to be $n$\+(co)tilting.
\end{ex}

\begin{ex}
 Let $\A$ be a complete, cocomplete abelian category with an injective
cogenerator $J$ and an $\infty$\+tilting pair $(T,\.\E)$, and let $\B$ be
the corresponding complete, cocomplete abelian category with
a projective generator $P$ and an $\infty$\+cotilting pair $(W,\.\F)$, as
in Corollary~\ref{tilting-cotilting-correspondence-cor}.
 Suppose that the full subcategory $\E\subset\A$ is closed under
coproducts and the full subcategory $\F\subset\B$ is closed
under products.
 Then, by Proposition~\ref{co-products-closed-exact-subcategory},
the triangulated functors $\D(\E)\rarrow\D(\A)$ and $\D(\F)\rarrow
\D(\B)$ induced by the exact embeddings $\E\rarrow\A$ and
$\F\rarrow\B$ are Verdier quotient functors.

 Assume that only \emph{one} of the objects $T$ and $W$ has finite
homological dimension, or more specifically, that $\pd_\A T<\infty$.
 Then the left exact functor $\Psi\:\A\rarrow\B$ has finite homological
dimension and the full subcategory
$\E_T = \{E\in\A\mid\Ext_\A^i(T,E)=0\ \forall\, i>0\}$
of $\A$ has finite coresolution dimension, as in the previous example.
 In particular, the complex $\Psi(E^\bu)$ is acyclic in $\B$ for any
complex $E^\bu$ in the category $\E$ that is acyclic in~$\A$.
 So the composition of triangulated functors $\D(\E)\simeq\D(\F)
\rarrow\D(\B)$ factorizes through the Verdier quotient functor
$\D(\E)\rarrow\D(\A)$, or in other words, the triangulated equivalence
$\D(\E)\simeq\D(\F)$ descends to a triangulated functor
$\D(\A)\rarrow\D(\B)$ in Figure~\ref{equivalences-coderived-fig}.
 This is also a Verdier quotient functor (since such is the functor
$\D(\F)\rarrow\D(\B)$).

 Similarly, assume that $\id_\B W<\infty$.
 Then the right exact functor $\Phi\:\B\rarrow\A$ has finite
homological dimension.
 In particular, the complex $\Phi(F^\bu)$ is acyclic in $\A$ for any
complex $F^\bu$ in the category $\F$ that is acyclic in~$\B$.
 Hence the triangulated equivalence $\D(\F)\simeq\D(\E)$ descends
to a triangulated Verdier quotient functor $\D(\B)\rarrow\D(\A)$.

In the representation theory of finite-dimensional algebras, it is
an open problem whether a finite-dimensional $\infty$\+tilting module
of finite projective dimension is already $n$\+tilting for some~$n$.
It goes under the name of the Wakamatsu tilting conjecture, and
it is a member of a family of long standing so-called
homological conjectures for finite-dimensional algebras
\cite[Section~4]{MR}, \cite[\S IV.3]{BR}.
\end{ex}

\begin{ex} \label{locally-noetherian-ex1}
 Let $\A$ be a locally Noetherian Grothendieck abelian category
(cf.~\cite[Section~10.2]{PS}).
 Choose an injective object $J\in\A$ such that $\A_\inj=\Add(J)$;
then it follows that $J$ is an injective cogenerator of $\A$, and
one also has $\A_\inj=\Prod(J)$.
 Set $T=J$ and $\E=\A_\inj\subset\A$.
 Then $(T,\.\E)$ is an $\infty$\+tilting pair in~$\A$.

 In the corresponding abelian category $\B$ with a natural
projective generator $P$ \cite[Theorem~3.6]{Prev}, one has
$\B_\proj=\Add(P)=\Prod(P)$ (see also Lemma~\ref{co-products-preservation}).
 The related $\infty$\+cotilting pair in $\B$ is $(W,\.\F)$, where
$W=P$ and $\F=\B_\proj$.
 So both the full subcategories $\E\subset\A$ and $\F\subset\B$
are closed under both the products and coproducts.
 As always in the context of
Corollary~\ref{tilting-cotilting-correspondence-cor},
one has an equivalence of additive/exact categories $\E\simeq\F$.

 The derived category $\D(\E)$ is simply the homotopy category
$\Hot(\A_\inj)$; it is equivalent to the coderived category
$\D^\co(\A)$ (see the argument for~\cite[Theorem~2.4]{PcohSGdual}).
 The derived category $\D(\F)$ is simply the homotopy category
$\Hot(\B_\proj)$; it is equivalent to the contraderived category
$\D^\ctr(\B)$ (cf.~\cite[Theorem~4.4(b)]{PcohSGdual}
and~\cite[Corollary~A.6.2]{Pcosh}).
 Hence the derived equivalence
$$
 \D^\co(\A)\simeq\Hot(\A_\inj)=\Hot(\B_\proj)\simeq\D^\ctr(\B).
$$

 These are the \emph{minimal} $\infty$\+tilting and
$\infty$\+cotilting pair for the $\infty$\+tilting object $T\in\A$
and the $\infty$\+cotilting object $W\in\B$, in the sense of
Example~\ref{minimal-pairs-ex}: one has $\E_\mn(T)=\E=\A_\inj$
and $\F_\mn(W)=\F=\B_\proj$.
\end{ex}

\begin{ex} \label{locally-noetherian-ex2}
 In the context of the previous example, it is also instructive to
consider the \emph{maximal} $\infty$\+tilting pair $(T,\.\E_\mx(T))$
for the $\infty$\+tilting object $T=J$ in the category $\A$ and
the maximal $\infty$\+cotilting pair $(W,\.\F_\mx(W))$ for
the $\infty$\+cotilting object $W=P$ in the category~$\B$.

 The full subcategory\ $\E_\mx(T)\subset\A$ consists of all
the objects $E\in\A$ for which there exists an unbounded acyclic
complex of injective objects
$$
 \dotsb\lrarrow J^{-2}\lrarrow J^{-1}\lrarrow J^0\lrarrow J^1
 \lrarrow J^2\lrarrow\dotsb
$$
such that the complex $\Hom_\A(J,J^\bu)$ is acyclic and $E$ is
the image of the morphism $J^{-1}\rarrow J^0$.
 This is known as the full subcategory of \emph{Gorenstein injective
objects} in the abelian category~$\A$.

 Similarly, the full subcategory $\F_\mx(W)\subset\B$ consists of all
the objects $F\in\B$ for which there exists an unbounded acyclic
complex of projective objects
$$
 \dotsb\lrarrow P_2\lrarrow P_1\lrarrow P_0\lrarrow P_{-1}
 \lrarrow P_{-2}\lrarrow\dotsb
$$
such that the complex $\Hom_\B(P_\bu,P)$ is acyclic and $F$ is
the image of the morphism $P_0\rarrow P_{-1}$.
 This is known as the full subcategory of \emph{Gorenstein projective
objects} in the abelian category~$\B$ (cf.~\cite[Definition~3.7]{En}).

 Hence we can conclude from
Theorems~\ref{tilting-cotilting}\+-\ref{cotilting-tilting}
that the exact categories of Gorenstein injective objects in $\A$
and Gorenstein projective objects in $\B$ are naturally equivalent.

 If $\A$ has a generating set of objects of finite projective dimension,
then, by~\cite[Theorem 5.7]{Gil} or~\cite[Lemma 7.2]{St-coder},
the class of acyclic complexes of injectives is closed under products
(although products may not be exact in~$\A$).
In particular, $\E_\mx(T)\subset\A$ is closed under products, and so
is $\F_\mx(W)\subset\B$ by Lemma~\ref{co-products-preservation}.
 Dually, if $\B$ has a cogenerating set of objects of finite injective
dimension, then both $\E_\mx(T)\subset\A$ and $\F_\mx(W)\subset\B$
are closed under coproducts.

 In particular, if $\A$ is the category of quasi-coherent sheaves on
a quasi-compact semi-separated scheme $X$, then any quasi-coherent
sheaf on $X$ is the quotient of one of the so-called very flat
quasi-coherent sheaves~\cite[Lemma~4.1.1]{Pcosh}
(see~\cite[Section~2.4]{M-n} or~\cite[Lemma~A.1]{EP} for the more
widely known, but weaker assertion with flat sheaves in place of
the very flat ones).
 If $X$ is covered by $n$~affine open subschemes, then the projective
dimension of any very flat quasi-coherent sheaf, as an object of $\A$,
does not exceed~$n$, as one can show using a \v Cech resolution for
the affine covering, together with the fact that the projective
dimension of a very flat module does not exceed~$1$
(cf.~\cite[properties~(VF5) and~(VF6)]{PSl}).
 Thus the class of acyclic complexes of injectives is closed under
products in~$\A$.
 If $X$ is also Noetherian, then $\A$ is a locally Noetherian category,
and the discussion in the previous paragraph applies.
\end{ex}

\begin{ex} \label{locally-noetherian-gorenstein}
 In the context of Examples~\ref{locally-noetherian-ex1}\+-%
\ref{locally-noetherian-ex2}, one can say that a locally Noetherian
Grothendieck abelian category $\A$ is \emph{$n$\+Gorenstein} if
the $\infty$\+tilting object $T=J$ is $n$\+tilting.
 This means that $\pd_\A T=\id_\B W\le n$ (cf.~\cite[Theorem~10.3]{PS}).

 In this case, we have the minimal $\infty$\+tilting and
$\infty$\+cotilting pair $(T,\.\E_\mn(T))$ and $(W,\.\F_\mn(W))$
with $\E_\mn(T)=\A_\inj$ and $\F_\mn(W)=\B_\proj$, as in
Example~\ref{locally-noetherian-ex1}.
 We also have the maximal $\infty$\+tilting and
$\infty$\+cotilting pair $(T,\.\E_\mx(T))$ and $(W,\.\F_\mx(W))$
with $\E_\mx(T)=\E_T$ being the $n$\+tilting class of the $n$\+tilting
object $T\in\A$ (consisting of all the Gorenstein injectives
in~$\A$) and $\F_\mx(W)=\F_W$ being the $n$\+cotilting class of
the $n$\+cotilting object $W\in\B$ (consisting of all
the Gorenstein projectives in~$\B$), as in
Examples~\ref{n-co-tilting-ex} and~\ref{locally-noetherian-ex2}.

 The two related derived equivalences (as in
Section~\ref{derived-equivalence-secn}) form a commutative diagram
with the natural Verdier quotient functors
$$
\begin{diagram}
\node{\D^\co(\A)}\arrow{s,A}\arrow[2]{e,=}
\node[2]{\Hot(\A_\inj)}\arrow{s,A}\arrow[2]{e,=}
\node[2]{\Hot(\B_\proj)}\arrow{s,A}\arrow[2]{e,=}
\node[2]{\D^\ctr(\B)}\arrow{s,A} \\
\node{\D(\A)}\arrow[2]{e,=}\node[2]{\D(\E_T)}\arrow[2]{e,=}
\node[2]{\D(\F_W)}\arrow[2]{e,=}\node[2]{\D(\B)}
\end{diagram}
$$
\end{ex}

\begin{ex}
 Let $A$ and $B$ be associative rings, and let $C$ be
an $A$\+$B$\+bimodule.
 One says that $C$ is a \emph{semidualizing bimodule} (in
the terminology of~\cite{HW}) or a \emph{pseudo-dualizing
bimodule} (in the terminology of~\cite{Pps}, which we adopt here) for
the rings $A$ and $B$ if the following conditions are satisfied:
\begin{itemize}
\item the left $A$\+module $C$ has a projective resolution by
finitely generated projective left $A$\+modules, and the right
$B$\+module $C$ has a projective resolution by finitely generated
projective right $B$\+modules;
\item the homothety maps $A\rarrow\Ext_{B^\rop}^*(C,C)$ and
$B^\rop\rarrow\Ext_A^*(C,C)$ are isomorphisms of graded rings
(where $B^\rop$ denotes the opposite ring to~$B$).
\end{itemize}
This definition is (essentially) obtained by dropping the finite
injective dimension condition in the definition of a dualizing
module over a pair of associative rings.

 Let $C$ be a pseudo-dualizing $A$\+$B$\+bimodule.
 Set $\A=A\modl$ and $\B=B\modl$ to be the abelian categories of 
left modules over the rings $A$ and~$B$.
 Then $T=C$ is a (finitely generated) $\infty$\+tilting object
in~$\A$.
 The related maximal $\infty$\+tilting class $\E_\mx(T)\subset\A$
is known as the \emph{Bass class} \cite[Theorem~6.1]{HW}, and
it contains the injective left $A$\+modules by~\cite[Lemma~4.1]{HW}.

 The corresponding tilted abelian category is $\sigma_T(\A)=\B$, and
its natural projective generator is $P=B$.
 Choosing $J=\Hom_\boZ(A,\boQ/\boZ)$ as the injective cogenerator
of $\A$, the corresponding $\infty$\+cotilting object in $\B$
is $W=\Hom_\boZ(C,\boQ/\boZ)$.
 The related maximal $\infty$\+cotilting class $\F_\mx(W)\subset\B$
is known as the \emph{Auslander class} \cite[Theorem~2]{HW}.
 The objects of the full subcategory $\Add(T)\subset\A$ are called
\emph{$C$\+projectives} in~\cite{HW}, and the objects of the full
subcategory $\Prod(W)\subset\B$ are called \emph{$C$\+injectives}.
 The equivalence of exact categories $\E_\mx(T)\simeq\F_\mx(W)$
is a part of what is known as the \emph{Foxby
equivalence}~\cite[Theorem~1 or Proposition~4.1]{HW}.

 Both the full subcategories $\E_\mx(T)\subset\A$ and $\F_\mx(W)
\subset\B$ are closed under both the infinite products and
coproducts~\cite[Proposition~4.2]{HW}, so the results of our
Section~\ref{derived-equivalence-secn} apply and provide a commutative
diagram of a triangulated equivalence and Verdier quotient functors
$$
\begin{diagram}
\node{\D^\co(A\modl)}\arrow{s,A}\node[2]{\D^\ctr(B\modl)}\arrow{s,A} \\
\node{\D(\E_\mx(T))}\arrow{s,A}\arrow[2]{e,=}\node[2]{\D(\F_\mx(W))}
\arrow{s,A} \\ \node{\D(A\modl)}\node[2]{\D(B\modl)}
\end{diagram}
$$

 The paper~\cite{Pps} is devoted to generalizing this theory to
the case of a pseudo-dualizing \emph{complex} of bimodules.
 In particular, (a coproduct and product-closed version of)
the \emph{minimal} $\infty$\+tilting and $\infty$\+cotilting
classes for $T$ and $W$ is discussed in~\cite[Section~5]{Pps}.
\end{ex}

\begin{ex} \label{coring-example}
 Let $\cC$ be a coassociative, counital coring over an associative
ring~$A$ (see~\cite[Section~10.3]{PS}).
 Assume that $\cC$ is a projective left and a flat right $A$\+module.
 Let $\A=\cC\comodl$ be the category of left $\cC$\+comodules; it is
a Grothendieck abelian category.
 Set $T\in\A$ to be the cofree left $\cC$\+comodule $T=\cC$.
 We claim that $T$ is an $\infty$\+tilting object in~$\A$.

 Indeed, it was explained in~\cite[Section~10.3]{PS} that $T$ is
weakly tilting, so it remains to show that the injective objects of $\A$
satisfy the condition~(ii$_\mx$).
 A left $\cC$\+comodule is injective if and only if it is a direct
summand of a $\cC$\+comodule $\cC\ot_A I$ coinduced from an injective
left $A$\+module~$I$ \cite[Sections~1.1.2 and~5.1.5]{Psemi}.
 Now applying the coinduction functor $\cC\ot_A{-}$ to a projective
resolution of the $A$\+module $I$ produces an $\Add(T)$\+resolution
of the $\cC$\+comodule $\cC\ot_A I$ as in~(ii$_\mx$).
 This resolution remains exact after applying the functor
$\Hom_\A(T,{-})$, because $\Hom_\cC(\cC,\>\cC\ot_AV)\simeq
\Hom_A(\cC,V)$ and $\cC$ is a projective left $A$\+module.

 The abelian category $\B=\sigma_T(\A)$ is the category of
left $\cC$\+contramodules, $\B=\cC\contra$ \cite[Section~10.3]{PS}.
 The natural projective generator is $P=\Hom_\cC(\cC,\cC)=
\Hom_A(\cC,A)$.
 Given an injective cogenerator $I$ of the category of left
$A$\+modules, one can choose $J=\cC\ot_A I$ as the injective
cogenerator of $\A=\cC\comodl$; then the related cotilting object
in $\B=\cC\contra$ is $W=\Hom_\cC(\cC,\>\cC\ot_AI)=\Hom_A(\cC,I)$.

 When the left homological dimension of the ring $A$ is finite,
we can describe the minimal class $\E_\mn(T)$ which forms
an $\infty$\+tilting pair with $T$ (Example~\ref{minimal-pairs-ex})
more explicitly as the  full subcategory $\E\subset\A$ of all
$\cC/A$\+injective left $\cC$\+comodules~\cite[Sections~5.1.4
and~5.3]{Psemi}, \cite[Section~3.4]{Prev}.
 Here, a left $\cC$\+comodule $\cM$ is called \emph{$\cC/A$\+injective}
if $\Ext^i_\cC(\cL,\cM) = 0$ for all $i>0$ and all left $\cC$\+comodules
$\cL$ with projective underlying left $A$\+modules.
 The class $\E$ is coresolving and contains all the coinduced
$\cC$\+comodules $\cC\otimes_AM$, \ $M\in A\modl$.
 In particular, $\Add(T)\subset \E$, objects of $\Add(T)$ are by
definition projective in $\E$, and by~\cite[Lemma~5.2(a) and
proof of Lemma~5.3.2(a)]{Psemi}, there are enough such projectives.
 On the other hand, $\E$ has enough injectives, $\E_\inj = \A_\inj$,
and the proof of~\cite[Theorem~5.3]{Psemi} reveals that any object
of $\E$ has finite injective dimension bounded by the left homological
dimension of~$A$. Now we can use the following observation.

\begin{lem}
 Let $(T,\E)$ be an $\infty$\+tilting pair in a complete, cocomplete
abelian category\/ $\A$ with an injective cogenerator.
 If\/ $\E$ has finite homological dimension as an exact category,
then\/ $\E = \E_\mn(T)$.
\end{lem}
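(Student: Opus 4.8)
The plan is to establish the nontrivial inclusion $\E\subset\E_\mn(T)$, since the reverse inclusion $\E_\mn(T)\subset\E$ holds for every $\infty$\+tilting pair and is already recorded in Example~\ref{minimal-pairs-ex}. The guiding observation is that finite homological dimension of $\E$ forces every object to admit a \emph{finite} $\Add(T)$\+projective resolution inside $\E$, and that the single building operation ``cokernel of a monomorphism'' from the description of $\E_\mn(T)$ in Example~\ref{minimal-pairs-ex} is enough to walk such a resolution back to the object it resolves.

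First I would reduce the hypothesis to a bound on projective dimensions. Since $\E$ is an exact category with enough projectives and $\E_\proj=\Add(T)$, the assumption of finite homological dimension means that there is an integer $n$ with $\Ext^i_\E(X,Y)=0$ for all $X$, $Y\in\E$ and all $i>n$; hence $\pd_\E X\le n$ for every $X\in\E$. Next, given $E\in\E$, I would build an $\Add(T)$\+resolution: an $\Add(T)$\+precover $T_0\rarrow E$ is an epimorphism with kernel $Z_0\in\E$ by condition~(v) of an $\infty$\+tilting pair, and iterating (choosing $\Add(T)$\+precovers $T_{k+1}\rarrow Z_k$ with kernels $Z_{k+1}\in\E$, all of which remain epimorphisms as explained after Lemma~\ref{wakamatsu-lemma}) produces syzygies with $\pd_\E Z_k\le n-1-k$ by dimension shifting. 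In particular $Z_{n-1}$ is projective in $\E$, so $Z_{n-1}\in\E_\proj=\Add(T)$, and the resolution terminates as an exact sequence
$$
 0\lrarrow Z_{n-1}\lrarrow T_{n-1}\lrarrow\dotsb\lrarrow T_0\lrarrow E\lrarrow0
$$
with $Z_{n-1}$ and all $T_k$ lying in $\Add(T)$.

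Finally, I would run a downward induction along this resolution. Setting $Z_{-1}=E$, each short exact sequence $0\rarrow Z_k\rarrow T_k\rarrow Z_{k-1}\rarrow0$ exhibits $Z_{k-1}$ as the cokernel of the monomorphism $Z_k\rarrow T_k$. Starting from $Z_{n-1}\in\Add(T)\subset\E_\mn(T)$ and using $T_k\in\Add(T)\subset\E_\mn(T)$ at every step, closure of $\E_\mn(T)$ under cokernels of monomorphisms (Example~\ref{minimal-pairs-ex}) gives $Z_{k-1}\in\E_\mn(T)$; after $n$ steps one reaches $E=Z_{-1}\in\E_\mn(T)$.

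The main point to watch is the termination of the resolution, namely that the $n$\+th syzygy is genuinely an object of $\Add(T)$ and not merely of $\E$; this rests on the identification $\E_\proj=\Add(T)$ together with the projective-dimension bound, and everything else is a routine dimension-shift followed by iterated application of a single closure operation. I would also note, as a clarifying remark, that only finite projective (rather than injective) dimension and only one of the three generating operations of $\E_\mn(T)$ are actually needed for the argument.
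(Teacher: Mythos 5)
Your argument is correct, but it runs dual to the proof the paper actually gives, and the two use different halves of the hypothesis. The paper resolves on the injective side: since $\E_\inj=\A_\inj$ (condition~(i) of Section~\ref{infty-til-cotil-pairs-secn}) and the homological dimension of $\E$ is finite, every $E\in\E$ admits a finite coresolution $0\rarrow E\rarrow J^0\rarrow\dotsb\rarrow J^n\rarrow 0$, exact in $\E$, with all $J^k\in\A_\inj$; this sequence stays exact under $\Hom_\A(T,{-})$, and one walks it down from $J^n$ to $E$ using that $\E_\mn(T)$ contains $\A_\inj$ and is closed under kernels of epimorphisms that remain surjective under $\Hom_\A(T,{-})$ --- a closure property extracted from conditions~(iv) and~(v) exactly as in the proof of Proposition~\ref{tilting-cotilting-pairs-prop} (compare the kernels of an $\Add(T)$\+precover of the middle term and of its composite with the epimorphism). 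You resolve on the projective side instead: finite homological dimension bounds $\pd_\E$, the precover resolution terminates because the $(n-1)$\+st syzygy is projective in $\E$ and hence lies in $\E_\proj=\Add(T)$, and the walk back up needs only condition~(ii) and closure under cokernels of monomorphisms from condition~(iv). Both arguments are sound; the one delicate point in yours is the termination step, and it holds because $\Ext^1_\E(Z_{n-1},{-})=0$ splits the next conflation, exhibiting $Z_{n-1}$ as a direct summand of an object of $\Add(T)$, which is closed under direct summands. As for what each route buys: yours shows that finite projective dimension of the objects of $\E$ already suffices and invokes only one of the three generating operations of $\E_\mn(T)$ from Example~\ref{minimal-pairs-ex}, whereas the paper's shows that finite injective dimension suffices --- which is precisely the form in which the hypothesis is verified in the intended application, since in Example~\ref{coring-example} the objects of $\E$ are shown to have finite injective dimension bounded by the left homological dimension of the ring~$A$.
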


\begin{proof}
 If $n$~is the homological dimension of $\E$, then any object $E\in\E$
admits a long exact sequence
\[ 0 \lrarrow E \lrarrow J^0 \lrarrow \dotsb \lrarrow J^n \lrarrow 0 \]
in $\E$ with $J^0$,~\dots, $J^n\in\E_\inj=\A_\inj$.
 Since this sequence remains exact after applying $\Hom_\A(T,{-})$,
it follows that $E\in\E_\mn(T)$ by the conditions~(i) and~(v) from
Section~\ref{infty-til-cotil-pairs-secn}.
\end{proof}

 Dually, the full subcategory $\F\subset\B$ in the related $\infty$\+cotilting
pair $(W,\.\F)$ consists of all the $\cC/A$\+projective left
$\cC$\+contramodules.
 This is analogously the minimal $\infty$\+cotilting pair
for the $\infty$\+cotilting
object $W\in\B$.
 Since the class of $\cC/A$-injective comodules is closed under products and the class of $\cC/A$-projective contramodules is closed under coproducts, 
 both the full subcategories $\E\subset\A$ and $\F\subset\B$ are closed under both the infinite products and coproducts
 by Lemma~\ref{co-products-preservation}.
 The related derived equivalence is~\cite[Section~5.4]{Psemi}
$$
 \D^\co(\cC\comodl)\simeq\D(\E)=\D(\F)\simeq\D^\ctr(\cC\contra).
$$

 For comparison, when $\cC$ is a left Gorenstein coring
in the sense of~\cite[Section~10.3]{PS}, i.e.\ $T\in\A$
is an $n$\+tilting object, considering the corresponding tilting and
cotilting classes $\E_\mx(T)\subset\A$ and $\F_\mx(W)\subset\B$
produces a triangulated equivalence between the conventional derived
categories, $\D(\cC\comodl)\simeq\D(\cC\contra)$.
\end{ex}

\begin{ex}
 The case of a coassociative coalgebra $\cC$ over a field~$k$
is a common particular case of
Examples~\ref{locally-noetherian-ex1}\+-\ref{locally-noetherian-ex2}
and Example~\ref{coring-example}.
 It is also a particular case of the next
Example~\ref{semialgebra-example}.

 In this case, one has $\A=\cC\comodl$ and $\B=\cC\contra$.
 The $\infty$\+tilting object $T=\cC=J$ is the natural injective
cogenerator of the locally Noetherian Grothendieck abelian
category $\A$, and the $\infty$\+cotilting object $W=\cC^*=
\Hom_k(\cC,k)=P$ is the natural projective generator of
the abelian category~$\B$.

 When $\cC$ is a Gorenstein coalgebra, we are in the situation
of Example~\ref{locally-noetherian-gorenstein}
(see~\cite[Section~10.1]{PS}).
\end{ex}

\begin{ex} \label{semialgebra-example}
 Let $\bcS$ be a semiassociative, semiunital semialgebra over
a coassociative, counital coalgebra $\cC$ over a field~$k$
(see~\cite[Section~10.3]{PS}).
 Assume that $\bcS$ is an injective left and right $\cC$\+comodule.
 Let $\A=\bcS\simodl$ be the category of left $\bcS$\+semimodules;
it is a Grothendieck abelian category.
 Set $T\in\A$ to be the semifree left $\bcS$\+semimodule $T=\bcS$,
and take $\E\subset\A$ to be the full subcategory of all left
$\bcS$\+semimodules whose underlying left $\cC$\+comodules are
injective, $\E=\bcS\simodl_{\cC\dinj}$.
 Then $(T,\.\E)$ is an $\infty$\+tilting pair in~$\A$.

 The related abelian category $\B=\sigma_T(\A)$ is the category of
left $\bcS$\+semicontra\-modules, $\B=\bcS\sicntr$
\cite[Section~10.3]{PS}.
 The natural projective generator is $P=\Hom_\bcS(\bcS,\bcS)\in\bcS\sicntr$.
 The full subcategory $\F=\Psi(\E)\subset\B$ consists of all left
$\bcS$\+semicontramodules whose underlying left $\cC$\+contramodules
are projective, $\F=\bcS\sicntr_{\cC\dproj}$.
 The $\infty$\+cotilting object $W\in\B$ corresponding to the natural
choice of an injective cogenerator $J\in\A$ is
$W=\bcS^*=\Hom_k(\bcS,k)\in\bcS\sicntr$.
 Both the full subcategories $\E\subset\A$ and $\F\subset\B$ are
closed under both the products and coproducts.
 A detailed discussion of the equivalence of exact categories
$\bcS\simodl_{\cC\dinj}\simeq\bcS\sicntr_{\cC\dproj}$ can be found
in~\cite[Section~3.5]{Prev}.

 The derived category $\D(\E)$ of the exact category $\E$ is called
the \emph{semiderived category of left\/ $\bcS$\+semimodules} and
denoted by $\D(\bcS\simodl_{\cC\dinj})=\D^\si(\bcS\simodl)$
\cite[Section~0.3.3]{Psemi}.
 Generally speaking, it is properly intermediate between the coderived
category $\D^\co(\bcS\simodl)$ and the derived category
$\D(\bcS\simodl)$.
 Similarly, the derived category $\D(\F)$ of the exact category $\F$
is called the \emph{semiderived category of left\/
$\bcS$\+semicontramodules} and denoted by
$\D(\bcS\sicntr_{\cC\dproj})=\D^\si(\bcS\sicntr)$
\cite[Section~0.3.6]{Psemi}.
 Generally speaking, it is properly intermediate between
the contraderived category $\D^\ctr(\bcS\sicntr)$ and the derived
category $\D(\bcS\sicntr)$.

 The triangulated equivalence $\D^\si(\bcS\simodl)\simeq
\D^\si(\bcS\sicntr)$ is called the \emph{derived
semimodule-semicontramodule correspondence} \cite[Sections~0.3.7
and~6.3]{Psemi}.
 For an application to representation theory of infinite-dimensional
Lie algebras (such as the Virasoro or Kac--Moody algebras),
see~\cite[Corollary~D.3.1]{Psemi}.
\end{ex}

\bigskip

\end{document}